\documentclass[12pt,twoside]{article}
\usepackage[hmargin=0.8in,vmargin=0.9in]{geometry}
\usepackage{fancyhdr}
\usepackage{graphicx}
\usepackage{amssymb}
\usepackage{amsmath}
\usepackage{amsthm}
\usepackage{amsfonts}
\usepackage{mathrsfs}
\usepackage{mathtools}
\usepackage{bm}
\usepackage{color}
\usepackage{setspace}
\usepackage{exscale}
\usepackage{relsize}
\usepackage{float}
\usepackage{cite}
\usepackage{hyperref}
\usepackage{cleveref}
\usepackage{nicefrac}
\usepackage{placeins}
\usepackage{booktabs}
\DeclareMathAlphabet{\mathbbold}{U}{bbold}{m}{n}

\usepackage{booktabs,multirow} 
\usepackage{array} 
\usepackage{paralist} 

\usepackage{sectsty}

\sectionfont{\fontsize{14}{15}\selectfont}

\theoremstyle{plain}                    
\newtheorem{thm}{Theorem}[section]

\newtheorem{rmk}[thm]{Remark}

\usepackage{tikz}
\usetikzlibrary{positioning}
\usepackage{xcolor}

\allowdisplaybreaks[1]

\numberwithin{equation}{section}
\numberwithin{figure}{section}
\numberwithin{table}{section}

\newcommand*\xbar[1]{%
	\hbox{%
		\vbox{%
			\hrule height 0.5pt 
			\kern0.4ex
			\hbox{%
				\kern-0.05em
				\ensuremath{#1}%
				\kern-0.00em
			}%
		}%
	}%
}

\usepackage[utf8]{inputenc}
\usepackage[english]{babel}
\usepackage{empheq}
\usepackage{subcaption} 
\usepackage{epsfig}
\usepackage{algorithm}
\usepackage{algpseudocode}
\usepackage{bbm}
\usepackage{sectsty}

\newcommand{\bb}[1]{{\boldsymbol{#1}}} 
\newcommand{\mc}[1]{{\mathcal{#1}}} 

\def\R{\mathbb{R}}

\newcommand{\norm}[1]{{\left\| #1 \right\|}}
\newcommand{\p}[1]{{\left( #1 \right)}}
\newcommand{\set}[1]{{\left\{ #1 \right\}}}
\newcommand{\br}[1]{{\left[ #1 \right]}}
\newcommand{\abs}[1]{{\left| #1 \right|}}

\def\eps{\varepsilon}

\newcommand{\pd}[3][]{\frac{\partial^{#1} #2}{\partial #3^{#1}}}
\newcommand{\ddt}[1]{\frac{{\rm d} #1}{{\rm d} t}}

\newcommand{\jph}{{j+\frac{1}{2}}}
\newcommand{\jmh}{{j-\frac{1}{2}}}
\newcommand{\jpf}{{j+\frac{1}{4}}}
\newcommand{\jmf}{{j-\frac{1}{4}}}
\newcommand{\dx}{\Delta x}

\newcommand{\dt}{\Delta t}

\graphicspath{{Figures/}}

\title{High-order positivity-preserving well-balanced schemes: combining flux globalization with modified Patankar deferred correction methods}
\author{Philipp \"Offner%
	\thanks{Institute of Mathematics, Clausthal University of Technology,  Erzstraße 1, 38678 Clausthal-Zellerfeld, Germany; {\tt philipp.oeffner@tu-clausthal.de}}, 
	Michael Redle%
	\thanks{Institute of Mathematics, Clausthal University of Technology, Erzstraße 1, 38678 Clausthal-Zellerfeld, Germany; {\tt michael.redle@tu-clausthal.de}}, and %
	Davide Torlo%
	\thanks{Dipartimento di Matematica Guido Castelnuovo, Universit\`a di Roma La Sapienza, p.le Aldo Moro 5, Rome, 00185, Italy; {\tt davide.torlo@uniroma1.it}}}
\date{\today}
\numberwithin{equation}{section}
\begin{document}
\maketitle

\begin{abstract}

	Positivity-preserving schemes for hyperbolic balance laws are typically built from the Zhang-Shu limiter and strong-stability-preserving Runge-Kutta (SSP-RK) time stepping, at the cost of a restrictive time-step condition.
	The modified Patankar deferred correction (mPDeC) method removes this restriction, offering high-order accuracy, conservation, and unconditional positivity.
	However, its solution-dependent Patankar weights make the time update nonlinear, so, unlike with RK methods, preserving (steady and moving) equilibria, i.e., well-balancing (WB), is no longer straightforward.
	In this paper, we construct high-order, WB, positivity-preserving mPDeC schemes by combining mPDeC with a flux-globalization approach.
	We demonstrate the idea on the one-dimensional shallow water equations, using a WB A-WENO spatial discretization, and introduce a modified local characteristic decomposition that remains stable in the presence of dry states.
	Numerical experiments on one-dimensional benchmarks confirm that the method satisfies the desired properties.
\end{abstract}

\noindent\textbf{Keywords:} Non-conservative hyperbolic systems of nonlinear PDEs, well-balanced schemes, positivity-preserving schemes, modified Patankar Deferred Correction time evolution, A-WENO, shallow water equations.
\\

\noindent\textbf{AMS subject classification:} 35B09, 35L65, 65L06, 65M06, 76M20, 86A05

\section{Introduction}\label{sec1}

The focus of this paper is on the development of high-order well-balanced positivity-preserving methods for nonconservative hyperbolic systems of balance laws.
The one-spatial-dimension (1-D) case reads
\begin{equation} \label{e:noncons}
	\bb{U}_t + \bm{F}(\bb U)_x = B(\bm U) \bm U_x,
\end{equation}
where $x$ and $t$ denote the spatial and temporal independent variables, respectively, $\bb{U}\in\R^d$ denotes the vector of evolved quantities, $\bm{F}(\bb{U})$ is the physical flux, and $B(\bb{U})$ represents the matrix of the nonconservative term. 

One such system that fits the nonconservative form \eqref{e:noncons} are the shallow water (SW) equations.
Also known as the Saint-Venant equations, this system describes the evolution of hydrostatic free-surface flows under the influence of gravity and are widely used in applications including river floods and tsunami modeling. 
This nonlinear hyperbolic system of PDEs is derived under the assumption that the characteristic wavelength is large in comparison to the water depth. 
In this work, we consider the SW equations with time-independent bathymetry and without friction:
\begin{equation}\label{e:sw_vec}
	\bm U = \begin{pmatrix}
		h\\ q \\ Z
	\end{pmatrix}, \qquad
	\bm F(\bm U) = \begin{pmatrix}
		q\\ \frac{q^2}{h} + \frac{1}{2}gh^2 \\ 0
	\end{pmatrix}, \qquad
	B(\bm U) = \begin{pmatrix}
		0 & 0 & 0\\
		0 & 0 & -gh \\ 
		0 & 0 & 0
	\end{pmatrix},
\end{equation}
where $h(x,t)$ denotes the fluid thickness, $u(x,t)$ is the velocity, $q \equiv hu$ is the discharge, $Z(x)$ denotes the time-independent bottom topography, and $g$ is the gravitational constant.

Many, if not all, of the numerical approximations of the SW system share the same overarching goal---to provide reliable and physically meaningful simulations at reasonable computational cost.
In this context, high-order methods are particularly attractive, as they can achieve high accuracy on relatively coarse discretizations.
Furthermore, a supplementary strategy for addressing reliability and efficiency is to incorporate structural properties of the underlying PDE system into the discretization directly. 
These structure-preserving methods not only can improve the discretization, but additionally uphold the implicit physical constraints seen within the governing PDE system.
For example, for the SW equations, important properties include positivity of the fluid thickness $h$ and preservation of equilibrium states. 
Schemes addressing these properties are referred to as positivity-preserving and well-balanced (WB) schemes, respectively.

Structure-preserving schemes for the SW system thus remain an active area of research, with a wide variety of approaches developed for different applications and settings.
In the context of SW WB and/or positivity-preserving approaches, there exists many recent efforts in this direction alone---including related developments in finite volume (FV) methods
\cite{berthon2022very, 
	castro-pares-jsc20, 
	Ciallella23WB, 
	michel2017well, 
	kurganov2018finite 
	}, 
continuous and discontinuous finite element methods
\cite{ranocha2017shallow, 
	mantri2024fully, 
	Guerrero22WB, 
	hennes2022, 
	Xing13WB 
	},
among other directions
\cite{abgrall_2022, 
	mario1, 
	Chu25WB5 
	}.
The present paper aims to build upon these developments for SW in a high-order setting; in doing so, we must address the following challenges. 

First, to ensure the WB property and preserve the lake-at-rest ($q \equiv 0$) and moving water ($q \equiv \textrm{const.}\neq 0$) steady state solutions of \eqref{e:sw_vec}, the scheme must exactly balance the flux derivative and the nonconservative terms.
This is not a straightforward computation, as commonly-used shock-capturing discretizations may result in spurious oscillations that are orders of magnitude larger than the original perturbation wave.
Using a very fine mesh might suppress these oscillations---however, this approach drastically increases computation time, and thus often times proves impractical. 
Rather, WB schemes are able to discretely preserve the steady state solutions, and can capture small perturbations of such equilibria even on coarse meshes.
Additionally, they can mitigate small error accumulation between the flux derivative and the source terms, which is particularly important for long-time simulations. 
One popular numerical approach to ensure the WB property is referred to as the flux-globalization approach (see, e.g., \cite{Gascon01Const,castro-pares-jsc20,Chertock18WB,Kurganov23WBPCCU,Ciallella23WB}), where one takes advantage of the quasi-conservative form of \eqref{e:noncons} and reconstructs/interpolates the global fluxes, or the so-called equilibrium variables. This technique has also been extended to capture multi-dimensional steady states \cite{barsukow2024structure,barsukow2026genuinely,barsukow2026stationarity,ziggaf2026arbitrary}.

To ensure positivity of the SW system, we must preserve the positivity constraint $h \geq 0$. 
This is vital for many applications of SW, such as flood simulations and simulations with shorelines.
Moreover, if this constraint is violated numerically, the solution exits the physically relevant regime and the numerical simulation crashes.
One approach to preserve the positivity of the fluid thickness in the context of high-order weighted essentially non-oscillatory (WENO) schemes (see, e.g., \cite{shu1998essentially}) has been through the development of positivity-preserving limiters \cite{zhang2011maximum,perthame1996positivity}.
However, if updating in time using classical explicit schemes such as SSP-RK \cite{gottlieb2001strong}, these limiters impose a restrictive CFL condition. 
Recent strategies have since overcome this limitation, as unconditional positivity-preserving high-order time-stepping techniques based on the production-destruction formulation and the modified Patankar nonlinear weighting technique (see, e.g., \cite{meister2019,burchard2003,torlo2020}) have been proposed \cite{ciallella2025,bender2025, arXiv:2602.14392, ciallella2022}. These techniques allow to remove the restrictive CFL condition at a price of a very sparse linear system to be solved at each time step.

In this paper, we incorporate the flux-globalization WB approach within the arbitrary high-order production-destruction framework of the unconditionally positive modified Patankar deferred correction (mPDeC) time discretization. 
While the framework allows for different combinations of flux-globalization techniques with mPDeC, we demonstrate its effectiveness using the flux-globalization-based WB alternative-WENO (A-WENO) scheme introduced in \cite{Chu25WB5}.
Furthermore, we introduce a modification of the local characteristic decomposition of equilibrium variables, introduced in \cite{chu24local}, that improves the robustness of the approach in the presence of dry states. 
The proposed scheme is thus positivity-preserving, and exactly WB in the sense that it preserves the physically relevant wet/dry lake-at-rest and moving water equilibria of the SW system. 
We demonstrate the performance through a series of numerical experiments, where the results confirm that the proposed approach combines high-order accuracy with robustness, positivity preservation, and the exactly WB property---while avoiding spurious oscillations.

The paper is organized as follows. 
In \S\ref{sec2.1} and \S\ref{sec2.2}, we present the WB A-WENO path-conservative central-upwind (PCCU) scheme for general nonconservative hyperbolic systems of balance laws \eqref{e:noncons}.
\S\ref{sec2.3} follows with this spatial discretization applied directly to the SW system \eqref{e:sw_vec}.
The mPDeC temporal discretization is then presented for the production-destruction framework in \S\ref{sec3}. 
In \S\ref{sec4}, we follow with the construction of the unconditionally positive flux-globalization-based WB scheme using mPDeC. 
We present the numerical results in \S\ref{sec5}, and make our concluding remarks in \S\ref{sec6}.

\section{An overview of the WB A-WENO PCCU scheme}\label{sec2}
For general hyperbolic balance laws, we discretize in space following the WB A-WENO PCCU scheme. 
Recently introduced in \cite{Chu25WB5}, this scheme is a high-order extension of the second-order WB PCCU scheme, presented in \cite{Kurganov23WBPCCU}, which discretely preserves all 1-D steady states of systems in the form \eqref{e:noncons} via the flux-globalization technique. 
We provide an overview to the high-order WB A-WENO PCCU scheme in this section.

As shown in \cite{Kurganov23WBPCCU}, the steady state solutions of nonconservative systems \eqref{e:noncons} satisfy
\begin{equation} \label{e:ss}
	\bm{F}(\bb U)_x - B(\bm U) \bm U_x = M(\bm U) \bm E(\bm U)_x = \bm 0,
\end{equation}
where $M(\bm U) \in \R^{d \times d}$ and $\bm E$ denotes the vector of equilibrium variables, which are constant at steady states. 
To preserve these steady states, we follow \cite{Kurganov23WBPCCU} and apply the flux-globalization approach; introduced in, e.g., \cite{Caselles09Flux,Chertock18WB,Donat11Hybrid,Gascon01Const,Martinez11Hybrid}; in which they use the quasi-conservative equivalent of \eqref{e:noncons}:
\begin{equation} \label{e:quasicons}
	\bb{U}_t + \bb{K}(\bb U)_x = \bb 0,
\end{equation}
where $\bm K(\bm U)$ is the global flux defined by
\begin{equation} \label{e:k_r}
	\bb{K}(\bb U) = \bm F(\bm U) - \bm R(\bm U), \qquad
	\bm R (\bm U) = \int\limits_{\widehat x}^x \br{B(\bm U(\xi, t)) \bm U_\xi(\xi,t))}\ {\rm d}\xi,
\end{equation}
for arbitrary number $\widehat x$. 

\subsection{The second-order WB PCCU scheme}\label{sec2.1}

For a second-order method, \cite{Kurganov23WBPCCU} evolves the quasi-conservative form \eqref{e:quasicons} using a FV approach. 
We therefore introduce a uniform FV mesh $C_j : = [x_\jmh,x_\jph]$ of size $\dx \equiv x_\jph-x_\jmh$ with cell center $x_j = (x_\jph + x_\jmh)/2$ for $1 \leq j \leq N$.
Assume that at time level $t$, the approximate solution, realized in terms of its cell averages
\begin{equation*} \label{e:cellavg}
	\xbar{\bb U}_j(t) :\approx \frac{1}{\dx}\int\limits_{C_j} \bm U (x, t)\ {\rm d}x, 
\end{equation*}
is available. 
For the sake of brevity, we will suppress the  time-dependence of all indexed quantities for the remainder of the paper. 
The cell averages are then advanced in time by evolving the following semi-discretization:
\begin{equation*}\label{e:fv_semi_disc}
	\ddt{}\xbar{\bb U}_j = -\frac{1}{\dx}\br{\bb{\mc K}_{\jph}^{\rm FV} - \bb{\mc K}_{\jmh}^{\rm FV}}, 
\end{equation*}
where $\bb{\mc K}_{\jph}^{\rm FV}$, $0 \leq j \leq N$ denote the WB PCCU numerical fluxes, which read
\begin{equation} \label{e:fvcu_flux}
	\bb{\mc K}_{\jph}^{\rm FV} = \frac{a_\jph^+ \bb K(\bb U_j^{\rm E}) - a_\jph^- \bb K(\bb U_{j+1}^{\rm W})}{a_\jph^+ - a_\jph^-} 
	+ \frac{a_\jph^+  a_\jph^-}{a_\jph^+ - a_\jph^-}\br{\widehat{\bb U}_{j+1}^{\rm W} - \widehat{\bb U}_{j}^{\rm E}}.
\end{equation}
Here, $\bm U_j^{\rm E}$ and $\bm U_j^{\rm W}$ represent the cell interface values of cell $C_j$ at $x_\jph$ and $x_\jmh$, respectively, while $\widehat{\bm U}_j^{\rm E}$ and $\widehat{\bm U}_j^{\rm W}$ are similar quantities computed in a slightly different way that is needed to guarantee the preservation of equilibrium states.
Additionally, $a_{\jph}^\pm$ denote the one-sided local speeds of propagation, which can be approximated using the eigenvalues $\lambda_1 < \dots < \lambda_d$ of the Jacobian matrix $\mc{A}(\bm U) := \pd{\bm F}{\bm U} - B(\bm U)$: 
\begin{equation*}\label{e:wavespeeds}
	\begin{aligned}
		a_\jph^+ &= \max\set{\lambda_d(\mc{A}(\bb U_j^{\rm E})),\lambda_d(\mc{A}(\bb U_{j+1}^{\rm W})), 0}, \\
		a_\jph^- &= \min\set{\lambda_1(\mc{A}(\bb U_j^{\rm E})),\lambda_1(\mc{A}(\bb U_{j+1}^{\rm W})), 0};
	\end{aligned}
\end{equation*}
and $\bm K (\bm U)$ is again the global flux \eqref{e:k_r}. 
$\bm K(\bm U)$ must be evaluated with an appropriate quadrature due to the global integral $\bm R(\bm U)$, which (i) should maintain the WB property; and (ii) since the integrand of $\bm R$ is a nonconservative product, should employ the path conservative technique. 
Following \cite{Kurganov23WBPCCU}, we compute the global flux evaluations 
\begin{equation}\label{e:approxK}
	\bm K(\bm U_j^{\rm E,W}) = \bm F(\bm U_j^{\rm E,W}) - \bm R(\bm U_j^{\rm E,W})
\end{equation}
as follows.

To preserve the WB property, we must employ a sound discretization of the cell interface values $\bm U_j^{\rm E,W}$ and $\widehat{\bm U}_j^{\rm E,W}$, and of the quadrature for global integral $\bm R(\bm U_j^{\rm E,W})$. 
To this end, the cell interface values $\bm U_j^{\rm E,W}$ are computed by (i) converting to equilibrium variables $\bm E_j := \bm E(\xbar{\bm U}_j) $; (ii) applying an appropriate reconstruction to $\bm E_j$ to obtain the equilibrium cell interface values $\bm E_j^{\rm E,W}$; and (iii) obtaining the corresponding $\bm U_j^{\rm E,W}$ values by solving the (nonlinear system of) equations 
\begin{equation}\label{e:nonlin}
	\bm E(\bm U_j^{\rm E,W}) = \bm E_j^{\rm E, W},
\end{equation}
for $\bm U_j^{\rm E,W}$. 
For the second-order approach, \cite{Kurganov23WBPCCU} uses a standard piecewise linear reconstruction of $\bm E_j$ with slopes computed using the nonlinear generalized minmod limiter (see, e.g., \cite{Nessyahu90NonOsc,Sweby84High}). 
We refrain from presenting the second-order reconstruction details for the sake of brevity, as we present the high-order computation of these point values in \S \ref{sec2.2}. 

Once equipped with the point values $\bm U_j^{\rm E,W}$, we then compute the quadrature for $\bm R(\bm U_j^{\rm E,W})$. 
To this end, we follow \cite{Chu25WB5,Kurganov23WBPCCU} and select $\widehat{x} = x_{\frac{1}{2}}$ (the left domain boundary), set $\bm R(\bm U_0^{\rm E}) = \bm 0$, then evaluate recursively using
\begin{equation}\label{e:R_recurs}
	\bm R(\bm U_j^{\rm W}) = \bm R(\bm U_{j-1}^{\rm E}) + \bm B_{\bm \Psi, \jmh}, \qquad
	\bm R(\bm U_j^{\rm E}) = \bm R(\bm U_{j}^{\rm W}) + \bm B_{j}, 
\end{equation} 
for $1 \leq j \leq n$. 
Here, $\bm B_{\bm \Psi, \jph}$ and $\bm B_j$ are computed using appropriate quadratures for the integrals:
\begin{equation}\label{e:Bpsi_Bj}
	\bm B_{\bm \Psi, \jph} = \int\limits_0^1 B(\bm \Psi_\jph(s))
	\bm \Psi_\jph'(s) \ {\rm d}s, \qquad 
	\bm B_j = \int\limits_{C_j} B(\bm U) \bm U_x \ {\rm d}x, 
\end{equation}
where $\bm \Psi_\jph(s) = \bm \Psi_\jph(s;\bm U_{j}^{\rm E}, \bm U_{j+1}^{\rm W})$ represents a sufficiently smooth path connecting the left state $\bm U_{j}^{\rm E}$ and right state $\bm U_{j+1}^{\rm W}$ of the interface at $x_\jph$; that is, 
\begin{equation*}\label{e:path}
	\begin{aligned}
		&\bm \Psi_\jph : [0,1]\times \R^d\times \R^d \rightarrow \R^d, \\
		&\bm \Psi_\jph(0;\bm U_{j}^{\rm E}, \bm U_{j+1}^{\rm W}) = \bm U_{j}^{\rm E} \qquad 
		\bm \Psi_\jph(1;\bm U_{j}^{\rm E}, \bm U_{j+1}^{\rm W}) = \bm U_{j+1}^{\rm W}.
	\end{aligned}
\end{equation*}
To preserve the WB property of the scheme, we follow \cite{Kurganov23WBPCCU} and select a linear segment path that connects the left and right equilibrium states: 
\begin{equation*}\label{e:Epath}
	\bm E_\jph(s)
	 = \bm E_{j}^{\rm E} + s(\bm E_{j+1}^{\rm W} - \bm E_{j}^{\rm E}),
	 \qquad s\in [0,1].
\end{equation*}

To discretize the integrals in \eqref{e:Bpsi_Bj}, we substitute in the relation in \eqref{e:ss} for $B(\bm U) \bm U_x$, yielding
\begin{equation}\label{e:Bs_subbed}
	\begin{aligned}
		\bm B_{\bm \Psi, \jph} &= \bm F(\bm U_{j+1}^{\rm W}) - \bm F(\bm U_{j}^{\rm E}) - \int\limits_0^1 M(\bm \Psi_\jph(s))
		\bm E_\jph'(s) \ {\rm d}s, \\ 
		\bm B_j &= \bm F(\bm U_{j}^{\rm E}) - \bm F(\bm U_{j}^{\rm W}) -\int\limits_{C_j} M(\bm U) \bm E(\bm U)_x \ {\rm d}x.
	\end{aligned}
\end{equation}
We again refer the reader to \cite{Kurganov23WBPCCU} for further details on the second-order discretization, where they approximate these integrals using trapezoidal rule; we will discuss the higher-order approximation in \S\ref{sec2.2}. 

Finally, we obtain $\widehat{\bm U}_j^{\rm E,W}$ by solving a slight variation of the (nonlinear system of) equations in \eqref{e:nonlin}:
\begin{equation}\label{e:hatnl}
		\widehat{\bm E}(\widehat{\bm U}_j^{\rm E,W}) = \bm E_j^{\rm E, W}.
\end{equation}
These slight modifications $\widehat{\bm E}$ of the functions $\bm E$ are selected in way such that (i) $\widehat{\bm U}_j^{\rm E} = \widehat{\bm U}_{j+1}^{\rm W}$ as long as ${\bm E}_j^{\rm E} = {\bm E}_{j+1}^{\rm W}$, which is to ensure the preservation of equilibrium states; and (ii) $\widehat{\bm U}_j^{\rm E,W}\approx\bm U_j^{\rm E, W}$ to keep the selected order of accuracy for smooth solutions.

\subsection{The flux-globalization-based WB A-WENO PCCU scheme}\label{sec2.2}
We now present the high-order extension of the WB PCCU scheme introduced in \cite{Chu25WB5}, in which the vast majority of adjustments are made to the numerical fluxes \eqref{e:fvcu_flux}, the quadratures in \eqref{e:Bs_subbed}, and within the reconstruction. We present the fifth-order extension, but this approach can be extended to arbitrary order analogously.

In the high order A-WENO framework, we follow \cite{Chu25WB5} and instead now evolve point values:
\begin{equation}\label{e:fd_semi_disc}
	\ddt{}\bb U_j = -\frac{1}{\dx}\br{\bb{\mc K}_{\jph} - \bb{\mc K}_{\jmh}}, 
\end{equation}
where $\bm U_j \approx \bm U(x_j,t)$ and $\bb{\mc K}_{\jph}$ are the fifth-order A-WENO numerical fluxes:
\begin{equation}\label{e:aweno_fluxes}
	\bb{\mc K}_{\jph} = \bb{\mc K}_{\jph}^{\rm FV} 
	- \frac{(\dx)^2}{24} (\bb K_{xx})_\jph - \frac{7(\dx)^4}{5760} (\bb K_{xxxx})_\jph.
\end{equation}
Here, $\bb{\mc K}_{\jph}^{\rm FV} $ denotes the second-order WB PCCU numerical flux \eqref{e:fvcu_flux}, whose computation we presented in \S\ref{sec2.1}, and $(\bb K_{xx})_\jph \approx \bm K(\bm U(x_\jph, t))_{xx}$ and $(\bb K_{xxxx})_\jph \approx \bm K(\bm U(x_\jph, t))_{xxxx}$ are the higher order correction terms, computed using finite-difference approximations. 
We follow \cite{Chu25Efficient} and compute these correction terms using the second-order numerical fluxes:
\begin{equation} \label{e:corrections}
	\begin{aligned}
		(\bb K_{xx})_\jph & = \frac{1}{12 (\dx)^2} \br{-\bb{\mc K}_{j-\frac{3}{2}}^{\rm FV} +16\bb{\mc K}_{\jmh}^{\rm FV} -30 \bb{\mc K}_{\jph}^{\rm FV} +16 \bb{\mc K}_{j+\frac{3}{2}}^{\rm FV} - \bb{\mc K}_{j+\frac{5}{2}}^{\rm FV}},\\[4pt]
		(\bb K_{xxxx})_\jph & = \frac{1}{(\dx)^4} \br{\bb{\mc K}_{j-\frac{3}{2}}^{\rm FV} -4\bb{\mc K}_{\jmh}^{\rm FV} +6 \bb{\mc K}_{\jph}^{\rm FV} -4 \bb{\mc K}_{j+\frac{3}{2}}^{\rm FV} + \bb{\mc K}_{j+\frac{5}{2}}^{\rm FV}}.
	\end{aligned}
\end{equation}
Obtaining the correction terms this way allows one to use already-computed values, as opposed to the alternative, which would have to additionally evaluate the physical flux at cell centers. 

To ensure a fifth-order accurate scheme that preserves the WB property, we apply the affine-invariant WENO-Z interpolation; see, e.g., \cite{Don22SI_WENO,Li23SI_WENO,Wang22AI_WENO}, and a detailed review in Appendix \ref{secA}; to the equilibrium variables, yielding point values $\bm E_j^{\rm E,W}$ and $\bm E_{j\pm \frac{1}{4}}$. 
The quarter-values $\bm E_{j\pm \frac{1}{4}}$ will be needed in the quadrature step.
Furthermore, to reduce oscillations, we additionally use the local characteristic decomposition of the equilibrium variables, as recently introduced in \cite{chu24local}.
To this end, we first multiply the system \eqref{e:noncons} by $\pd{\bm E}{\bm U}$ to obtain an evolution equation for the equilibrium variables:
\begin{equation*}\label{e:eq_evol}
	\bm E(\bm U)_t + C(\bm U) \bm E(\bm U)_x = \bm 0,
\end{equation*}
where $C(\bm U) = \pd{\bm E}{\bm U} M(\bm U)$.
Then we evaluate matrix $C$ at the grid points $x_j$, thus yielding constant matrices $C_j := C(\bm U_j)$.
These can further be diagonalized using $C_j = Q_j \Lambda_j Q_j^{-1}$, where $\Lambda_j$ is the diagonal matrix of eigenvalues of $C_j$ and the columns of $Q_j$ are the corresponding eigenvectors. 
As a result, one can obtain the local characteristic equilibrium variables in the neighborhood of $x = x_j$:
\begin{equation}\label{e:lcv}
	\bb{\Gamma}_\ell = Q_j^{-1} \bm E_\ell, \qquad j-2 \leq \ell \leq j+2.
\end{equation} 
We then use the affine-invariant WENO-Z (presented in Appendix \ref{secA}), applied in a component-wise manner, to obtain $\bb{\Gamma}_j^{\rm E,W}$ and $\bb{\Gamma}_{j\pm \frac{1}{4}}$, and finally obtain the equilibrium variable point values using 
\begin{equation}\label{e:Epv}
	\bm E_j^{\rm E,W} = Q_j\bb{\Gamma}_j^{\rm E,W}, \qquad \bm E_{j\pm \frac{1}{4}} = Q_j\bb{\Gamma}_{j\pm \frac{1}{4}}. 
\end{equation}

Equipped now with these point values, we can obtain $\bm U_j^{\rm E,W}$ and $\bm U_{j\pm \frac{1}{4}}$ by solving the nonlinear system in \eqref{e:nonlin}, in turn providing everything needed to approximate the integrals in \eqref{e:Bs_subbed}. 
For the integral connecting the left and right states, we use trapezoidal quadrature:
\begin{equation}\label{e:Bpsi_HO}
	\bm B_{\bm \Psi, \jph} = \bm F(\bm U_{j+1}^{\rm W}) - \bm F(\bm U_{j}^{\rm E}) - \frac{1}{2}\br{M(\bm U_{j+1}^{\rm W}) + M(\bm U_{j}^{\rm E})} \p{\bm E_{j+1}^{\rm W}-\bm E_{j}^{\rm E}}. 
\end{equation}
For the integral across the ``cell", we follow, e.g., \cite{Chu22Fifth}, and compute via five-point Newton-Cotes approach. 
That is, to compute $\bm B_j$, we (i) construct a fourth-degree polynomial interpolation of $M(\bm U)$ and $\bm E$ using the obtained point values $\bm U_j$, $\bm U_j^{\rm E,W}$, $\bm U_{j\pm \frac{1}{4}}$ and $\bm E_j$, $\bm E_j^{\rm E,W}$, $\bm E_{j\pm \frac{1}{4}}$, respectively; (ii) obtain $\bm E_x$ by differentiating the polynomial interpolation; and (iii) substitute these into \eqref{e:Bs_subbed} and integrate exactly to obtain
\begin{equation}\label{e:Bj_HO}
	\begin{aligned}
		\bm B_j \approx& \bm F(\bm U_j^{\rm E}) - \bm F(\bm U_j^{\rm W})\\
		 &- \frac{1}{1890}\Bigg(
		107 \br{M(\bm U_j^{\rm E}) \bm E_j^{\rm W} - M(\bm U_j^{\rm W}) \bm E_j^{\rm E}} \\
		& \hspace{1.4cm} +2112\set{\br{M(\bm U_{\jmf}) - M(\bm U_\jpf)} \bm E_j+ \br{\bm E_\jpf - \bm E_\jmf}M(\bm U_j)}\\
		&\hspace{1.4cm} +804 \set{\br{M(\bm U_j^{\rm E}) - M(\bm U_j^{\rm W})} \bm E_j+ \br{\bm E_j^{\rm W} - \bm E_j^{\rm E}}M(\bm U_j)}\\
		&\hspace{1.4cm}+1024\br{M(\bm U_\jpf)\bm E_\jmf - M(\bm U_\jmf)E_\jpf}
		+945 \br{M(\bm U_j^{\rm E})\bm E_j^{\rm E} - M(\bm U_j^{\rm W})\bm E_j^{\rm W}} \\
		& \hspace{1.4cm} + 1472\br{M(\bm U_\jpf)\bm E_j^{\rm E} - M(\bm U_j^{\rm E})\bm E_\jpf + M(\bm U_j^{\rm W})\bm E_\jmf - M(\bm U_\jmf)\bm E_j^{\rm W}} \\
		& \hspace{1.4cm} + 384\br{M(\bm U_\jmf)\bm E_j^{\rm E} - M(\bm U_j^{\rm E})\bm E_\jmf + M(\bm U_j^{\rm W})\bm E_\jpf - M(\bm U_\jpf)\bm E_j^{\rm W}}\Bigg).
	\end{aligned}
\end{equation}
The last step of the high-order approach is the same as the second-order method, where we compute $\widehat{\bm U}_j^{\rm E,W}$ from solving the slightly-adjusted nonlinear system \eqref{e:hatnl}.
After obtaining these values, we can then compute the FV numerical fluxes in \eqref{e:fvcu_flux}, in turn giving the correction terms in the A-WENO flux \eqref{e:corrections}.

\begin{rmk}
	Both the second-order and fifth-order schemes presented in this section are NOT positivity preserving. 
	We discuss the adjustments to ensure positivity spatially in \S\ref{sec4}.
\end{rmk}

\subsection{Application to the shallow water system}\label{sec2.3}
We now present the fifth-order A-WENO WB PCCU scheme applied to the SW equations, noting again that the positivity-preserving adjustments will be presented in \S\ref{sec4}. 
The 1-D Saint-Venant system 
\eqref{e:sw_vec} admits steady state solutions satisfying \eqref{e:ss} with
\begin{equation}\label{e:MU_E}
	M(\bm U) = \begin{pmatrix}
		1 & 0 & 0\\
		u & h & 0 \\ 
		0 & 0 & 1
	\end{pmatrix},\qquad
	\bm E(\bm U) = \begin{pmatrix}
		q\\ \mathcal{E} \\ 0
	\end{pmatrix}, \qquad
	\mathcal{E} = \frac{q^2}{2h^2} + g (h+Z).
\end{equation} 
First, the system \eqref{e:noncons} with \eqref{e:sw_vec} is put into quasi-conservative form using the setup presented in \eqref{e:k_r}:
\begin{equation*}\label{e:quasi_sw}
	\bm K(\bm U) = \bm F(\bm U) - \bm R(\bm U), \qquad
	\bm R(\bm U) = (0, R)^\top, \qquad
	R:= -\int\limits_{\widehat x}^x gh(\xi, t) Z(\xi)_\xi\ {\rm d}\xi.
\end{equation*}
At time $t$, assume that we have the numerical solution realized at point values $\bm U_j$.
To obtain a WB scheme for this system, we first convert from conservative variables $\bm U$ to equilibrium variables $\bm E(\bm U)$ in \eqref{e:MU_E}; that is, we compute
\begin{equation}\label{e:Ej}
	\mathcal{E}_j : = \frac{1}{2}\p{\frac{q_j}{h_j}}^2 + g (h_j + Z_j).
\end{equation}
Now having the values $q_j$ and $\mathcal{E}_j$, we then convert to the local characteristic equilibrium variables. 
Notice that since the last entry of $\bm E_j = 0$, the conversion  only needs to be done for $q_j$ and $\mathcal{E}_j$.
Thus, consider the  reduced version of \eqref{e:MU_E}:
\begin{equation*}\label{e:M_Ereduced}
	\widetilde{\bm U} = \begin{pmatrix}
		h\\ q 
	\end{pmatrix}, \qquad 
	\widetilde{M}(\widetilde{\bm U}) = \begin{pmatrix}
		1 & 0 \\
		u & h  
	\end{pmatrix},\qquad
	\widetilde{\bm E}(\widetilde{\bm U}) = \begin{pmatrix}
		q\\ \mathcal{E} 
	\end{pmatrix},
\end{equation*} 
where $\mathcal{E}$ is as defined in \eqref{e:MU_E} and $u = q/h$.
We can then obtain the constant matrix $C_j = \pd{\widetilde{\bm E}}{\widetilde{\bm U}} \widetilde{M}(\widetilde{\bm U})\big|_{x = x_j}$ and its corresponding eigenvector matrices:
\begin{equation}\label{e:CQ}
	C_j = \begin{pmatrix}
		u_j & h_j \\
		g & u_j  
	\end{pmatrix},\qquad
	Q_j = \begin{pmatrix}
		\sqrt{h_j} & \sqrt{h_j} \\
		-\sqrt{g} & \sqrt{g} 
	\end{pmatrix},\qquad
	Q_j^{-1} = \frac{1}{2\sqrt{gh_j}}
	\begin{pmatrix}
		\sqrt{g} & -\sqrt{h_j} \\
		\sqrt{g} & \sqrt{h_j} 
	\end{pmatrix}.
\end{equation}
We then (i) use the appropriate eigenvector matrix to obtain the local-characteristic equilibrium variables $\bb \Gamma_j$ at point $x_j$, as shown in \eqref{e:lcv}; 
(ii) interpolate via the affine-invariant WENO-Z scheme (see Appendix A) to obtain the point values $\bb \Gamma_j^{\rm E,W}$ and $\bb \Gamma_{j \pm \frac{1}{4}}$; and (iii) convert back to equilibrium variables using \eqref{e:Epv}, yielding the point values $(\bm E_1)_j^{\rm E,W} = (q_j^{\rm E,W}, \mc E_j^{\rm E,W})^\top$ and $(\bm E_1)_{j \pm \frac{1}{4}} = (q_{j \pm \frac{1}{4}},\mc E_{j \pm \frac{1}{4}})^\top$. Note also that, at the initial time step, the bottom topography $Z_j$ is also interpolated using the affine-invariant WENO-Z approach.

To obtain the conservative variables such that they are consistent with this WB reconstruction, we must solve for $h$ from the newly obtained values $\mc E$ at $x_{j\pm \frac{1}{2}}$ and $x_{j\pm \frac{1}{4}}$. 
For the left interface at $x_\jph$, this is done by solving the nonlinear equation 
\begin{equation}\label{e:EjE}
	\mc E_j^{\rm E} = \frac{1}{2}\p{\frac{q_j^{\rm E}}{h_j^{\rm E}}}^2 + g (h_j^{\rm E} + Z_j^{\rm E}),
\end{equation}
for $h_j^{\rm E}$---we do this using a slight adjustment of the cubic solve presented in, e.g., \cite{Cheng19ME}. 
Computing $h_j^{\rm W}$, $h_\jmf$ and $h_\jpf$ are done analogously; we thus refrain from presenting these nonlinear equations for the sake of brevity. 

Once equipped with the values $h_j^{\rm E,W}$ and $h_{j \pm \frac{1}{4}}$, we compute the global fluxes using \eqref{e:approxK}.
To do so, we need first the quadrature for $\bm R(\bm U_j^{\rm E,W})$; we thus use the recursive relation in \eqref{e:R_recurs} combined with the discrete definitions of $\bm B_{\bb \Psi,\jph}$ and $\bm B_j$ in \eqref{e:Bpsi_HO} and \eqref{e:Bj_HO}, respectively. 

For the last missing piece in the FV numerical fluxes \eqref{e:fvcu_flux}, we must obtain $\widehat{\bm U}_j^{\rm E,W}$ from \eqref{e:hatnl}.
For the SW equations, we do this by taking $\widehat q_j^{\rm E,W} = q_j^{\rm E,W}$, and find $\widehat h_j^{\rm E}$ and $\widehat h_{j+1}^{\rm W}$ by solving the following nonlinear equations:
\begin{equation*}\label{e:Ejhat}
	\mathcal{E}_j^{\rm E} = \frac{1}{2}\p{\frac{q_j^{\rm E}}{\widehat h_j^{\rm E}}}^2 + g (\widehat h_j^{\rm E} + \widehat Z_\jph),\qquad 
	\mathcal{E}_{j+1}^{\rm W} = \frac{1}{2}\p{\frac{q_{j+1}^{\rm W}}{\widehat h_{j+1}^{\rm W}}}^2 + g (\widehat h_{j+1}^{\rm W} + \widehat Z_\jph),
\end{equation*}
where the only difference within these nonlinear equations and \eqref{e:EjE} is $\widehat Z_\jph = \frac{1}{2}(Z_j^{\rm E} + Z_{j+1}^{\rm W})$, which is used to ensure the desired property that $\widehat h_{j+1}^{\rm W} = \widehat h_{j}^{\rm E}$ when $ \mathcal{E}_{j+1}^{\rm W} = \mathcal{E}_{j}^{\rm E}$. 

Finally, we can compute the FV numerical fluxes \eqref{e:fvcu_flux}, and in turn the fifth-order accurate A-WENO numerical fluxes \eqref{e:aweno_fluxes}. 
Hence, we have the complete semi-discrete scheme, and now need to select an appropriate time evolution.

\section{Modified Patankar DeC method}\label{sec3}
The \textit{modified Patankar Deferred Correction (mPDeC)} method, introduced in~\cite{torlo2020}, is a high-order, conservative, and unconditionally positivity-preserving time integration scheme tailored for production-destruction systems (PDS). 
It extends the classical deferred correction (DeC) framework \cite{dutt2000,ong2020,veiga2021} by incorporating a dynamic correction mechanism inspired by the modified Patankar trick \cite{meister2019,burchard2003}, ensuring robustness and physical consistency, which is especially critical in the numerical simulation of hyperbolic conservation laws such as the SW equations \cite{ciallella2025,bender2025, arXiv:2602.14392, ciallella2022}.
In this section, we briefly review the time evolution DeC and mPDeC schemes, as both are implemented for the time evolution. 

\subsection{Deferred correction scheme}\label{sec3.1}
To present the DeC time evolution, we follow the framework presented in \cite{Abgrall17HO,micalizzi2024efficient}, since it presents the DeC approach using a compact operator notation.
The DeC time evolution is based on the Picard iteration to minimize error, and mimics this process at the discrete level to improve accuracy over several iterative steps. 

Consider the following time-dependent initial value problem:
\begin{equation}\label{e:ODE}
	\ddt{}\bm c(t) = \bm f(\bm c(t)), \qquad \bm c(0) = \bm c_0,
\end{equation}
where $\bm c(t):\R\rightarrow\R^S$ and $\bm f: \R^S \rightarrow\R^S$  result from the semi-discretization \eqref{e:fd_semi_disc}.
The DeC approach can be written as a combination of two operators: (i) a low-order ``easy-to-solve" time evolution operator $\mc L^1$, such as, e.g., explicit Euler; and (ii) a high-order operator $\mc L^2$ that may present practical difficulties, such as, e.g., an implicit RK method.

Let us focus on one timestep $[t_n,t_{n+1}]$, and let $\{t^\ell\}_{\ell=0}^r$ be $r+1$ nodes subdividing the time step such that $t^\ell = t_n + \eta^\ell \dt$, with $0 = \eta^1 <  \cdots  < \eta^r = 1$ and $0 \leq \ell \leq r$.
In this work, we choose $\eta^\ell$ to be the Gauss-Lobatto subnodes; see, e.g., Figure \ref{fig:time-substeps-gl}; as the order of the $\mc L^2$ operator depends on the subinterval distribution---in selecting these nodes, the $\mc L^2$ order of accuracy is $\mc O(\dt^{2r})$.
Then, on each subinterval $[t^0, t^\ell]$, we mimic the Picard iteration; for the $\mc L^1$ operator, this is done using a left Riemann sum approximation (i.e., the forward Euler discretization), and for the $\mc L^2$ operator the integral is approximated using an appropriate quadrature:
\begin{equation}\label{e:L1_L2}
	\mc L^1(\underline{\bb c}) = \begin{pmatrix}
		\bb c^1 - \bb c^0 - \eta^1\dt\bm f(\bb c^0) \\
		\vdots\\
		\bb c^r - \bb c^0 - \eta^r\dt\bm f(\bb c^0)
	\end{pmatrix},\qquad
	\mc L^2(\underline{\bb c}) = \begin{pmatrix}
		\displaystyle \bb c^1 - \bb c^0 - \dt\sum_{k = 0}^r \theta_{k, 1} \bm f(\bb c^k) \\
		\vdots\\
		\displaystyle \bb c^r - \bb c^0 - \dt\sum_{k = 0}^r \theta_{k, r}  \bm f(\bb c^k)
	\end{pmatrix},
\end{equation}
where $\underline{\bm c} = (\bm c^0, \dots, \bm c^r)$ is used to abbreviate the dependence of the two operators on all different subinterval solutions $\bm c^\ell$, and $\theta_{k, \ell}$ denotes the high-order quadrature weights on the subinterval $[t^0, t^\ell]$.

\begin{figure}[t]
	\centering
	\begin{tikzpicture}[>=stealth]
		
		\draw[->, thick] (0,0) -- (10.5,0)
		node[right] {$t$};
		
		\foreach \x/\label in {
			0.8/$t^0=t_n$,
			2.05/$t^1$,
			4.15/$t^2$,
			6.85/$t^3$,
			9.7/$t^r=t_{n+1}$
		}{
			\fill (\x,0) circle (2pt);
			\draw (\x,0.12) -- (\x,-0.12);
			\node[below=5pt] at (\x,0) {\label};
		}
		
		\draw[->] (0.8,-0.75) -- (9.7,-0.75);
		\node[below] at (5.25,-0.75) {$\Delta t$};
		
		\draw[->] (0.8,0.65) -- (2.05,0.65);
		\draw[->] (2.05,0.65) -- (4.15,0.65);
		\draw[->] (4.15,0.65) -- (6.85,0.65);
		\draw[->] (6.85,0.65) -- (9.7,0.65);
		
		\node[above=9pt] at (0.8,0.65) {$\eta^0=0$};
		\node[above=9pt] at (2.05,0.65) {$\eta^1$};
		\node[above=9pt] at (4.15,0.65) {$\eta^2$};
		\node[above=9pt] at (6.85,0.65) {$\eta^3$};
		\node[above=9pt] at (9.7,0.65) {$\eta^r=1$};
		
	\end{tikzpicture}
	\caption{ \sf
		Subdivision of the time step $[t_n,t_{n+1}]$ using
		non-equidistant Gauss-Lobatto nodes.
	}
	\label{fig:time-substeps-gl}
\end{figure}

The aim of the DeC algorithm is then to iteratively approximate the numerical solution $\underline{\bm c}^*$ of the $\mc L^2(\underline{\bm c}^*) = \bm 0$  scheme without using the inherently nonlinear implicit evolution within the $\mc L^2$ operator. 
Let the successive iterations be denoted by $0 \leq m \leq M$, with $M$ is the total number of iterations. 
Then the DeC algorithm is defined as follows:\\
\textbf{DeC Algorithm}
\begin{enumerate}
	\item Set $\bm c^{0,(m)} : = \bm c(t_n)$ for $ 0 \leq m \leq M,\quad $ and $\quad \bm c^{\ell,(0)} : = \bm c(t_n) $ for $ 0 \leq \ell \leq r.$
	\item Iterate: $\displaystyle \mc L^1( \underline{\bm c}^{(m)}) = \mc L^1(\underline{\bm c}^{(m-1)}) - \mc L^2(\underline{\bm c}^{(m-1)}) $ for $1 \leq m \leq M.$ 
	\item Set $\bm c^{n+1} = \bm c^{r, (M)}$.
\end{enumerate}
The DeC algorithm described maintains exact conservation and is arbitrarily high-order accurate (with order $\mc O(\dt^{\min\set{M+1, 2r}})$; where $M$ is the number of iterations and $r+1$ is the number of Gauss-Lobatto sub-nodes), but is not inherently positivity-preserving when evolving in time. 
Thus, we require an appropriate adjustment to ensure positivity of select quantities, such as the fluid height $h$ in the SW system \eqref{e:sw_vec}.

\subsection{The modified Patankar adjustment to DeC}
In the case the initial value problem \eqref{e:ODE} can be written as a PDS---that is, if $\bm f(\bm c)$ can be deconstructed in a way such that 
\begin{equation}\label{e:pds_rhs}
	f_\alpha (\bm c) = \sum_{\beta = 1}^S \p{p_{\alpha,\beta}(\bm c) - d_{\alpha,\beta}(\bm c)},
\end{equation}
where $f_\alpha$ is the $\alpha$-th component of $\bm f$, and $p_{\alpha,\beta},\ d_{\alpha,\beta} \geq 0$ are the production and destruction terms, respectively from component $c_\beta$ to $c_\alpha$---then one can employ the modified Patankar time evolution approach. 
Initial value problems of type \eqref{e:ODE} with \eqref{e:pds_rhs} describe many applications, such as chemical or biological reactions, see, e.g., \cite{burchard2003}; but the semi-discretization of hyperbolic conservation/balance laws can also be interpreted as a PDS, see, e.g., \cite{Huang19PP,Huang19third,Meister14PP}.
Given the ODE contexts of PDS in chemical and biological reactions, it was vital to ensure that mass was conserved (if the PDS is conservative: $p_{\alpha,\beta}(\bm c) = d_{\beta,\alpha}(\bm c)$ for all $\alpha, \beta, \bm c$) and the system remained unconditionally positive for all time. 
The modified Patankar (mP) approach addresses both these issues simultaneously. 

The idea behind mP is as follows. 
From literature (see, e.g., \cite{burchard2003}), it is well known that the explicit Euler time evolution is conservative, but positivity is conditional on the time step restriction. 
However, the implicit Euler method is both conservative and positivity-preserving without dependence on a time step restriction (i.e., \textit{unconditionally positive})---but then the issue becomes about solving a nonlinear system, which is computationally costly. 
Patankar addressed this cost issue and developed an unconditionally positive numerical scheme for PDS systems by first employing the so-called Patankar weights to the destruction terms in the explicit Euler discretization \cite{Patankar80Numerical}: 
\begin{equation*}\label{e:Patankar}
	c_\alpha^{n+1} = c_\alpha^n + \dt \p{\sum_{\beta = 1}^S p_{\alpha,\beta}(\bb c^n) - \sum_{\beta = 1}^S d_{\alpha,\beta}(\bb c^n)\frac{c_\alpha^{n+1}}{c_\alpha^n}},\qquad
	1 \leq \alpha \leq S.
\end{equation*}
This approach is indeed unconditionally positive and now computationally cost-efficient given that it is linearly implicit, but it now violates the conservative property.
Leveraging this foundation, Burchard et al. \cite{burchard2003} further extended the Patankar approach to then be conservative by additionally weighting the production terms:
\begin{equation}\label{e:mP}
	c_\alpha^{n+1} = c_\alpha^n + \dt \p{\sum_{\beta = 1}^S p_{\alpha,\beta}(\bb c^n)\frac{c_\beta^{n+1}}{c_\beta^n} - \sum_{\beta = 1}^S d_{\alpha,\beta}(\bb c^n)\frac{c_\alpha^{n+1}}{c_\alpha^n}}\qquad
	1 \leq \alpha \leq S,
\end{equation}
and hence was named the mP scheme.
Their proposed method provably maintained the unconditionally positive property, ensured conservation as long as the PDS was conservative, and remains in a cost-efficient linearly implicit discretization---however, the method is first-order. 

The mP scheme has subsequently been extended to higher order, of which the mPDeC approach---introduced in \cite{torlo2020}---is \textit{arbitrary} order of accuracy. 
The mPDeC approach follows the same algorithm as DeC, except the mP weights are additionally applied to $\mc L^2$ in \eqref{e:L1_L2}; thus, the mPDeC algorithm is the same as DeC, except step 2 now iterates over the linearly implicit setup
\begin{equation*}\label{e:iterate_mP}
	\mc L^1( \underline{\bm c}^{(m)}) = \mc L^1(\underline{\bm c}^{(m-1)}) - \mc L^2(\underline{\bm c}^{(m-1)},\underline{\bm c}^{(m)}),\qquad 1 \leq m \leq M.
\end{equation*}
Equivalently, this iteration step of the mPDeC algorithm can be rewritten as 
\begin{equation*}\label{e:mPDeC}
	c^{\ell,(m)}_\alpha = c^n_\alpha + \Delta t \sum_{k=0}^r \theta_{k,\ell} \br{
	\sum_{\beta = 1}^S \p{
	p_{\alpha,\beta}(\bm c^{k,(m-1)}) \cdot \frac{c^{\ell,(m)}_{\gamma(\beta,\alpha,\theta_{k,\ell})}}{c^{\ell,(m-1)}_{\gamma(\beta,\alpha,\theta_{k,\ell})}} \\
	 - d_{\alpha,\beta}(\bm c^{k,(m-1)}) \cdot \frac{c^{\ell,(m)}_{\gamma(\alpha,\beta,\theta_{\ell,m})}}{c^{\ell,(m-1)}_{\gamma(\alpha,\beta,\theta_{\ell,m})}}
	}}, 
\end{equation*}
for all $1 \leq \ell \leq r$,\ $1 \leq m \leq M$ and $1 \leq \alpha \leq S$, where $\gamma(\alpha, \beta, \theta) = \alpha $ if $\theta \geq 0$ and $\gamma(\alpha, \beta, \theta) = \beta $ if $\theta < 0$.
In other words, \eqref{e:mPDeC} is the linearly implicit system $\mathbb M \bm c^{\ell, (m)} = \bm c^n$, where the mass matrix $\mathbb M$ reads
\begin{equation}\label{e:massmat}
	\mathbb M(\bm c^{\ell, (m-1)}) = 
	\begin{cases}
		\displaystyle 1 + \dt \sum_{k = 0}^r \sum_{\beta = 0}^S \frac{\theta_{k,\ell}}{c^{\ell,(m-1)}_\alpha} \br{
		d_{\alpha,\beta}(\bm c^{k,(m-1)}) \mathbbold{1}_{\{\theta_{k,\ell} > 0\}} - p_{\alpha,\beta}(\bm c^{k,(m-1)}) \mathbbold{1}_{\{\theta_{k,\ell} < 0\}}
		}, & \textrm{for } \alpha = \beta, \\
		\displaystyle -\dt \sum_{k = 0}^r  \frac{\theta_{k,\ell}}{c^{\ell,(m-1)}_\beta} \br{
			p_{\alpha,\beta}(\bm c^{k,(m-1)}) \mathbbold{1}_{\{\theta_{k,\ell} > 0\}} - d_{\alpha,\beta}(\bm c^{k,(m-1)}) \mathbbold{1}_{\{\theta_{k,\ell} < 0\}}
		}, & \textrm{for } \alpha \neq \beta,
	\end{cases}
\end{equation}
where $\mathbbold 1$ is the indicator function.

The final time update mPDeC is again taken using $\bm c^{n+1} = \bm c^{r,(M)}$. 
This approach is arbitrarily high-order accurate with order $\mc O(\dt^{\min\set{M+1, 2r}})$, linearly implicit and thus computationally efficient, preserves conservation of the PDS, and guarantees unconditional positivity.

\begin{rmk}(Division by a small $c_\alpha$) 
	In the case that $c_\alpha$ is zero or near zero, the divisions within the mass matrix in \eqref{e:massmat} cause it to become ill-conditioned. 
	Hence, to avoid this division by a small quantity, we follow the suggestion of, e.g., \cite{torlo2020} and apply a desingularization; for numerator $n$ and denominator $d$ we take
	
	$$
	\frac{n}{d} \approx
	\begin{cases}
		0, & \text{if } d < 10^{-8}, \\
		\displaystyle \frac{2d \cdot n}{d^2 + \max\{d^2, 10^{-8}\}}, & \text{otherwise}.
	\end{cases}
	$$
	This ensures numerical stability while preserving the physical behavior; see, e.g., \cite{ciallella2022} for more details.
\end{rmk}

\section{The unconditionally positive WB scheme using mPDeC}\label{sec4}

We now present how to ensure positivity, while keeping the WB property, of the semi-discretization presented in \S\ref{sec2}. 
One requirement to combine the semi-discrete form with the unconditionally positive mPDeC time evolution is to put the semi-discretization in PDS form \eqref{e:pds_rhs}. 
In systems with sources, such as that in \eqref{e:noncons}, it is unclear whether we combine the source with one of the production or destruction terms of mPDeC, or treat it as its own term. 

However, the combination with a flux-globalization-based approach allows us to follow previous work directly. 
Previous work, such as that in, e.g., \cite{ciallella2022, ciallella2025}, present one way to set up the production and destruction matrices within \eqref{e:pds_rhs} for conservation laws. 
Since flux-globalization techniques (including that we present in the semi-discretization \eqref{e:fd_semi_disc}) use the quasi-conservative form, we can directly follow the previous setup of these matrices, now just on the global numerical fluxes. 
Let $P(\bm U),\ D(\bm U) \in \R^{N\times N}$ denote the production and destruction matrices, respectively, with corresponding positive entries $p_{\alpha,\beta}$ and $d_{\alpha,\beta}$, $1 \leq \alpha, \beta \leq S$, which in this usage, $S = N$. 
Then for the semi-discretization in \eqref{e:fd_semi_disc}, we take
\begin{equation}\label{e:pd_entries}
	\begin{aligned}
		&p_{j,j-1} = \frac{1}{\dx} \bb{\mc K}^{+}_\jmh, 
		\qquad p_{j,j+1} = \frac{1}{\dx} \bb{\mc K}^{-}_\jph,\\
		&d_{j,j-1} = \frac{1}{\dx} \bb{\mc K}^{-}_\jmh, 
		\qquad d_{j,j+1} = \frac{1}{\dx} \bb{\mc K}^{+}_\jph,
	\end{aligned}
\end{equation}
where $\bb{\mc K}^{+}_\jph = \max\{\bb{\mc K}_\jph,0\}$ and $\bb{\mc K}^{-}_\jph = -\min\{\bb{\mc K}_\jph,0\}$, and $\bb{\mc K}_\jph$ is the global flux discretization defined as in \eqref{e:aweno_fluxes}. 
The set up \eqref{e:pd_entries} meets all requirements for unconditional positivity---as long as the quantity of interest is positive at time $t^n$.

\begin{rmk}[Practical aspects for shallow water] 
	For the SW system \eqref{e:sw_vec}, we only apply mPDeC to the $h$ evolution.
	It is physically realistic that the quantity $q$ becomes negative from positive values; thus, $q$ is evolved numerically with the classical DeC.
\end{rmk}

\begin{rmk}[Linear solver]
	In one spatial dimension, \eqref{e:pd_entries} results in the mass matrix $\mathbb M$ in equation \eqref{e:massmat} being a tri-diagonal matrix, barring boundary conditions. 
	In this work, we consider zero-extrapolation, Dirichlet, and periodic boundary conditions; of which, only periodic conditions result in a non-tri-diagonal matrix. 
	In the case where $\mathbb M$ is tri-diagonal, we solve the system with Thomas algorithm.
	In the periodic case, the resulting mass matrix is tri-diagonal with the off-diagonal corners being non-zero.
	In this case, we use a Sherman-Morrison decomposition of the matrix $\mathbb M$ to obtain the solution to $\mathbb M \bm c^{\ell, (m)} = \bm c^n$ explicitly via two separate tri-diagonal solves. 
\end{rmk}

To ensure positivity of water height $h$ in the SW system \eqref{e:sw_vec}, we must ensure the construction of point values $h_j^{\rm E,W},$ $\widehat{h}_j^{\rm E,W}$, and $h_{j\pm \frac{1}{4}}$ remains positive and does not result in possible numerical blow up for values near zero. 
For point values $h_j^{\rm E,W}$ and $h_{j\pm \frac{1}{4}}$, we ensure positivity through the Zhang-Shu bound-preserving limiter seen in, e.g., \cite{zhang2011maximum}. 
Note that we apply this limiter to conservative variable point values $h_j$, $h_j^{\rm E,W},$ and $h_{j\pm \frac{1}{4}}$ instead of those for the equilibrium variable $E$ in \eqref{e:Ej}.
We do this since $h_j$, $h_j^{\rm E,W},$ and $h_{j\pm \frac{1}{4}}$ only arise in the discretization of the $M(\bm U)$ pieces of the $\bm B_{\bm \Psi, \jph}$ and $\bm B_j$ computations in \eqref{e:Bpsi_HO} and \eqref{e:Bj_HO}, respectively. 
Moreover, the terms dependent on $M(\bm U)$ vanish in the case $\bm E_j = \bm E_j^{\rm E,W} = \bm E_{j \pm \frac{1}{4}}$, thus implying that adjusting the $h$ point values via the Zhang-Shu limiter does not destroy the WB property of the scheme. 
We additionally do not have to worry about a moving equilibria in combination with dry regions, as they cannot occur simultaneously.

The same cannot be said for point values $\widehat{h}_j^{\rm E,W}$, as applying the Zhang-Shu limiter here will indeed destroy the preservation of steady states. 
This is due to the desired property $\widehat h_{j+1}^{\rm W} = \widehat h_{j}^{\rm E}$ when $\mc E_{j+1}^{\rm W} = \mc E_{j}^{\rm E}$, which is needed to ensure the WB property of the scheme. 
Thus, in the case of a dry region, we must make a different adjustment in $\widehat{h}_j^{\rm E,W}$ computation. 
In this work, we simply set a lower bound that neither $\widehat h_{j+1}^{\rm W} $ or $ \widehat h_{j}^{\rm E}$ can exceed, and if the lower threshold is exceeded we adjust both; that is, 
\begin{equation*}
	h_{j}^{\rm E},\ h_{j+1}^{\rm W} = 
	\begin{cases}
		10^{-16}, & \textrm{if } \min\{h_{j}^{\rm E},h_{j+1}^{\rm W}\} \leq 10^{-15}, \\
		h_{j}^{\rm E},\ h_{j+1}^{\rm W},  & \textrm{otherwise.}
	\end{cases}
\end{equation*}
Since we adjust both point values $h_{j}^{\rm E}, $ and $h_{j+1}^{\rm W}$ simultaneously in the case of either is negative, it does not destroy conservation, as we can see this adjustment would result in a cancellation of the second term of the FV numerical flux \eqref{e:fvcu_flux}.

We avoid numerical blow up for values near zero in three necessary ways. 
The first of which was described in Remark 3.1, where we correct for small $h$ values in the mass matrix via a desingularization. 
The second follows similarly: throughout the algorithm, we require the computation of $u = q/h$ at varying points---we therefore desingularize using the approach seen in, e.g., \cite{Kurganov07second}:
\begin{equation*}
	u = \frac{\sqrt{2} hq}{\sqrt{h^4 + \max\{h^4,\eps\}}},
\end{equation*}
where all possible subscripts and superscripts are omitted for the sake of simplicity, and in this work we take $\eps = \min\{10^{-8}, \dx^4\}$.
We refer the reader to \cite{Kurganov07second} for the advantages of using this different regularization for the computation of velocity.

The final algorithmic adjustment we make is to the local characteristic decomposition in \S \ref{sec2.3} (see also \cite{chu24local}). 
The adjustment for this must allow a stable interpolation of quantities consistent with the equilibrium variables so that the scheme remains WB. 
Furthermore, this is not as simple as applying a desingularization, as equation \eqref{e:lcv} suggests $Q_j^{-1}$ in \eqref{e:CQ} multiplies $\bm E_\ell$ for $j-2 \leq \ell \leq j+2$.
This implies that if $h_j$ is small, while $\bm E_\ell$ is within a wet region, there would be a substantial large values within the decomposition. 
Therefore, we forego the local characteristic decomposition in dry regions altogether; that is, if 
$$
\min_{j-2 \leq \ell \leq j+2}{h_\ell} < \eps_1, 
$$
then we use the component-wise A-WENO interpolation on $\bm E_j$, as opposed to the local characteristic equilibrium variables $\bb \Gamma_j = Q_j^{-1}\bm E_j$. 
This approach is still consistent with the equilibrium variables, and thus additionally WB.
Note in this work, we take $\eps_1 = 10^{-3}$. 
Having these adjustments to ensure positivity while remaining WB, we complete the description of the numerical scheme presented in this study.

\begin{rmk}[Zhang-Shu limiter time-step restriction]
	Although we use the Zhang-Shu limiter to spatially ensure positivity, the use of the linearly implicit mPDeC for the time evolution of $h$ allows us to forego its additional strong CFL restriction. In practice, we run the simulations with classical explicit CFL due to the stability of the underlying FD method. 
\end{rmk}

We now present the main the theoretical preservation of the WB property in the proposed scheme. 
The proof for positivity follows analogously to that presented in, e.g, \cite{torlo2020}.

\begin{thm}[WB property of the mPDeC scheme]
	Given a well-balanced semi-discretization of \eqref{e:fd_semi_disc}, the mPDeC scheme with the semi-discretization in \eqref{e:fd_semi_disc} is well-balanced for the same equilibrium states.	
\end{thm}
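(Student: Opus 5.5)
We argue by induction on the DeC correction index $m$ (and, inside a time step, on the subnodes $\ell$), showing that the discrete equilibrium $\bm U^n$ is a fixed point of every mPDeC iteration. First we fix the meaning of the hypothesis. A semi-discretization \eqref{e:fd_semi_disc} is well-balanced for an equilibrium state $\bm U^*$ (lake at rest, wet/dry lake at rest, or moving water) if its right-hand side vanishes there. For \eqref{e:fd_semi_disc} this is the stronger, flux-level statement that the global numerical fluxes are constant in $j$: $\bb{\mc K}_{\jph}(\bm U^*) = \bb{\mc K}^*$ for all $j$. This holds for the WB A-WENO PCCU flux. At an equilibrium, $\bm E_j^{\rm E}=\bm E_{j+1}^{\rm W}$, so $\widehat{\bm U}^{\rm E}_j=\widehat{\bm U}^{\rm W}_{j+1}$, and $\bm K(\bm U_j^{\rm E})=\bm K(\bm U_{j+1}^{\rm W})$ is independent of $j$. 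Hence each $\bb{\mc K}^{\rm FV}_\jph$ is the same constant, and the corrections \eqref{e:corrections} vanish. If only $\bm f(\bm U^*)=\bm 0$ were assumed, we would note that it already forces $\bb{\mc K}_\jph-\bb{\mc K}_\jmh=0$, i.e.\ constant fluxes, up to boundary treatment. We also use that the positivity fixes of \S\ref{sec4} do not alter this: the Zhang-Shu limiter acts only on terms multiplied by differences of $\bm E$, the lower bound on $\widehat h$ is applied symmetrically, and dropping the characteristic decomposition still interpolates constant $\bm E$ exactly.

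The key step is to show that, with constant fluxes, each mP-weighted row reduces exactly to the unweighted one when all stages equal $\bm U^*$. Take $\underline{\bm c}^{(m-1)}=(\bm c^*,\dots,\bm c^*)$, where $\bm c^*$ is the $h$-component. By \eqref{e:pd_entries}, $p_{j,j-1}=\mc K^{*+}/\dx$, $d_{j,j+1}=\mc K^{*+}/\dx$, $p_{j,j+1}=\mc K^{*-}/\dx$, $d_{j,j-1}=\mc K^{*-}/\dx$. We then check directly that $\bm c^{\ell,(m)}=\bm c^*$ solves the linear system $\mathbb M(\bm c^{*})\bm c^{\ell,(m)}=\bm c^n$ from \eqref{e:massmat}. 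Evaluated at $\bm c^*$, each Patankar ratio $c^{\ell,(m)}_\gamma/c^{\ell,(m-1)}_\gamma$ equals $1$. The row then collapses to $c^*_j+\dt\sum_k\theta_{k,\ell}\bigl(\sum_\beta p_{j\beta}-d_{j\beta}\bigr)=c^*_j+\dt\sum_k\theta_{k,\ell}f_j(\bm c^*)=c^*_j$, which holds for either sign of $\theta_{k,\ell}$ through the $\gamma$ switch. Uniqueness follows because $\mathbb M$ is an M-matrix: it is diagonally dominant by columns and nonsingular, as shown in \cite{torlo2020}. For the $q$-component, evolved by classical DeC, the iteration in step 2 reads $\mc L^1(\underline{\bm c}^{(m)})=\mc L^1(\underline{\bm c}^{(m-1)})-\mc L^2(\underline{\bm c}^{(m-1)})$. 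Here $\mc L^2(\underline{\bm c}^*)=0$ since $\bm f(\bm c^*)=0$, and $\mc L^1$ is explicit and injective in $\underline{\bm c}$, so $\underline{\bm c}^{(m)}=\underline{\bm c}^*$.

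Finally we close the induction. The initialization $\bm c^{\ell,(0)}=\bm c(t_n)=\bm c^*$ starts it, the step above propagates it to every $m\le M$, and $\bm c^{n+1}=\bm c^{r,(M)}=\bm c^*$. A second induction over time steps extends the result to all $n$.

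The main obstacle is the coupling between components and the desingularizations. The fluxes for $h$ depend on $q$ and vice versa, so all components must be shown to stay at $\bm U^*$ simultaneously at each stage; this is why the induction is over the whole stage vector. In dry cells the desingularized ratio $n/d$ of Remark 3.1 is not exactly $1$, and for $d<10^{-8}$ it is set to $0$. We handle this by observing that in dry regions of a lake-at-rest state the corresponding fluxes are zero, as are the terms they multiply, so the modified ratios multiply vanishing production and destruction terms. For wet cells with $d\ge 10^{-8}$ and $n=d$, the formula gives exactly $2d^2/(d^2+d^2)=1$. This point must be verified carefully.
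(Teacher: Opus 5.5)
Your proof is correct and follows the paper's route. You substitute the equilibrium into the linearly implicit Patankar system, observe that every Patankar ratio is one and that production and destruction cancel because the fluxes are $j$-independent, and conclude by uniqueness of the solution. The difference is only in coverage: you carry this out for every mPDeC stage and iteration, including the DeC-evolved $q$ and the coupling through the fluxes, rather than for a single mP-Euler step. Your sign-free bookkeeping with $\mc K^{*+},\mc K^{*-}$ also covers the zero-flux case that the paper's ``w.l.o.g.\ $\mc K^h>0$'' leaves implicit.

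There is one slip in your side remark on desingularization, which the paper's proof does not address since it uses exact ratios. For $10^{-8}\le d<10^{-4}$ one has $\max\{d^2,10^{-8}\}=10^{-8}$, so the regularized ratio is $2d^2/(d^2+10^{-8})<1$, not $1$. Exactness with the regularization therefore needs $h\ge 10^{-4}$ wherever the $h$-flux is nonzero. At lake at rest this causes no trouble: all $h$-fluxes vanish and $\mathbb M$ is the identity.
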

\begin{proof}
	Given an equilibrium state $\bm U^{\text{eq}}$ and the corresponding equilibrium variables $\bm E^{\text{eq}}$, we have that the semi-discretization in \eqref{e:fd_semi_disc} is WB, and thus $\bb{\mc K}_{j+\frac12}=\bb{\mc K}_{j-\frac12}=\bb{\mc{K}}^{\text{eq}}$ for all $j$.
	Now, let us prove that, for one mP-Euler time step \eqref{e:mP}, the equilibrium state is preserved at the next time step. The extension to mPDeC is straightforward.
	Since $\bb{\mc K}_{j+\frac12}=\bb{\mc K}_{j-\frac12}$, suppose, w.l.o.g., that ${\mc K}^h_{j+\frac12}={\mc K}^h_{j-\frac12}>0$, where ${\mc K}^h_{j+\frac12}$ denotes the first component of $\bb{\mc K}_{j+\frac12}$, corresponding to the water height $h$, the variable we want to keep positive. 
	Then, by \eqref{e:pd_entries}, we have that $p_{j,j+1} = d_{j,j-1} = 0$ and $p_{j,j-1} = d_{j,j+1} = \frac{1}{\dx}{\mc K}^h_{j+\frac12}$. 
	Since, the mP-Euler system has a unique solution \cite{burchard2003}, we prove that $\bm U^{n+1} = \bm U^n = \bm U^{\text{eq}}$ is a solution of the mP-Euler system.
	Indeed, the update formula for the first component $h$ reads
	\begin{align*}
		h^{n+1}_j &= h^n_j + \dt\left(  
		 \underbrace{p_{j,j-1}(\bm U^n)}_{={\mc K}^{\text{eq},h}}\underbrace{\frac{h_{j-1}^{n+1}}{h_{j-1}^{n}}}_{=1}
		+\underbrace{p_{j,j-1}(\bm U^n)}_{=0} \underbrace{\frac{h_{j+1}^{n+1}}{h_{j+1}^{n}}}_{=1} 
		-\underbrace{d_{j,j+1}(\bm U^n)}_{={\mc K}^{\text{eq},h}} \underbrace{\frac{h_{j}^{n+1}}{h_{j}^{n}}}_{=1}  
		-\underbrace{d_{j,j-1}(\bm U^n)}_{=0} \underbrace{\frac{h_{j}^{n+1}}{h_{j}^{n}}}_{=1} \right)=h_j^n
	\end{align*}
	that is clearly satisfied by the equilibrium. The case with negative ${\mc K}^h_{j+\frac12}$ is analogous.
\end{proof}

\section{Numerical results}\label{sec5}
Here we test the expected performance of the presented mPDeC WB positivity-preserving A-WENO scheme on several numerical examples, and compare with the results of the corresponding
fifth-order DeC A-WENO scheme that also uses an analogous local characteristic decomposition in the interpolation. For the sake of brevity, we will refer to these as the WB and non-well-balanced (NWB) schemes, respectively. 
To ensure fifth-order accuracy in the mPDeC and DeC time evolutions, we use four Gauss-Lobatto nodes in mPDeC ($r = 3$) and four total iterations ($M = 4$).
In all of the examples, the mesh is uniform, the CFL number is taken as $\nu = 0.45$,  and $g = 9.812$ unless otherwise stated. 


\subsubsection*{Example 1---Convergence study}
In the first example, we test the order of accuracy for both the WB and NWB schemes. 
We follow the example from, e.g., \cite{Cheng19ME, Noelle07HOWB}, with $g = 1$, which has the following smooth initial data and bottom topography:
$$
	h(x,0) = 5+ e^{\cos(2\pi x)}, \quad
	q(x,0) = \sin(\cos(2\pi x)), \quad
	Z(x) = \sin^2(\pi x) , \qquad 
	x \in [0,1]
$$
with periodic boundary conditions. 
Since the exact solution for this initial data is not explicitly known, we form a reference solution from the NWB scheme on a mesh of $N = 12800$, and use this to compute numerical errors. 
We present the $L^1$ errors and convergence rates for both the WB and NWB schemes at the final time $t = 0.1$ in Table \ref{t:e1rates}, where it is clear that we obtain the expected experimental fifth order of accuracy for both approaches.

\begin{table}[ht]
	\begin{center}
		\caption{\sf Example 1: $L^1$ errors and numerical convergences rates at $t = 0.1$.}
	\begin{tabular}[c]{l c c c c c}
		\multicolumn{6}{l}{Well-Balanced Method}\\ \hline
		N & $\norm{h(\cdot,t) - h_{\rm ref}}_1$ & $h$ Order & & $\norm{q(\cdot,t) - q_{\rm ref}}_1$ & $q$ Order   \\ \hline
		25 &	 6.13e-04 &	 -- &	& 2.17e-03 &	 -- \\
		50 & 	 3.32e-05 &	 4.21 &	& 8.43e-05 &	 4.69 \\
		100 &	 1.17e-06 &	 4.83 &	& 2.93e-06 &	 4.85 \\
		200 &	 3.95e-08 &	 4.88 &	& 1.01e-07 &	 4.85 \\
		400 &	 1.13e-09 &	 5.13 &	& 2.97e-09 &	 5.10 \\
		800 &	 3.82e-11 &	 4.89 &	& 1.02e-10 &	 4.87 \\ \hline
		& & & & & \\ 
		\multicolumn{6}{l}{Non-Well-Balanced Method}\\ \hline
		25 &	 5.24e-04 &	 --   &	& 2.33e-03 & -- \\
		50 &	 2.97e-05 &	 4.14 &	& 8.80e-05 & 4.73 \\
		100 &	 9.47e-07 &	 4.97 &	& 2.63e-06 & 5.06 \\ 
		200 &	 3.10e-08 &	 4.93 &	& 8.46e-08 & 4.96 \\
		400 &	 1.13e-09 &	 4.78 &	& 3.05e-09 & 4.79 \\
		800 &	 4.59e-11 &	 4.62 &	& 1.23e-10 & 4.63 \\ \hline
	\end{tabular}\label{t:e1rates}
	\end{center}
\end{table}

\subsubsection*{Example 2---Moving water equilibria}
In this example, we explore the evolution of different nontrivial moving water equilbria---those corresponding with subcritical, transcritical, and supercritical flow over a continuous bump bottom topography that reads 
\begin{equation} \label{e:Zbump}
	Z(x) = \begin{cases}
		0.2 - 0.05 (x - 10)^2, & \textrm{if } 8 \leq x \leq 12, \\
		0, & \textrm{otherwise.}
	\end{cases}
\end{equation}
These steady state problems are classical tests for the three different speed-regime flows converging to a steady state, see, e.g., \cite{Noelle07HOWB, Vazquez99Improved}. Since our purpose is to maintain these moving equilibria solutions exactly, we consider the following three sets of initial and boundary data on the domain [0,25]:
\begin{itemize}
	\item[(a)] \textit{Subcritical flow} with 
	\begin{equation}\label{e:subICs}
		\begin{aligned}
			&\mc E_{\rm eq}(x) = 22.06605, \qquad &&q_{\rm eq}(x) = 4.42, \\
			& q(0,t)= 4.42, \qquad &&h(25, t) = 2.
		\end{aligned}
	\end{equation}
	
	\item[(b)] \textit{Transcritical flow} with 
	\begin{equation}\label{e:transICs}
		\begin{aligned}
			&\mc E_{\rm eq}(x) = \frac{3}{2} (1.53g)^{2/3}+0.2g, \qquad &&q_{\rm eq}(x) = 1.53, \\
			& q(0,t)= 1.53, \qquad &&\mc E(25, t) = \mc E_{\rm eq}(25).
		\end{aligned}
	\end{equation}
	
	\item[(c)] \textit{Supercritical flow} with 
	\begin{equation}\label{e:superICs}
		\begin{aligned}
			&\mc E_{\rm eq}(x) = 91.624, \qquad &&q_{\rm eq}(x) = 24, \\
			& q(0,t)= 24, \qquad &&h(25, t) = 2.
		\end{aligned}
	\end{equation}
\end{itemize}
Here, the equilibrium conditions for $h_{\rm eq}(x)$ are found by solving the cubic equation for $\mc E_{\rm eq}$ in  \eqref{e:Ej}. We plot the initial steady state conditions for $h_{\rm eq}(x)$ in Figure \ref{f:ME}.
Note that for the transcritical initial conditions, we select the subcritical root of this cubic before $x = 10$ (the peak of our bottom topography), and select the supercritical root after $x = 10$.
	
We compute the solution using a mesh with $N = 20$ and $N = 200$ points up to a final time of $t = 20$, and present the $L^1$ and $L^\infty$ numerical errors of the WB scheme for all 3 test cases in Table \ref{t:me_errors}. We see that, for all three cases, the proposed approach preserves the steady state conditions up to numerical round-off error. 


\begin{figure}[ht!]
	\centerline{
		\includegraphics[trim=1.9cm 0.7cm 1.8cm 1.0cm, clip, width=0.33\textwidth]{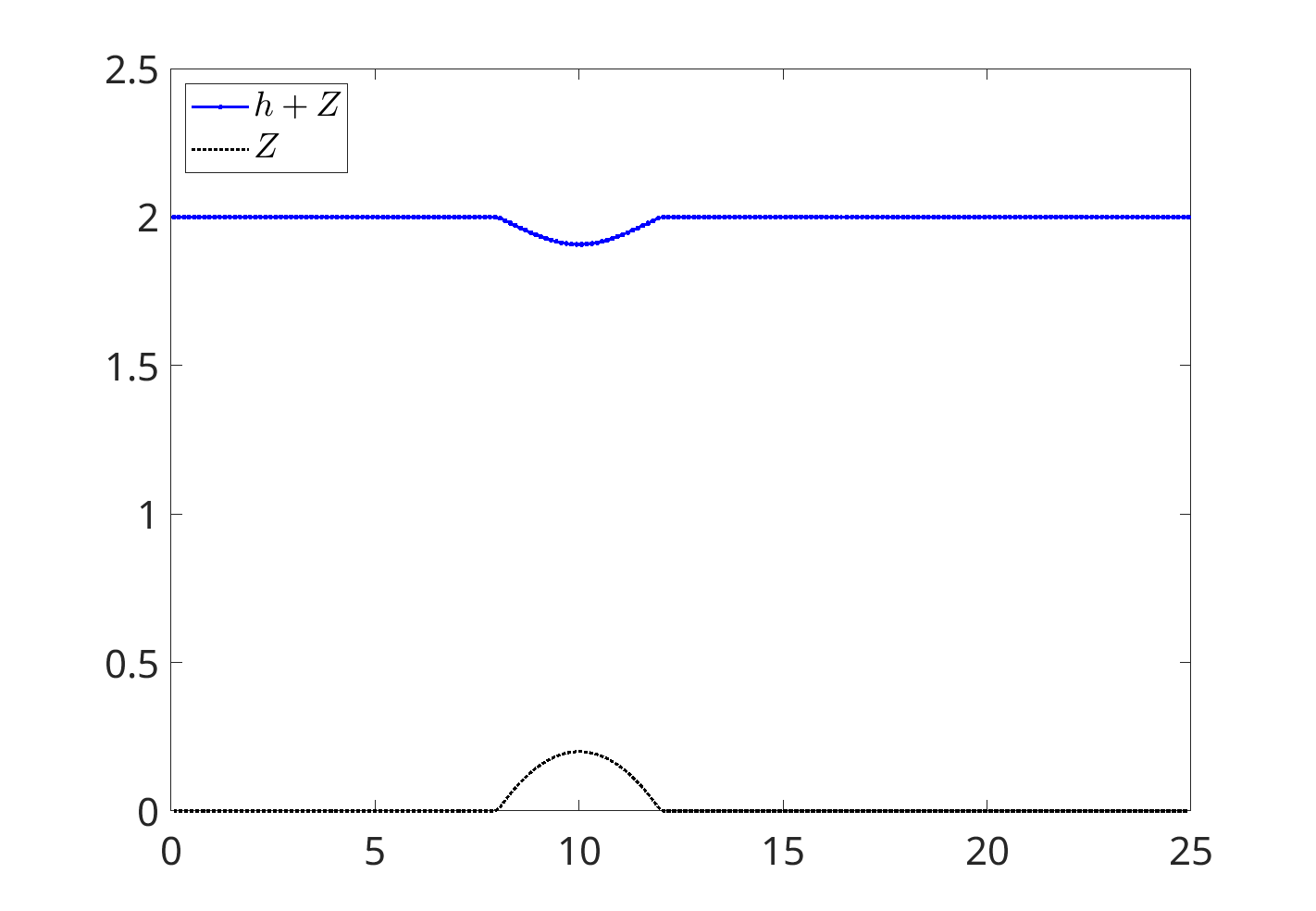}
		\includegraphics[trim=1.9cm 0.7cm 1.8cm 1.0cm, clip, width=0.33\textwidth]{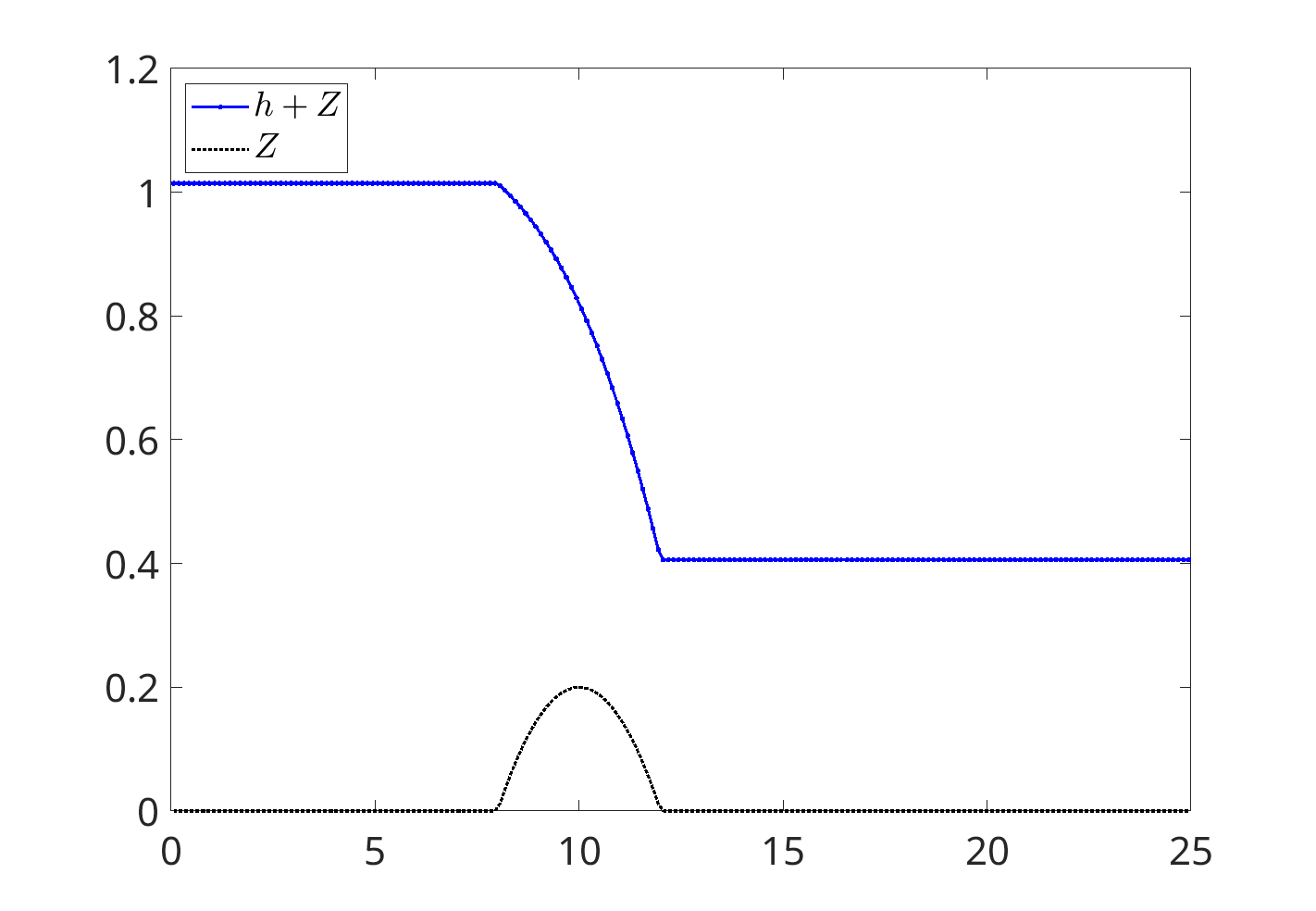}
		\includegraphics[trim=1.9cm 0.7cm 1.8cm 1.0cm, clip, width=0.33\textwidth]{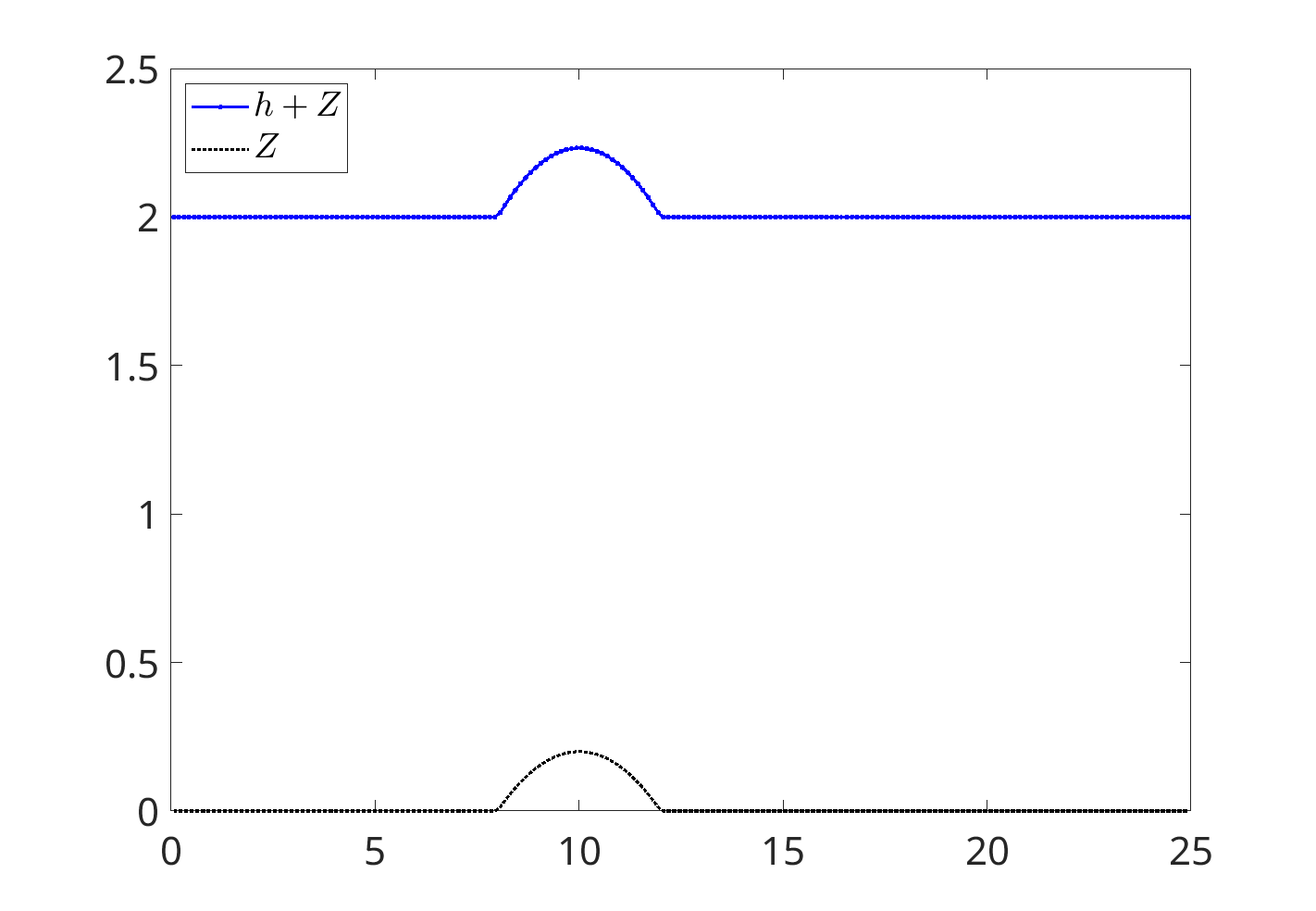}}
	\caption{\sf Example 2: (Left) subcritical, (middle) transcritical, and (right) supercritical moving water steady states. Notice the difference in vertical scales.}
	\label{f:ME} 
\end{figure}

\begin{table}[ht]
	\begin{center}
		\caption{\sf Example 2: $L^1$ and $L^\infty $ errors at $t = 20$ for the subcritical, transcritical, and supercritical flow equilibria.}
		\begin{tabular}[c]{l c c c c c}
			\multicolumn{6}{l}{(a) Subcritical Flow}\\ \hline
			N & $\norm{h(\cdot,t) - h_{\rm eq}}_1$ & $\norm{q(\cdot,t) - q_{\rm eq}}_1$ & & $\norm{h(\cdot,t) - h_{\rm eq}}_\infty$ & $\norm{q(\cdot,t) - q_{\rm eq}}_\infty$  \\ \hline
			20 &	1.94e-13 &	 3.40e-13 & & 1.71e-14 & 3.02e-14  \\
			200 &	5.70e-13 &	 2.13e-12 &	& 6.35e-14 & 2.13e-13 \\ \hline
			& & & & & \\ 
			\multicolumn{6}{l}{(b) Transcritical Flow}\\ \hline
			20 &	4.19e-14 &	 4.88e-14 & & 5.55e-15 & 3.55e-15  \\
			200 &	2.29e-13 &	 5.89e-13 & & 3.66e-14 & 5.35e-14 \\ \hline
			& & & & & \\ 
			\multicolumn{6}{l}{(c) Supercritical Flow}\\ \hline
			20 &	1.34e-13 &	 9.41e-13 & & 1.02e-14 & 7.82e-14  \\
			200 &	4.33e-13 &	 2.86e-12 & & 5.84e-14 & 4.87e-13 \\ \hline
		\end{tabular}\label{t:me_errors}
	\end{center}
\end{table}

\subsubsection*{Example 3---Perturbation of moving equilibria}
Next, we perturb the initial steady state conditions presented in Example 2 to examine the performance of the proposed WB scheme on a small perturbation problem. 
Namely, we introduce a perturbation to the initial fluid height data seen in \eqref{e:subICs}, \eqref{e:transICs}, and \eqref{e:superICs}:
\begin{equation*}\label{e:h_pert}
	h(x,0) = h_{\rm eq}(x) + \begin{cases}
		\eps_{\rm pert} & \textrm{if } 4.5 \leq x \leq 5.5, \\
		0 & \textrm{otherwise},
	\end{cases}
\end{equation*}
where $\eps_{\rm pert}$ denotes the size of the perturbation.

We first take $\eps_{\rm pert} = 10^{-3}$, as they did in, e.g., \cite{Cheng19ME}, to compare to the solution of the NWB scheme. 
For each of the three cases, we compute the WB and NWB solution using a mesh with $N = 100$ and $N = 1000$ points to a final time $t = 1$ for the subcritical and supercritical cases, and to a final time $t = 1.5$ for the transcritical flow example. 
We plot the perturbation from the moving equilibrium solutions $h(x,t) - h_{\rm eq}(x)$ for the subcritical, transcritical, and supercritical flow in Figures \ref{f:Sub_Pert}, \ref{f:Trans_Pert}, and \ref{f:Super_Pert}, respectively. 

For the subcritical and transcritical case, the NWB scheme fails to capture the correct solution on the course $N = 100$ mesh. 
Furthermore, even on the finer $N = 1000$ mesh, the NWB scheme produces oscillations that are outside the confines of the correct perturbation.
On the other hand, the WB solution is oscillation free and captures the correct structure of the solution, even on the coarse mesh.
For the supercritical case, both the NWB scheme and WB scheme capture the solution well for both meshes; we do see some sight oscillations in the NWB result, but nothing as significant as the previous two flow cases. 
The local characteristic decomposition plays a big role in reducing the oscillations in the NWB scheme to obtain a sound result.


To further demonstrate the capabilities of the WB scheme, we now reduce the perturbation size to $\eps_{\rm pert} = 10^{-5}$. 
We run the three flow cases to the same final times, and plot the perturbation from the corresponding steady states $h(x,t) - h_{\rm eq}(x)$ in Figure \ref{f:ME_Small_Pert}.
One can see that, even on a much smaller perturbation size, the WB scheme captures the correct solution structure on the coarse $N = 100$ mesh, and additionally remains essentially non-oscillatory when refining the grid.

\begin{figure}[ht!]
	\centerline{
		\includegraphics[trim=0.5cm 0.6cm 1.8cm 0.2cm, clip, width=0.49\textwidth]{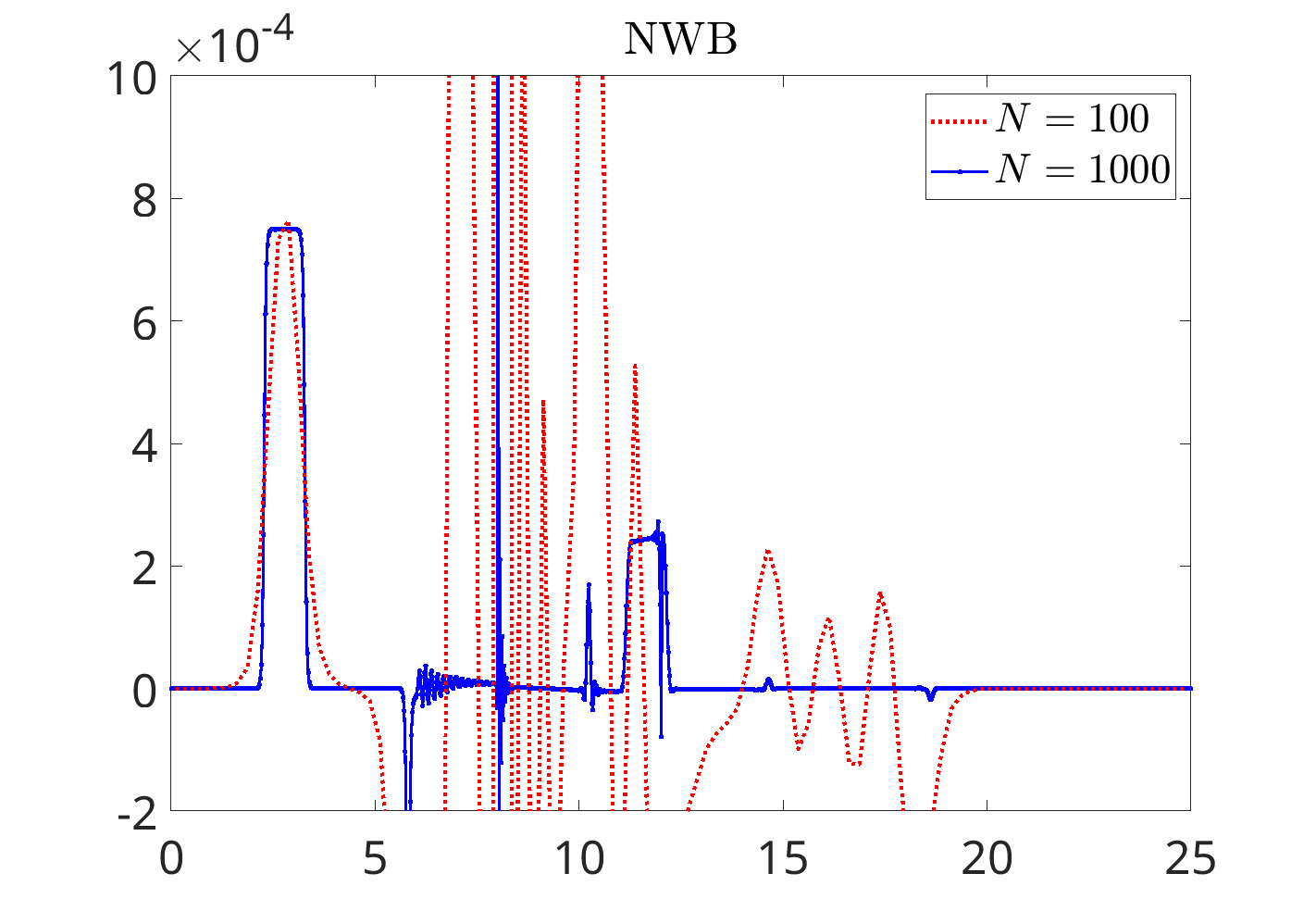}
		\includegraphics[trim=0.5cm 0.6cm 1.8cm 0.2cm, clip, width=0.49\textwidth]{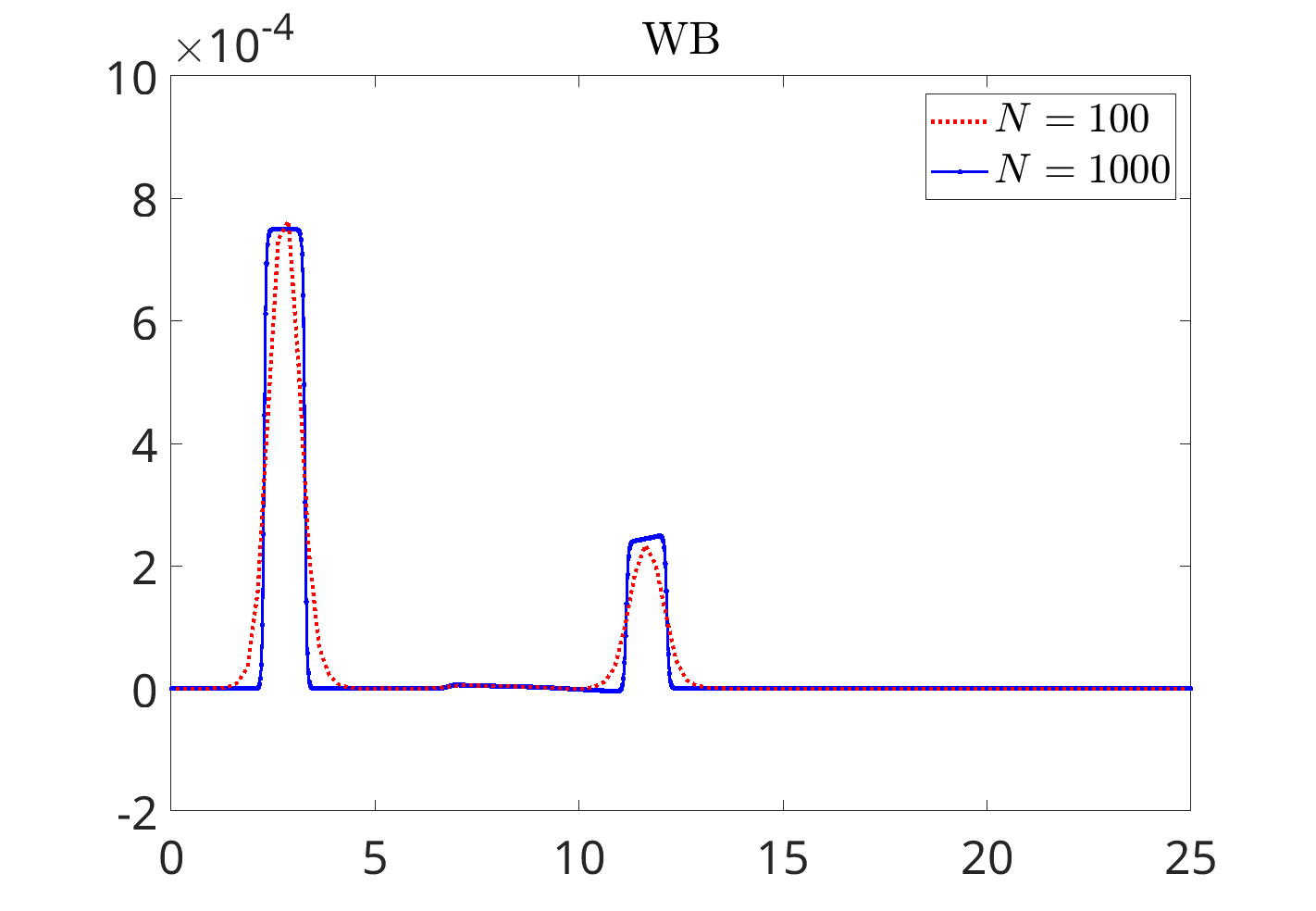}}
	\caption{\sf Example 3(a): The evolved perturbation $h(x,t) - h_{\rm eq}(x)$ of the subcritical flow steady state at time $t = 1$ for (left) the NWB scheme and (right) the WB scheme for $\eps_{\rm pert} = 10^{-3}$.}
	\label{f:Sub_Pert} 
\end{figure}

\begin{figure}[ht!]
	\centerline{
		\includegraphics[trim=0.5cm 0.6cm 1.8cm 0.2cm, clip, width=0.49\textwidth]{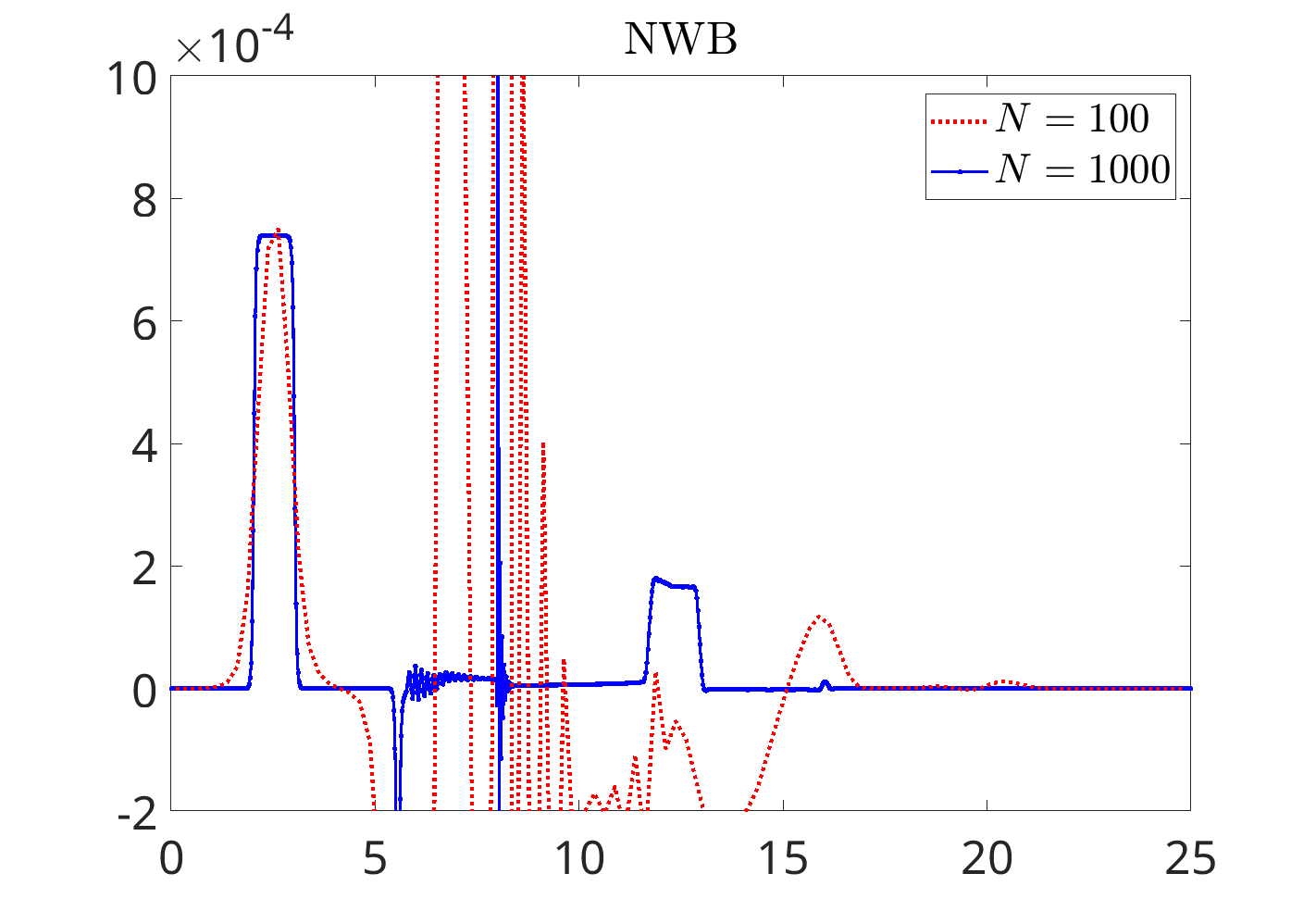}
		\includegraphics[trim=0.5cm 0.6cm 1.8cm 0.2cm, clip, width=0.49\textwidth]{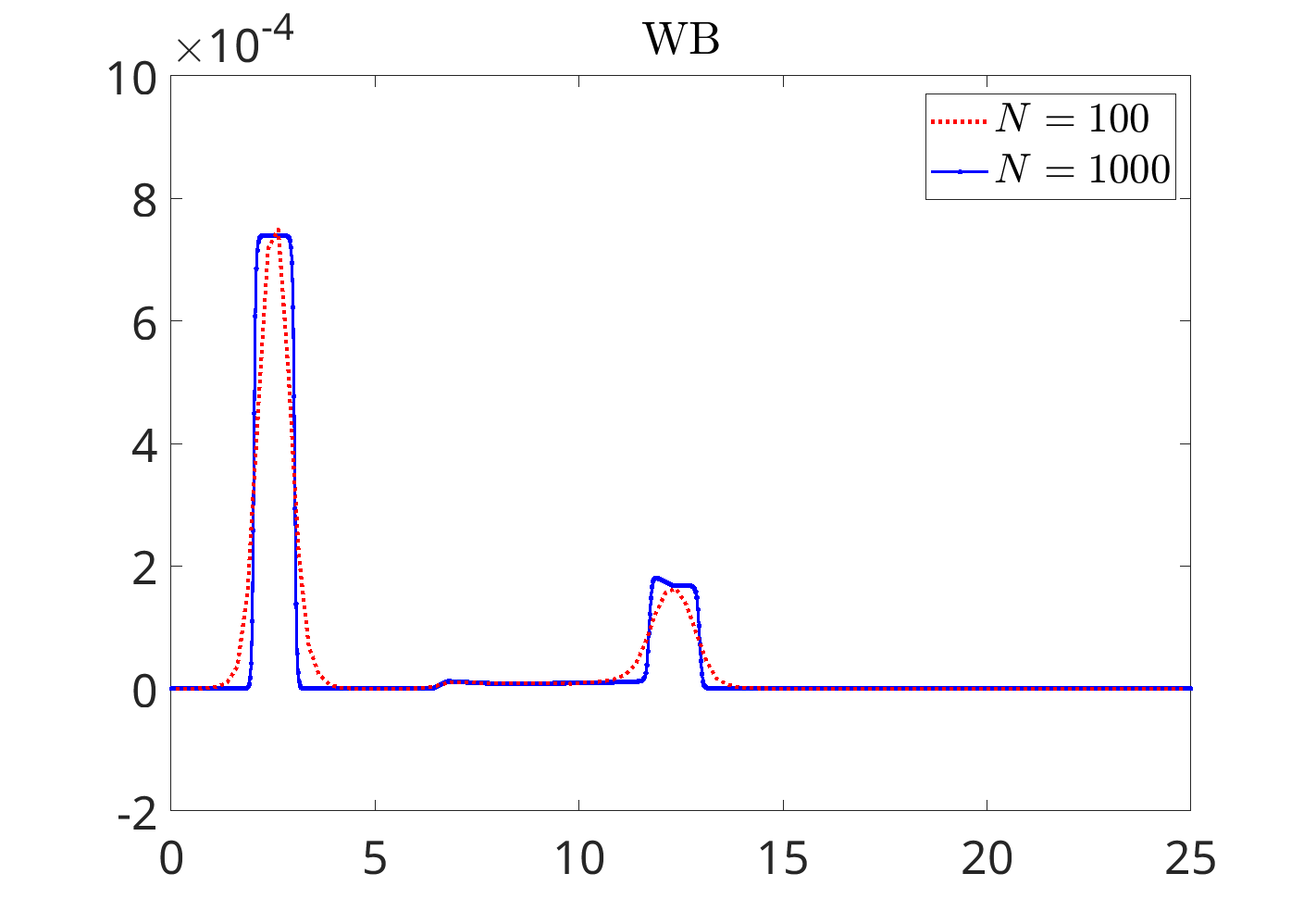}}
	\caption{\sf Example 3(b): The evolved perturbation $h(x,t) - h_{\rm eq}(x)$ of the transcritical flow steady state at time $t = 1.5$ for (left) the NWB scheme and (right) the WB scheme for $\eps_{\rm pert} = 10^{-3}$.}
	\label{f:Trans_Pert} 
\end{figure}

\begin{figure}[ht!]
	\centerline{
		\includegraphics[trim=0.5cm 0.6cm 1.8cm 0.2cm, clip, width=0.49\textwidth]{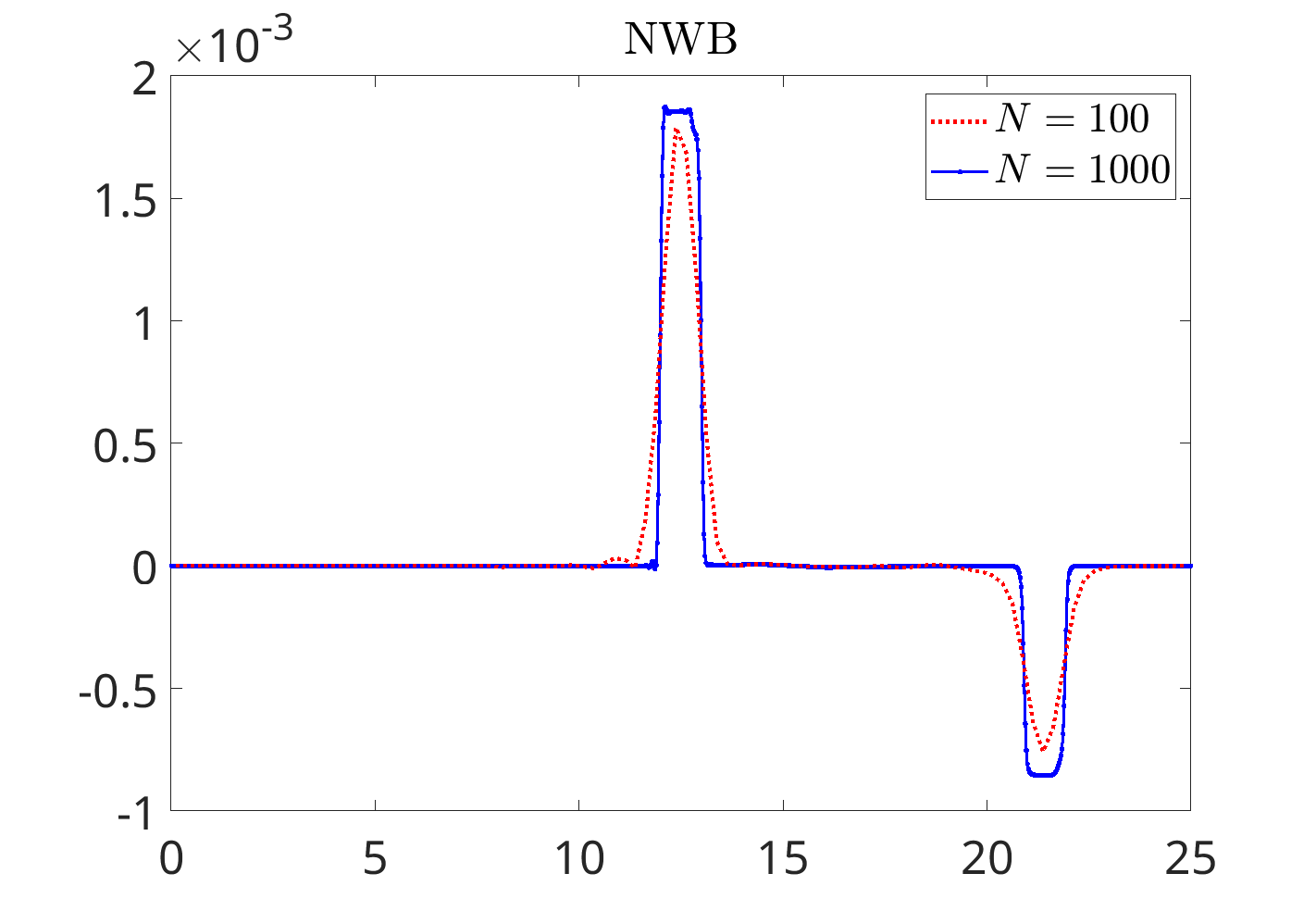}
		\includegraphics[trim=0.5cm 0.6cm 1.8cm 0.2cm, clip, width=0.49\textwidth]{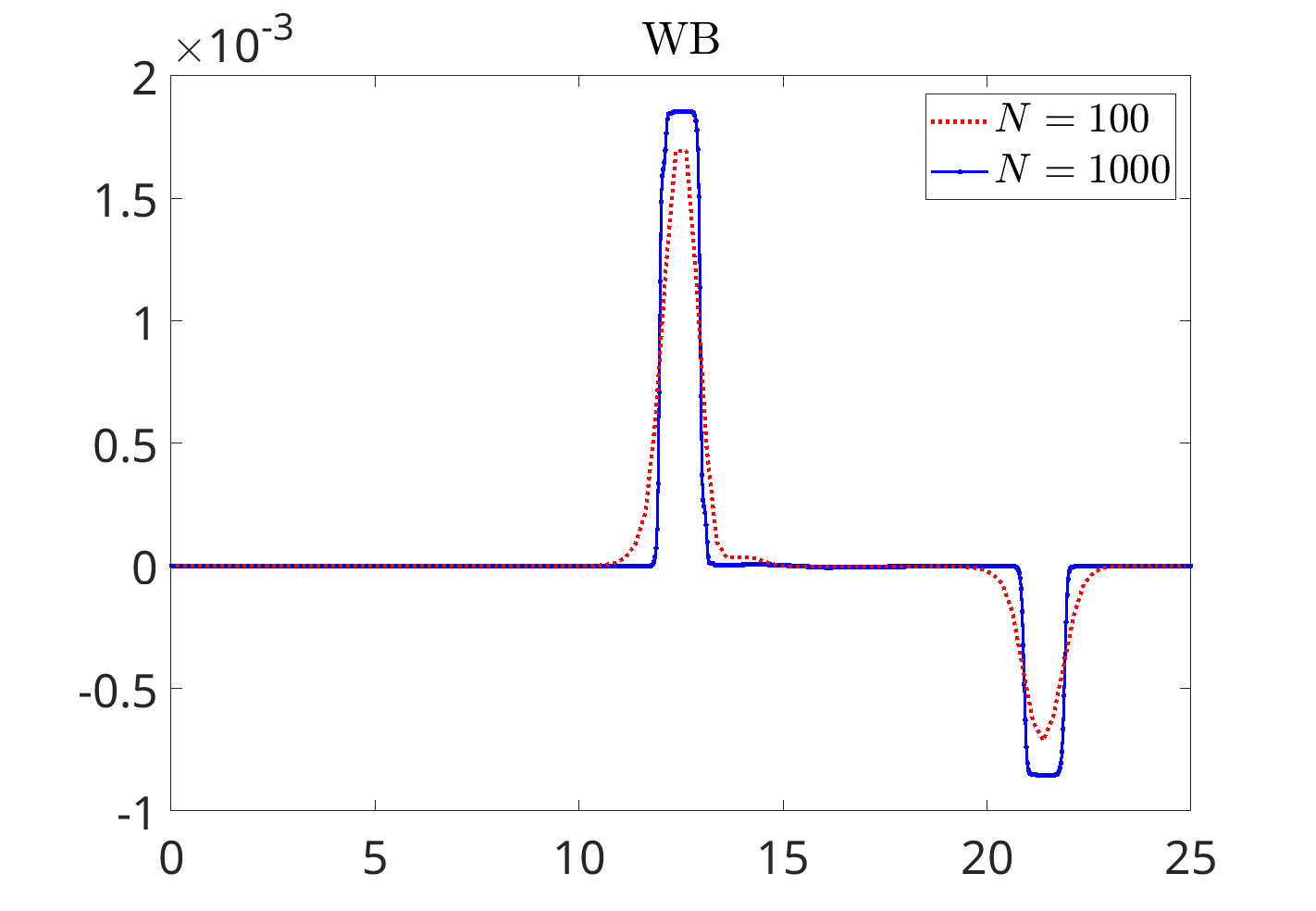}}
	\caption{\sf Example 3(c): The evolved perturbation $h(x,t) - h_{\rm eq}(x)$ of the supercritical flow steady state at time $t = 1$ for (left) the NWB scheme and (right) the WB scheme for $\eps_{\rm pert} = 10^{-3}$.}
	\label{f:Super_Pert} 
\end{figure}

\begin{figure}[ht!]
	\centerline{
		\includegraphics[trim=0.5cm 0.6cm 1.8cm 0.2cm, clip, width=0.49\textwidth]{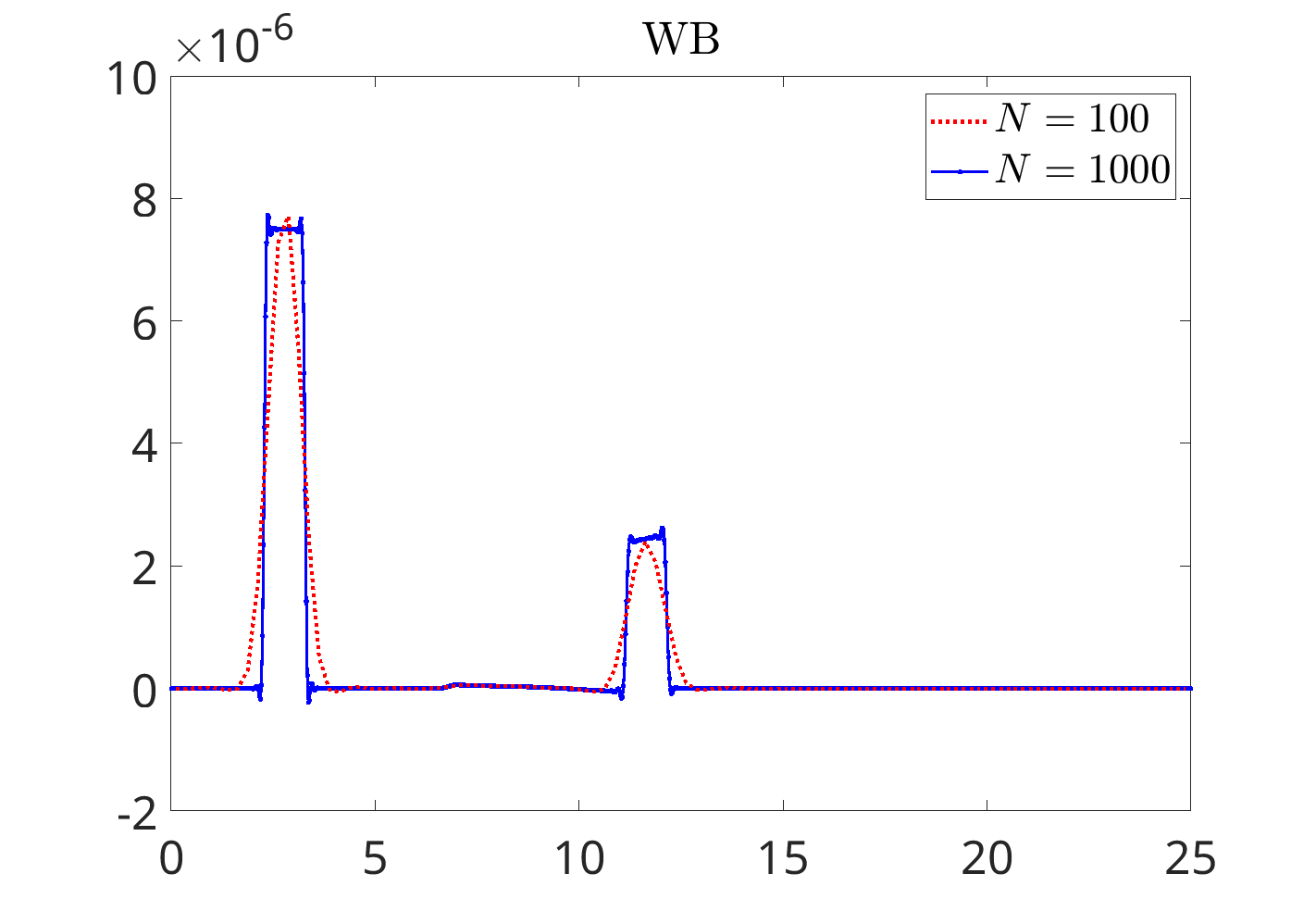}
		\includegraphics[trim=0.5cm 0.6cm 1.8cm 0.2cm, clip, width=0.49\textwidth]{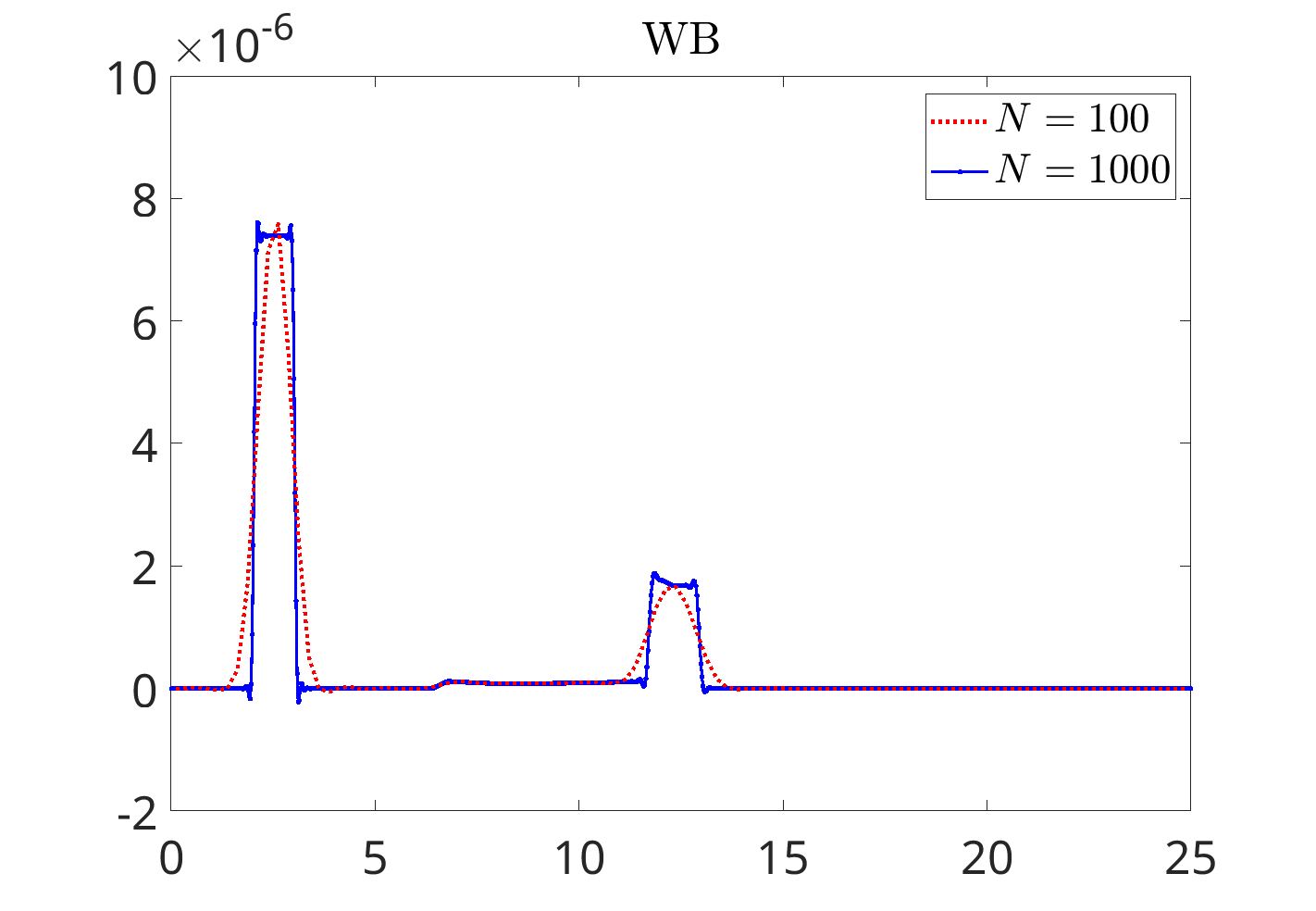}}
		\vspace*{7pt}
	\centerline{
		\includegraphics[trim=0.5cm 0.6cm 1.8cm 0.2cm, clip, width=0.49\textwidth]{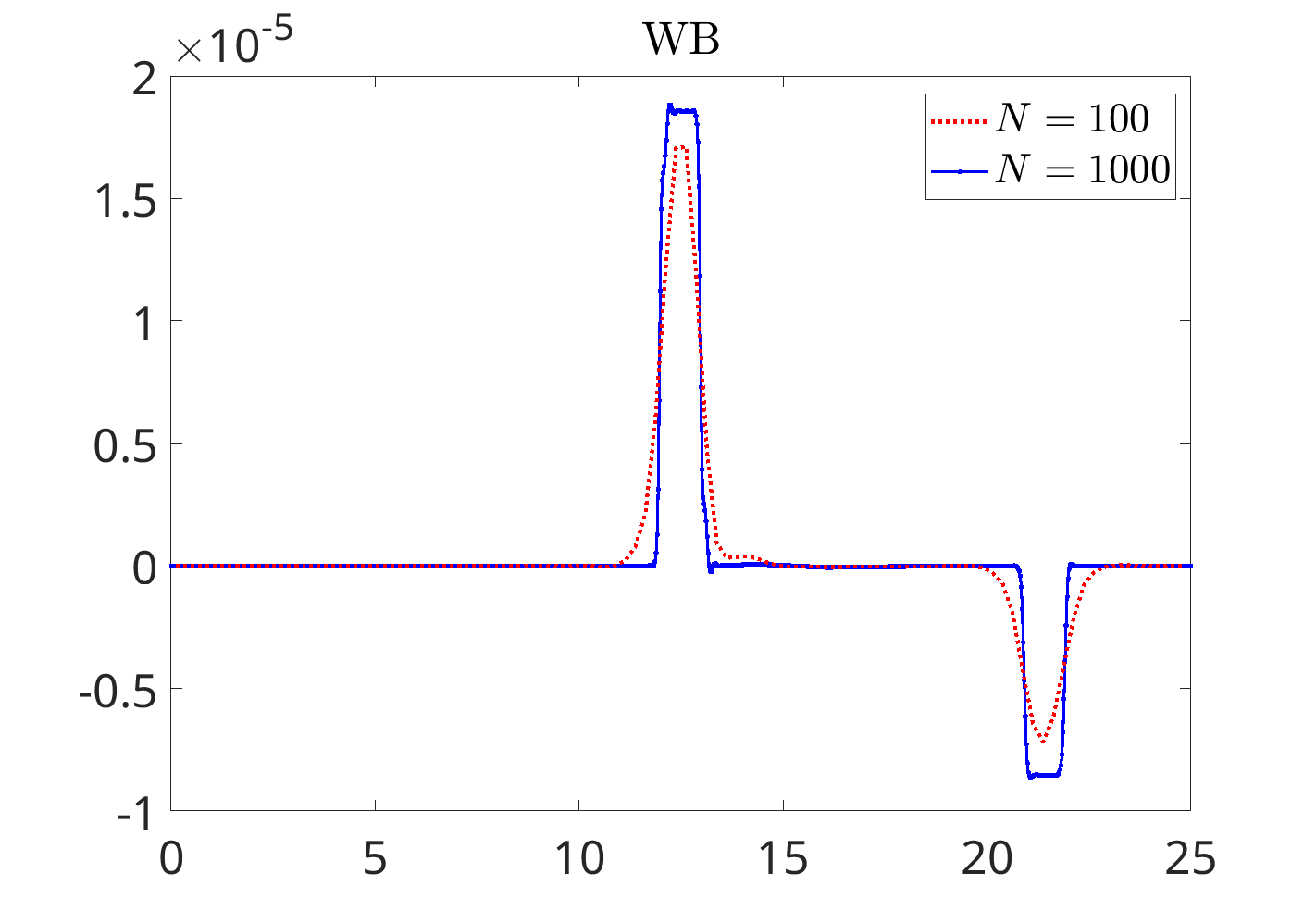}}
	\caption{\sf Example 3: The WB evolved perturbation $h(x,t) - h_{\rm eq}(x)$ of the (left) subcritical, (right) transcritical, and (bottom) supercritical flow at $t = 1$ and initial perturbation $\eps_{\rm pert} = 10^{-5}$.}
	\label{f:ME_Small_Pert} 
\end{figure}

\subsubsection*{Example 4---Wet/dry lake-at-rest steady state}
In this example, we continue our exploration of steady state solutions, now on a wet/dry lake-at-rest equilibria. This example, seen in e.g., \cite{Chertock22WB}, combines a lake-at-rest and ``dry lake" steady states, testing both the WB and positivity-preserving properties of the scheme. 
We consider the following initial data on the domain [0,20]:
\begin{equation}\label{e:LaR}
	h_{\rm eq}(x) = \max\set{0, 0.4-Z(x)}, \qquad u_{\rm eq}(x) = 0,
\end{equation}
with a two-hump bottom topography that reads 
\begin{equation} \label{e:Z2bump}
	Z(x) = \begin{cases}
		0.2 - 0.05 (x - 7)^2 & \textrm{if } 6 \leq x \leq 8, \\
		0.48 - 0.12 (x - 17)^2 & \textrm{if } 15\leq x \leq 19, \\
		0 & \textrm{otherwise.}
	\end{cases}
\end{equation}
Both the steady state \eqref{e:LaR} and the bottom topography \eqref{e:Z2bump} are presented in Figure \ref{f:LaR}.

We advance this steady state using $N = 100$ and $N = 200$ points to test a proper capturing of this lake-at-rest steady state up to machine precision. We present the $L^1$ and $L^\infty$ errors for $h$ and $q$ in Table \ref{t:lar_errors}, where it is clear that the WB scheme captures the steady state solution to round-off error accuracy. 
\begin{figure}[ht!]
	\centerline{
		\includegraphics[trim=1.9cm 0.7cm 1.8cm 1.0cm, clip, width=0.49\textwidth]{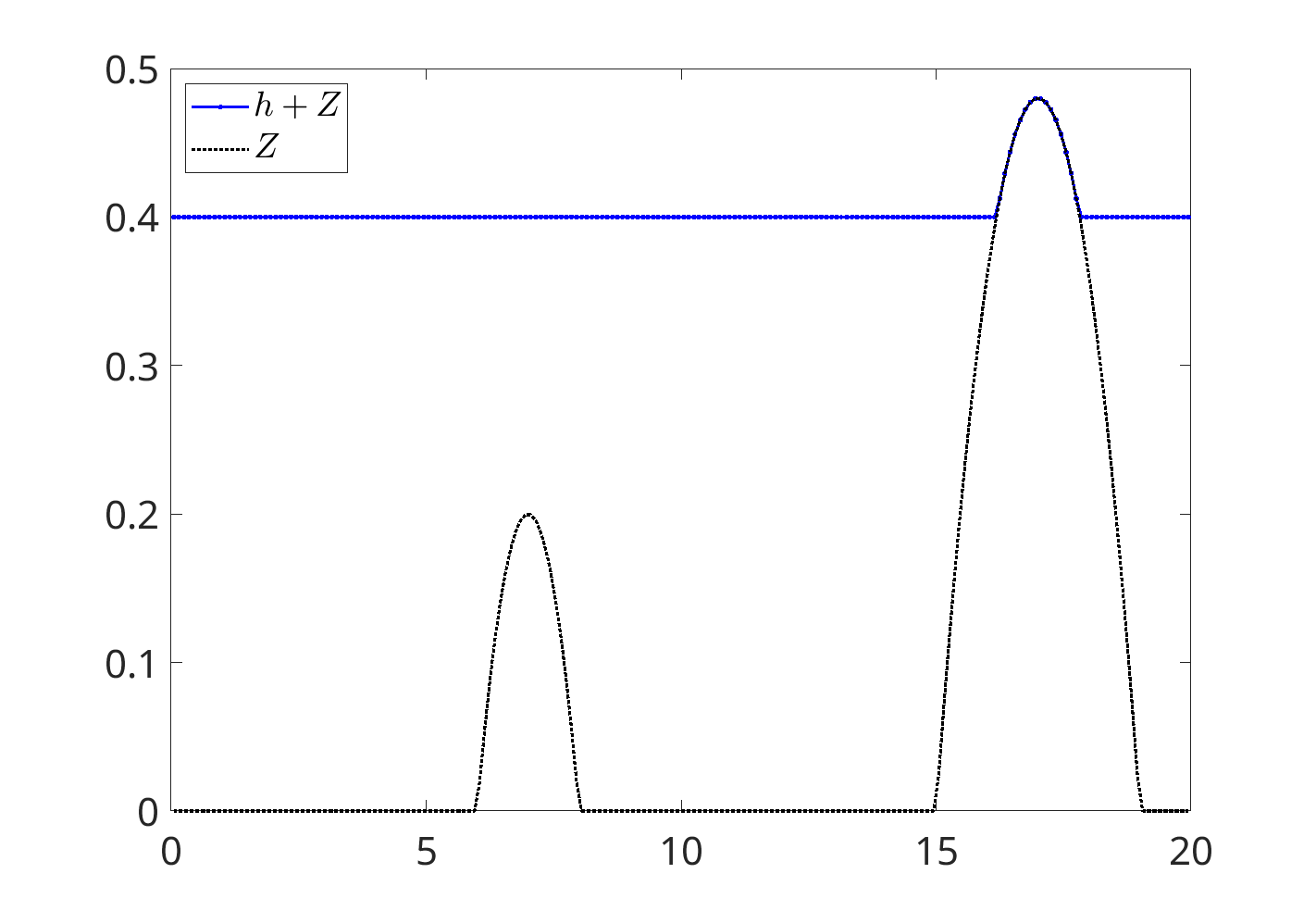}}
	\caption{\sf Example 4: The wet/dry lake-at-rest steady state.}
	\label{f:LaR} 
\end{figure}

\begin{table}[ht]
	\begin{center}
		\caption{\sf Example 4: $L^1$ and $L^\infty $ errors at $t = 20$ for the subcritical, transcritical, and supercritical flow equilibria.}
		\begin{tabular}[c]{l c c c c c}
			\multicolumn{6}{l}{Supercritical Inflow}\\ \hline
			N & $\norm{h(\cdot,t) - h_{\rm eq}}_1$ & $\norm{q(\cdot,t) - q_{\rm eq}}_1$ & & $\norm{h(\cdot,t) - h_{\rm eq}}_\infty$ & $\norm{q(\cdot,t) - q_{\rm eq}}_\infty$  \\ \hline
			100 &	4.15e-14 &  1.19e-13  & & 4.16e-15  & 5.87e-14  \\
			200 &	5.86e-14 &	1.19e-13 &	& 1.47e-14  & 1.04e-13 \\ \hline
		\end{tabular}\label{t:lar_errors}
	\end{center}
\end{table}

\subsubsection*{Example 5---Perturbation of wet/dry lake-at-rest}
As we did for the moving equilibria, we will also perturb the wet/dry lake-at-rest steady state and evaluate the evolution of the small traveling waves. We introduce a central perturbation to $h_{\rm eq}$ using 
\begin{equation*}\label{e:h_pert_lar}
	h(x,0) = h_{\rm eq}(x) + \begin{cases}
		\eps_{\rm pert} & \textrm{if } 9.5 \leq x \leq 10.5, \\
		0 & \textrm{otherwise},
	\end{cases}
\end{equation*}
where in this example, we follow, e.g., \cite{Chertock22WB}, and take an initial perturbation $\eps_{pert} = 10^{-4}$.

The lake-at-rest small perturbation is evolved on a mesh with $N = 400$ points to a final time $t = 4.8$---long enough to where the right-traveling wave interacts with the dry portion of the steady state. 
We present time snapshots of the evolution of $h(x,t) - h_{\rm eq}(x)$ in Figure \ref{f:LaR_pert}. 
The initial perturbation splits into two traveling waves, as seen in the $t = 0.6$ snapshot. 
At $t = 1.8$, the left-traveling wave is positioned over the smaller bump in the bottom topography, resulting in a slight magnitude increase and a small reflected wave traveling right. 
The left-moving wave returns to the original wave size of $5\times 10^{-5}$ after traveling over the smaller bump, as seen at $t = 3$.
At the same time, the right-traveling wave arrives at the larger bump of the bottom topography located at $x \in [15,19]$. 
After time $t = 3$, the left-traveling wave keeps its shape and changes minimally for the remainder of the simulation. 
The opposite can be said for the right-traveling wave, as this is when it greatly interacts with the larger bump of the bottom topography. 
At time $t = 3$, the right-going wave travels over the submerged portion of this larger bump, thus increasing the magnitude of the perturbation and releasing a small left-traveling reflected wave. 
At time $t = 3.6$, the right traveling wave reaches the dry portion of the domain at $x \approx 16.2$, resulting in a substantial increase of the perturbation magnitude. 
The water then flows down the originally-dry portion of the larger bump, now resulting in a left-propagating wave at time $t = 4.2$.
Finally, at $t = 4.8$, we see the reflected wave from the small bump and the water flowing off the large bump interacting, resulting temporarily in a visually smooth wave front at $x \approx 15$.

\begin{figure}[ht!]
	\centerline{
		\includegraphics[trim=1.9cm 0.7cm 1.8cm 0.2cm, clip, width=0.33\textwidth]{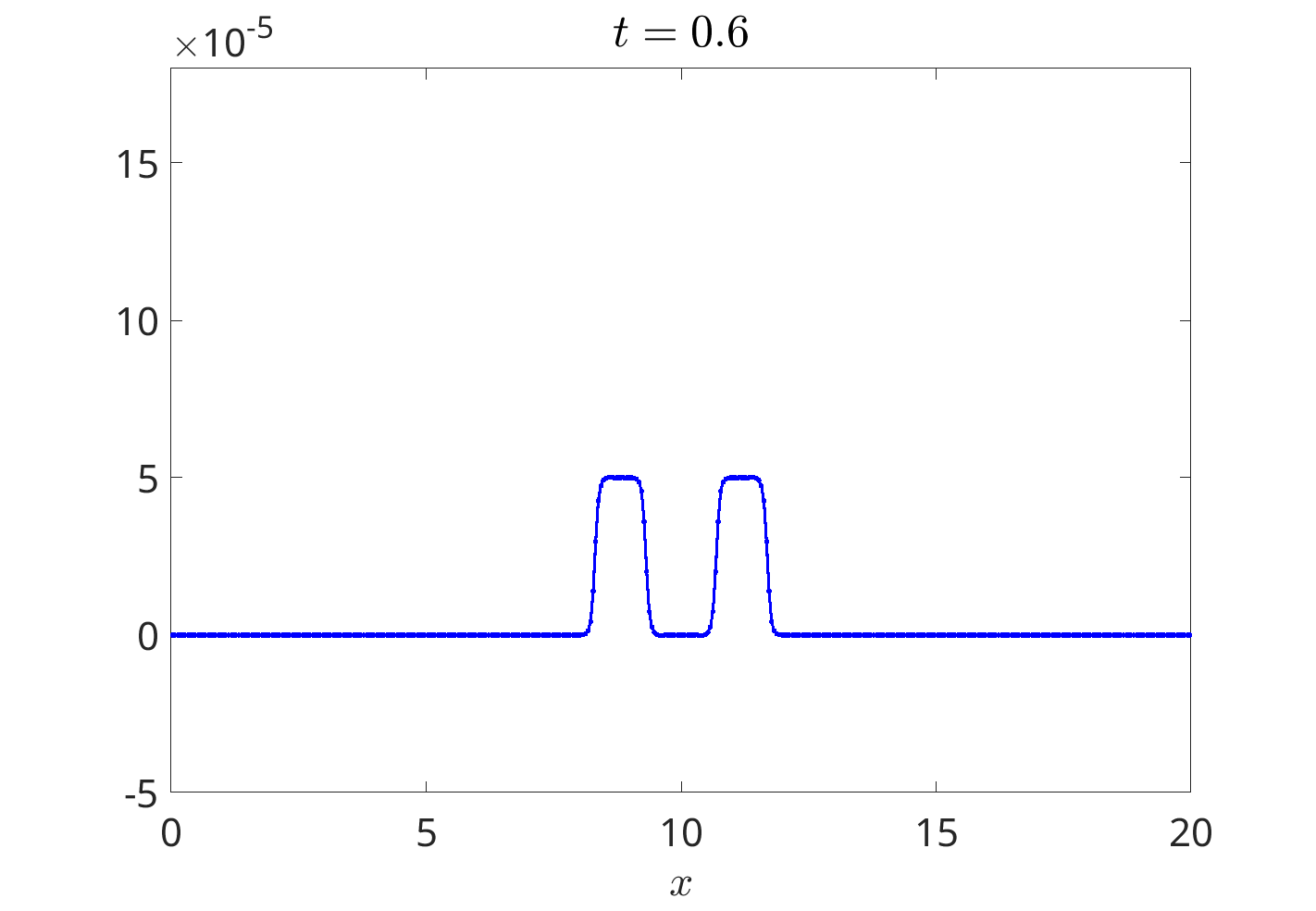}
		\includegraphics[trim=1.9cm 0.7cm 1.8cm 0.2cm, clip, width=0.33\textwidth]{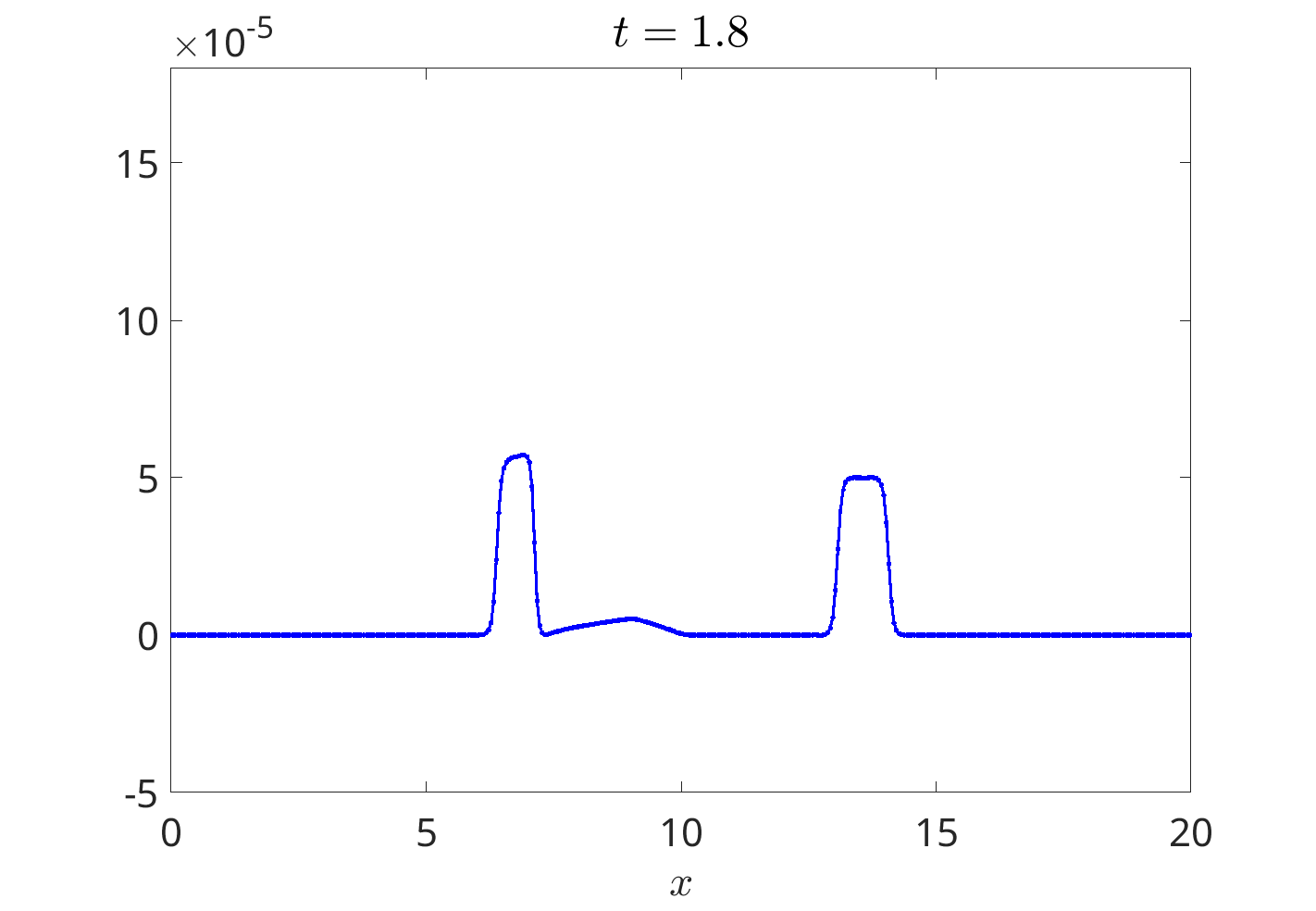}
		\includegraphics[trim=1.9cm 0.7cm 1.8cm 0.2cm, clip, width=0.33\textwidth]{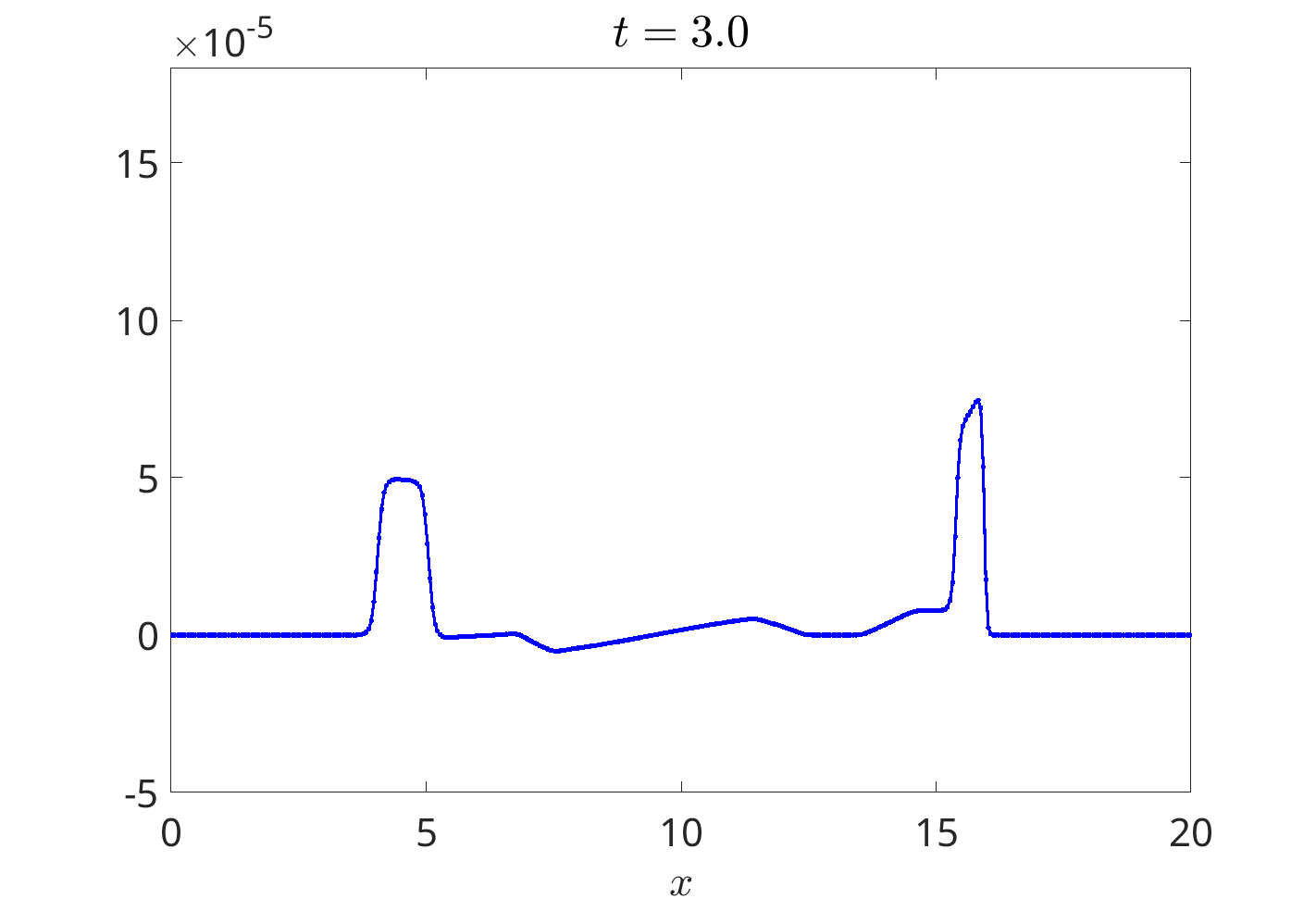}}
		
		\vspace*{7pt}
		\centerline{
		\includegraphics[trim=1.9cm 0.7cm 1.8cm 0.2cm, clip, width=0.33\textwidth]{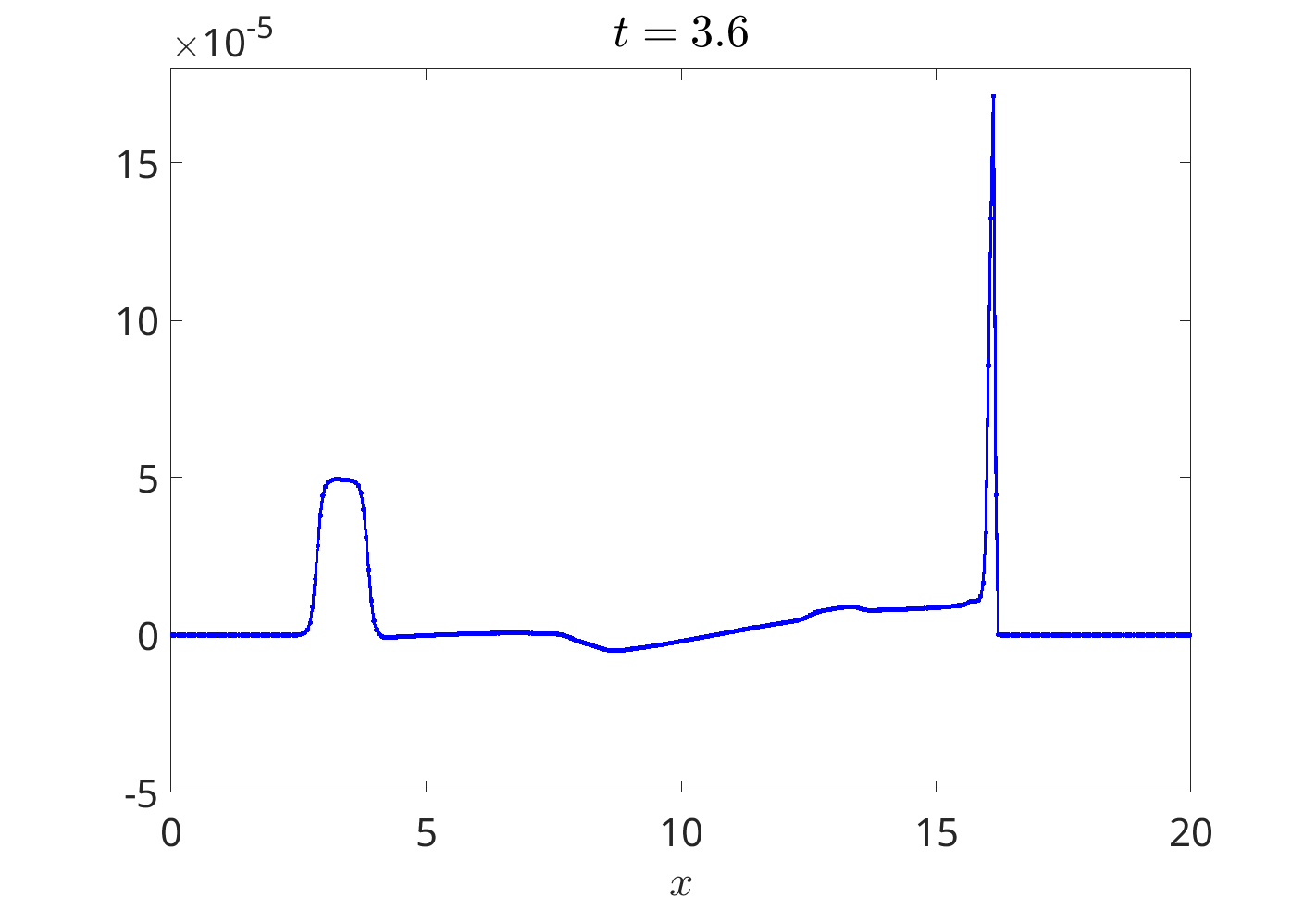}
		\includegraphics[trim=1.9cm 0.7cm 1.8cm 0.2cm, clip, width=0.33\textwidth]{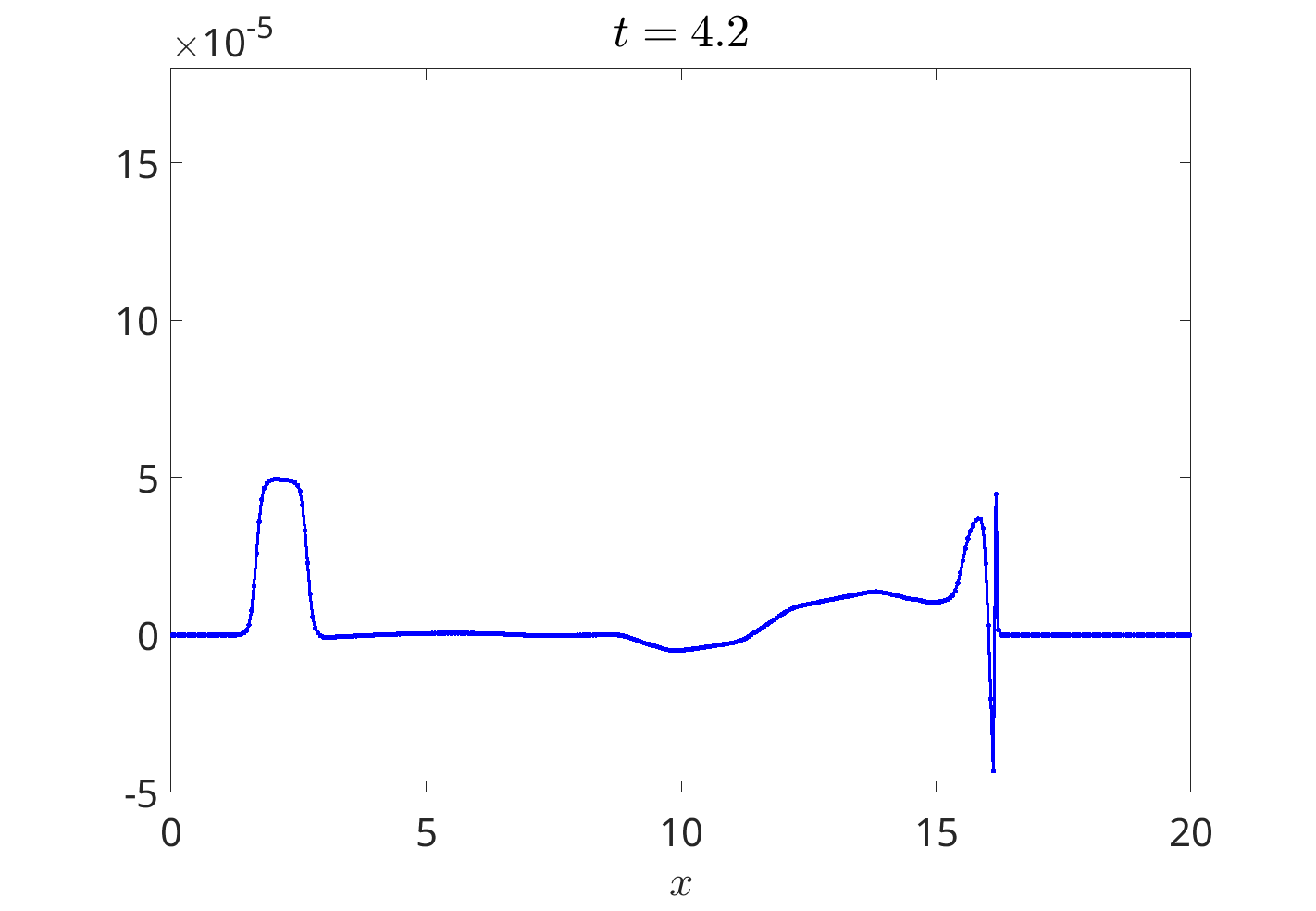}
		\includegraphics[trim=1.9cm 0.7cm 1.8cm 0.2cm, clip, width=0.33\textwidth]{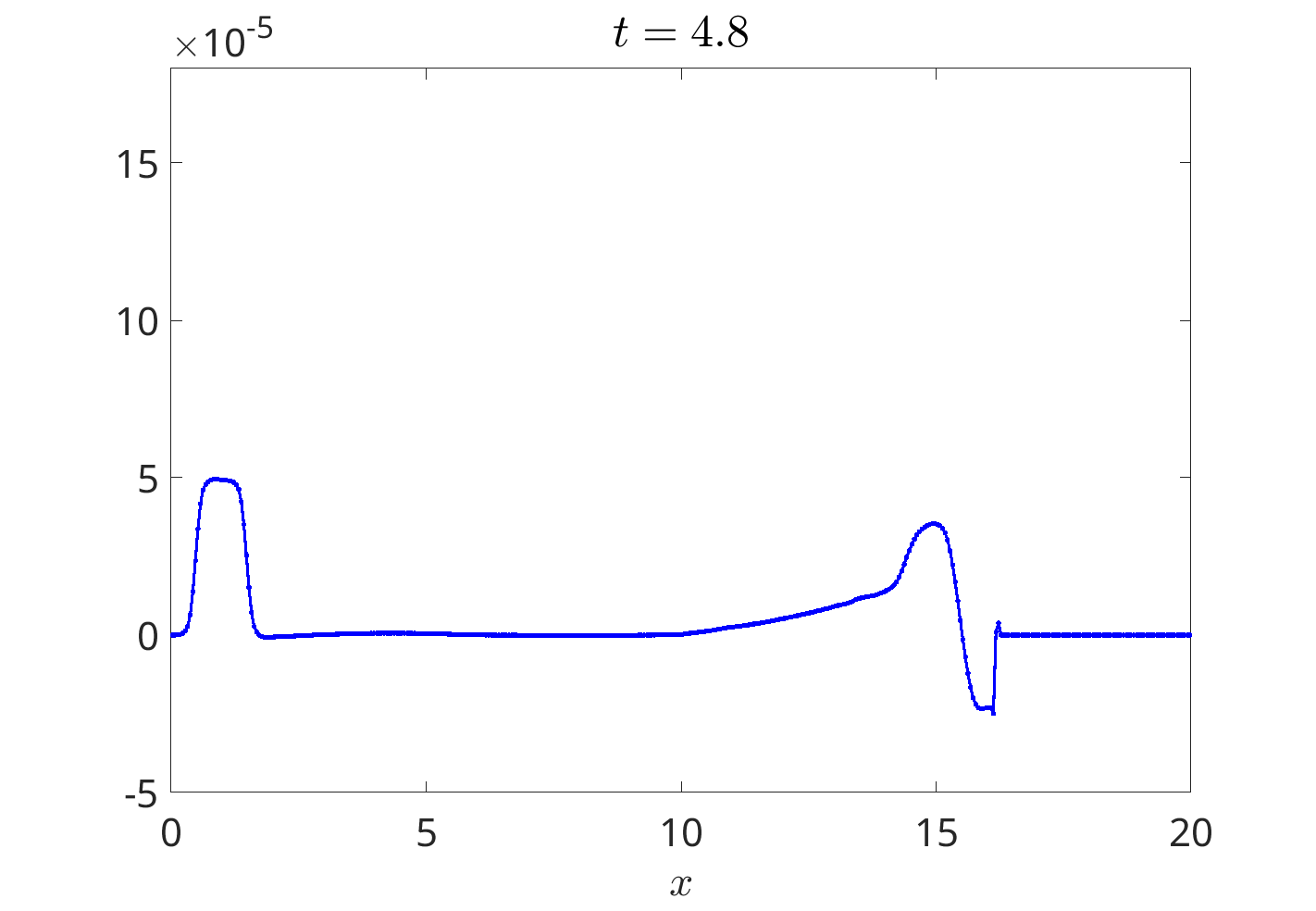}}
	\caption{\sf Example 5: The evolved perturbation $h(x,t) - h_{\rm eq}(x)$ of the wet/dry lake-at-rest steady state at various time slices. }
	\label{f:LaR_pert} 
\end{figure}

\subsubsection*{Example 6---Dam-break problem with discontinuous bottom topography}
In this example, we test a dam-break problem over a discontinuous rectangular bottom topography. 
We use a variant of the problem seen in, e.g., \cite{Noelle07HOWB,Xing05HO}; that is, we consider the following initial conditions on the domain $[0,10]$ with zero-extrapolation boundary conditions:

\begin{equation*}\label{e:h_rp}
	u(x, 0) = 0 \quad \textrm{and} \quad h(x,0) = \begin{cases}
		20-Z(x) & \textrm{if } x \leq 10, \\
		15-Z(x) & \textrm{if } x > 10,
	\end{cases}
\end{equation*}
where the bottom topography is defined as
\begin{equation*}\label{e:h_rp}
	Z(x) = \begin{cases}
		8 & \textrm{if } 3 \leq x \leq 7, \\
		0 & \textrm{otherwise}.
	\end{cases}
\end{equation*}

We evolve this Riemann problem over a discontinuous bathymetry using $N = 100$ and $ N = 1000$ points until a final time $t = 0.3$. 
We present time snapshots of the $h+Z$ solution at times $t = 0.15$ and $t = 0.3$ in Figure \ref{f:RP_DiscZ}. 
At time $t = 0.15$, the outward traveling waves have not yet reached the discontinuity fronts of the bottom topography, so the solution appears as a standard dam-break problem over a flat bathymetry. 
Shortly after, when the waves are crossing these spatial locations, each wave splits---one part continues to be outwardly transmitted, another part is reflected back inward, and the last part becomes a standing wave. 
By time $t = 0.3$, we see six different waves have formed. 
Notice that (i) the presented WB scheme gives well-resolved, non-oscillatory solutions to this discontinuous problem; (ii) the $N = 100$ solution matches well to the $N = 1000$ solution; and  (iii) we do not see unphysical numerical artifacts at the bottom topography discontinuities.

\begin{figure}[ht!]
	\centerline{
		\includegraphics[trim=0.5cm 0.6cm 1.8cm 0.2cm, clip, width=0.45\textwidth]{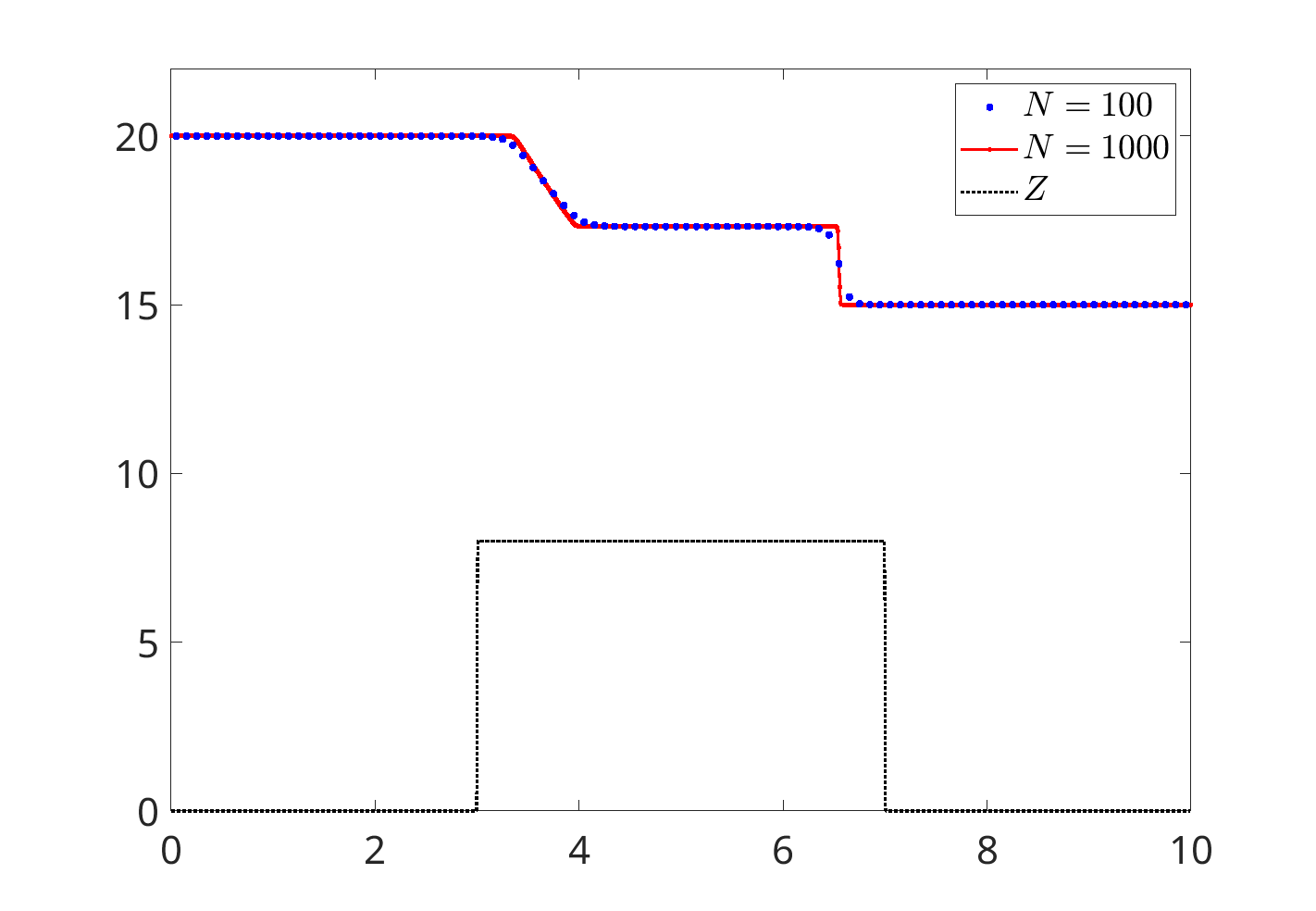}
		\includegraphics[trim=0.5cm 0.6cm 1.8cm 0.2cm, clip, width=0.45\textwidth]{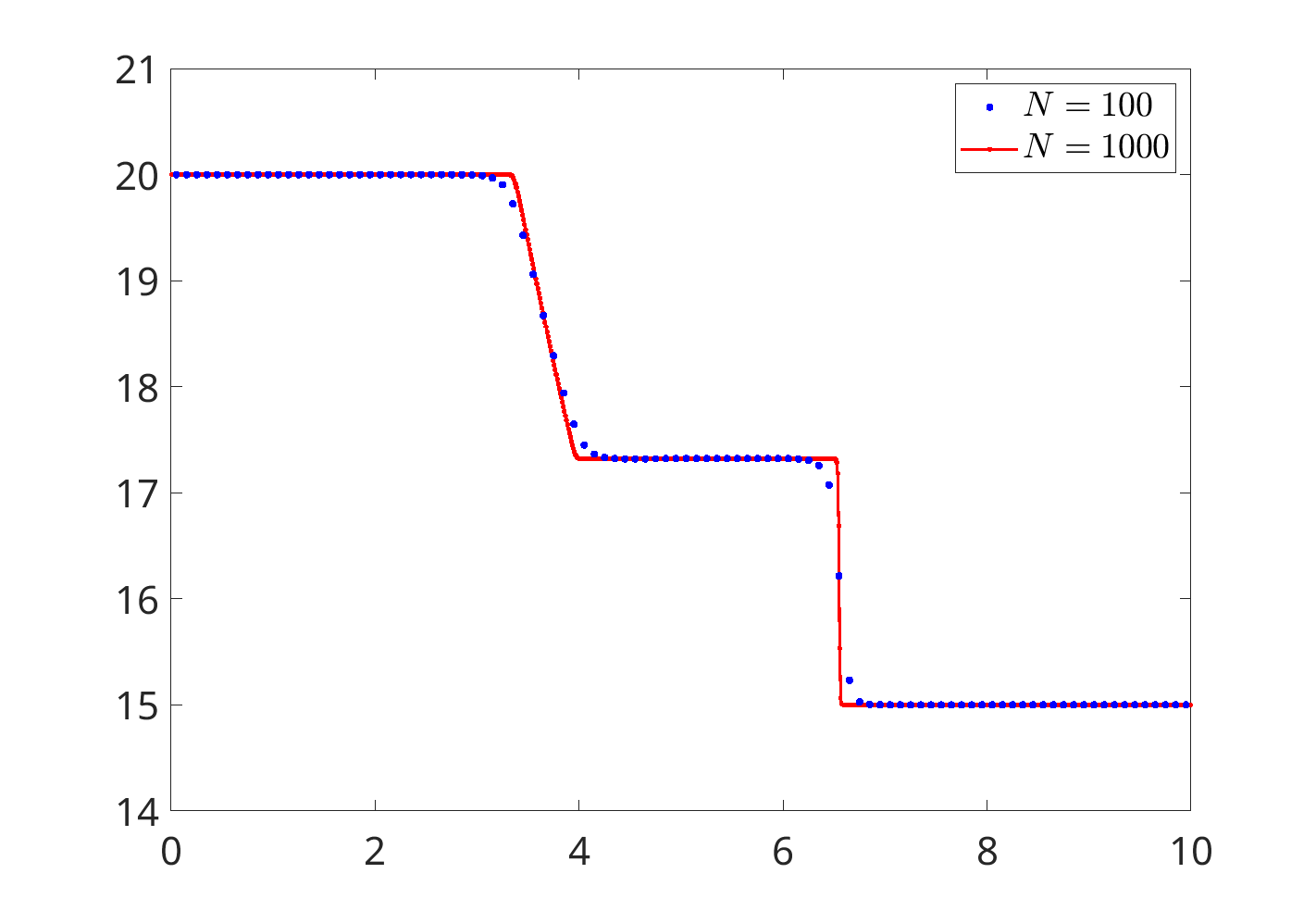}}
		
	\vspace*{7pt}
	\centerline{
		\includegraphics[trim=0.5cm 0.6cm 1.8cm 0.2cm, clip, width=0.45\textwidth]{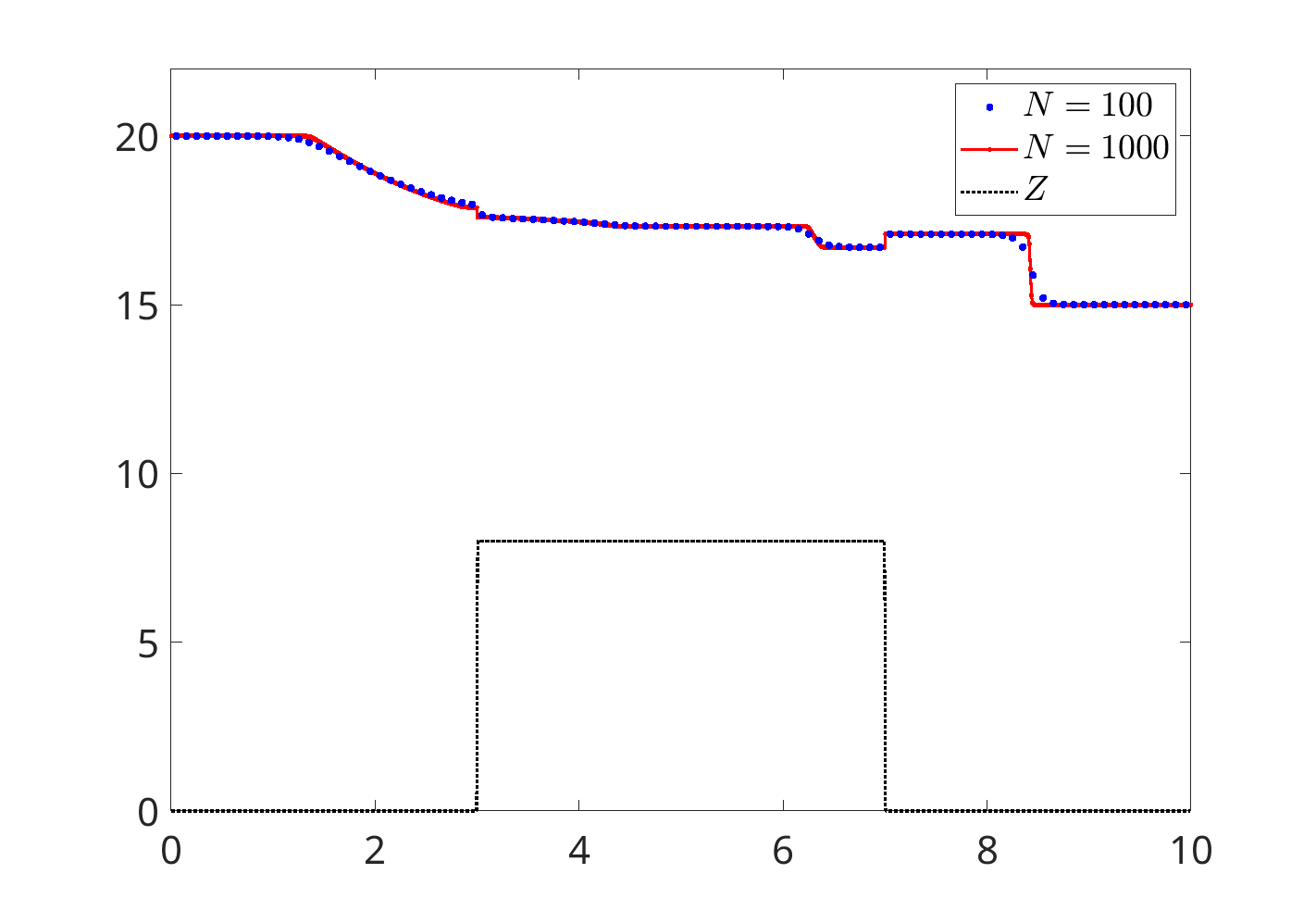}
		\includegraphics[trim=0.5cm 0.6cm 1.8cm 0.2cm, clip, width=0.45\textwidth]{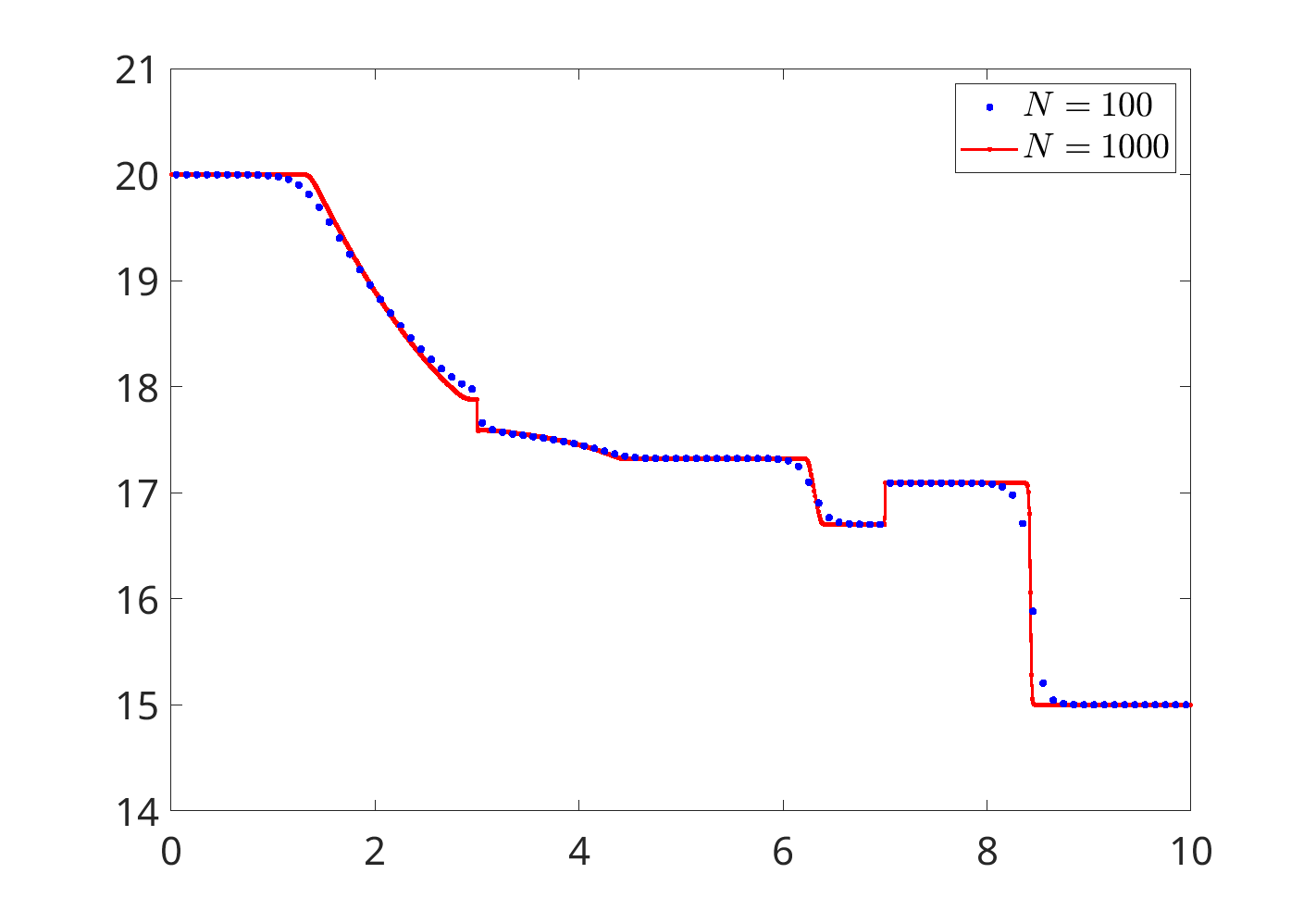}}
	\caption{\sf Example 6: the fluid height $h+Z$ at times $t = 0.15$ (top row) and $t = 0.3$ (bottom row). The left column includes a depiction of the bottom topography, while the right column zooms in on solely $h+Z$.}
	\label{f:RP_DiscZ} 
\end{figure}

\subsubsection*{Example 7---Wet/dry Riemann problem, convergence to moving equilibria}
In the final test, we look at the evolution of a Riemann problem over a dry front to (i) further test the positivity-preserving property; and (ii) present that the proposed method does indeed preserve a moving equilibria state once reached. Following \cite{Cheng19ME}, we modify the initial data in \eqref{e:superICs} (i.e., the supercritical moving equilibria state) to instead be the following Riemann problem on the domain [0,25]: 
\begin{equation*}
	h(x,0) = \begin{cases}
		2 & \textrm{if } x < 5, \\
		0 & \textrm{otherwise,}
	\end{cases}, \qquad 
	q(x,0) = \begin{cases}
		24 & \textrm{if } x < 5, \\
		0 & \textrm{otherwise.}
	\end{cases}
\end{equation*}
Here, we take the same bottom topography \eqref{e:Zbump} as the moving equilibria examples, and take inflow and outflow boundary conditions on the left and right, respectively.

First, we evolve this problem to time $t = 1$ on a mesh with $N = 100$ and $N = 1000$ to view the evolution of the Riemann problem; these results are presented in Figure \ref{f:wetdry_RP}. 
We see the wave front, which at this time, appears as a shock meeting a rarefaction wave. 
We can also see that the first half of the domain matches exactly that from the supercritical moving equilibria, as seen in Figure \ref{f:ME}. 
We additionally run this example and obtain solution snapshots at $t = 5$ and $t = 10$. 
By $t = 5$, the solutions matches that of the supercritical moving equilibria, and thus $t = 10$ should remain at this state as well. 
We present the errors between the $t = 5$ and $t = 10$ solutions in Table \ref{t:eq_conv_errors} for $N = 100$ and $N = 200$, which demonstrates that we are preserving the newly formed equilibrium state, to round-off error, with the proposed WB scheme.
\begin{figure}[ht!]
	\centerline{
		\includegraphics[trim=0.5cm 0.6cm 1.8cm 0.2cm, clip, width=0.49\textwidth]{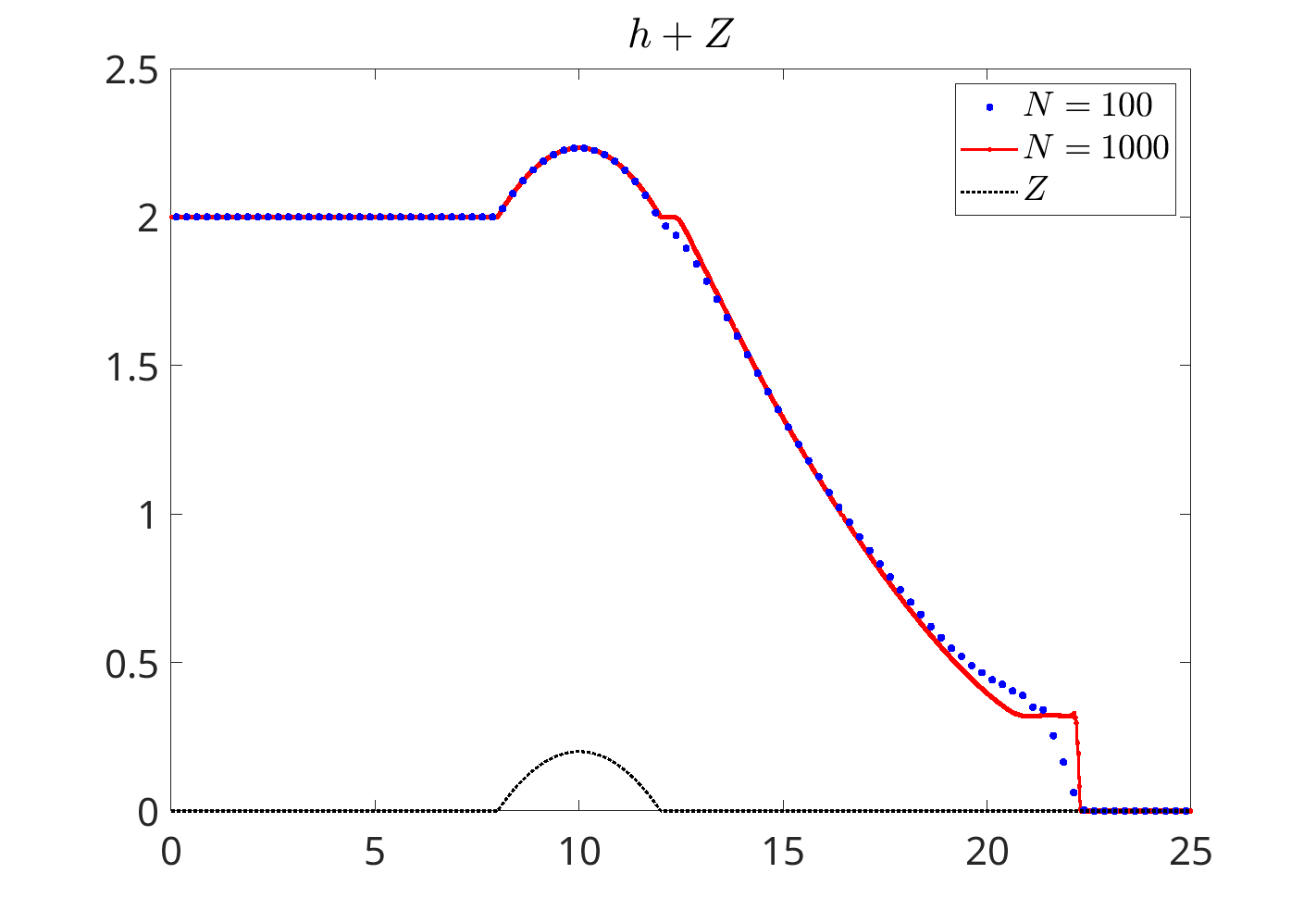}
		\includegraphics[trim=0.5cm 0.6cm 1.8cm 0.2cm, clip, width=0.49\textwidth]{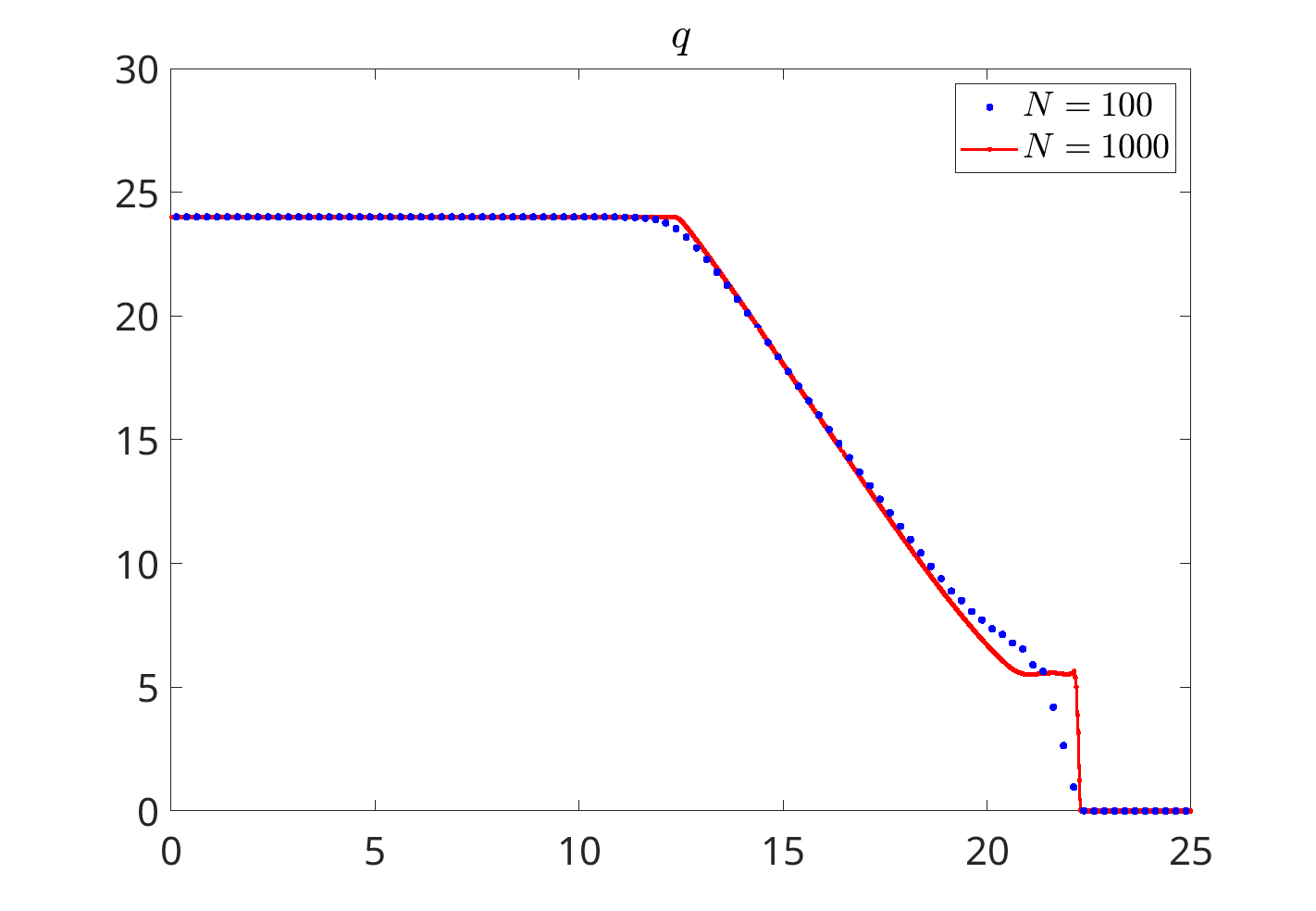}}
	\caption{\sf Example 7: Fluid level $h+Z$ (left) and momentum $q$ (right) at $t = 1$.}
	\label{f:wetdry_RP} 
\end{figure}


\begin{table}[ht]
	\begin{center}
		\caption{\sf Example 7: $L^1$ and $L^\infty $ errors between the solution at $t = 10$ and $t = 5$.}
		\begin{tabular}[c]{l c c c c c}
			\multicolumn{6}{l}{Wet/Dry Lake-at-Rest}\\ \hline
			N & $\norm{h(\cdot,10) - h(\cdot,5) }_1$ & $\norm{q(\cdot,10)  - q(\cdot,5) }_1$ & & $\norm{h(\cdot,10)  - h(\cdot,5) }_\infty$ & $\norm{q(\cdot,10) - q(\cdot,5) }_\infty$  \\ \hline
			100 &	 7.85e-13 &	 4.48e-12 & &	 6.44e-14 &	 4.44e-13 \\
			200 &	 8.92e-13 &	 6.25e-12 & &	 8.68e-14 &	 6.82e-13 \\ \hline
		\end{tabular}\label{t:eq_conv_errors}
	\end{center}
\end{table}

\section{Discussion} \label{sec6}

In this work, we presented a new high-order positivity-preserving fully WB scheme for 1-D  shallow water flow. 
The proposed method uses a flux-globalization approach to be WB in the sense that it preserves the 1-D wet/dry and moving water equilibria of the system.
We reduced oscillations within the high-order A-WENO interpolation using the recently introduced local characteristic equilibrium variable decomposition \cite{chu24local}. 
We ensured positivity of the fluid thickness through the mPDeC time-stepping.
This evolution approach uses nonlinear weighting of terms within a production-destruction framework to form a linearly implicit update---consequently ensuring positivity without any additional restriction on the time step (as long as the fluid thickness is positive within the spatial update). 
With the help of the quasi-conservative form used within the flux-globalization approach, we are able to format the semi-discretization of the shallow water system in this production-destruction framework to use this time update.
To preserve positivity in the spatial discretization, we use the Zhang-Shu limiter.
To additionally ensure robustness of the local characteristic decomposition in dry regions, we introduce a switching indicator; that is, we use A-WENO interpolation of the local characteristic equilibrium variables in wet regions, and interpolate just the equilibrium variables if a near-dry region is detected. 
Finally, we present several numerical results demonstrating robustness, high-order accuracy, the WB property, and the positivity-preserving property---all of which remain essentially non-oscillatory.

While we present this specific WB A-WENO positivity-preserving scheme on the shallow water equations using the mPDeC time evolution, we expect this method to widely generalize in two ways. 
First, while we did not present different systems in this work, the entire method in \S\ref{sec2} is written for general nonconservative hyperbolic systems of balance laws, and can be applied analogously to such systems {when the positive quantities can be framed into a PDS}.
Second, we see that as long as the global flux is foundationally used within the semi-discretization (as it is in flux-globalization-based approaches), then one can use the mPDeC time evolution to ensure unconditional positivity. 
Thus, one can now apply---to general balance laws---the Zhang-Shu positivity limiter without the strong time step restriction. \\

\noindent
\textbf{Acknowledgments:}
Philipp \"Offner was supported by the German Research Foundation (DFG)  within SPP 2410, project 
525866748 (OE 661/5-1) and under the personal grant  520756621 (OE 661/4-1). D.T.
is a member of the INdAM GNCS group in Italy and was supported by the Ateneo Sapienza
project 2024 “Advanced Computational Methods for Real-World Applications: Data-Driven
Models, Hyperbolic Equations and Optimal Control”. \\

\noindent
\textbf{Authorship contribution statement:}
\textbf{Philipp \"Offner:} Conceptualization, Methodology, Funding acquisition, Project administration, Writing---original draft, Writing---review and editing.
\textbf{Michael Redle:} Conceptualization, Methodology, Formal Analysis, Software, Validation, Visualization,  Writing---original draft, Writing---review and editing.
\textbf{Davide Torlo:} Conceptualization, Methodology, Project administration, Writing---review and editing.

\appendix
\section{1-D Affine-invariant WENO-Z interpolation}\label{secA}
We review the Affine-invariant WENO-Z interpolation, introduced in, e.g., \cite{Don22SI_WENO,Li23SI_WENO,Wang22AI_WENO}, in this appendix. 
Consider a scalar function $\Gamma(x)$ whose point values $\Gamma_j $ at $x = x_j$ are available. 
We present how to calculate the interpolated values of $\Gamma$ at $x = x_\jpf$ and the left-sided value at $x = x_\jph$, denoted by $\Gamma_\jpf$ and $\Gamma_j^{\rm E}$, respectively.
One can then obtain $\Gamma_\jmf$ using the mirror-symmetric stencil of $\Gamma_\jpf$; likewise, one can obtain $\Gamma_j^{\rm W}$ with the mirror-symmetric stencil of $\Gamma_j^{\rm E}$. 

For a fifth-order scheme, $\Gamma_j^{\rm E}$ and $\Gamma_\jpf$ are calculated using a weighted average of three parabolic interpolants $\mc P_{j,0}(x)$, $\mc P_{j,1}(x)$, and $\mc P_{j,2}(x)$ constructed on the substencils $[x_{j-2}, x_{j-1}, x_{j}]$, $[x_{j-1}, x_{j}, x_{j+1}]$, $[x_{j}, x_{j+1}, x_{j+2}]$, respectively:
\begin{equation}\label{e:weighted_avg}
	\Gamma(x) \approx \sum_{\ell = 0}^{2} \omega_{j,\ell} \mc P_{j,\ell}(x), \qquad x \in [x_\jmh, x_\jph].
\end{equation} 
To obtain the value $\Gamma_j^{\rm E}$, we evaluate \eqref{e:weighted_avg} at $x = x_\jph$; in this case, we have 
\begin{align*}
	{\mc P}_{j,0}(x_\jph) & = \frac{3}{8} \Gamma_{j - 2} - \frac{5}{4} \Gamma_{j - 1} + \frac{15}{8} \Gamma_{j}, \\
	{\mc P}_{j,1}(x_\jph) & = -\frac{1}{8} \Gamma_{j - 1} +\frac{3}{4} \Gamma_{j} + \frac{3}{8} \Gamma_{j+1}, \\
	{\mc P}_{j,2}(x_\jph) & = \frac{3}{8} \Gamma_{j} + \frac{3}{4} \Gamma_{j + 1} - \frac{1}{8} \Gamma_{j+2}.
\end{align*}
Likewise, we obtain the value $\Gamma_\jpf$ by evaluating \eqref{e:weighted_avg} at $x = x_\jpf$; in this case, the polynomial interpolant values read
\begin{align*}
	{\mc P}_{j,0}(x_\jpf) & = \frac{5}{32} \Gamma_{j - 2} - \frac{9}{16} \Gamma_{j - 1} + \frac{45}{32} \Gamma_{j}, \\
	{\mc P}_{j,1}(x_\jpf) & = -\frac{3}{32} \Gamma_{j - 1} +\frac{15}{16} \Gamma_{j} + \frac{5}{32} \Gamma_{j+1}, \\
	{\mc P}_{j,2}(x_\jpf) & = \frac{21}{32} \Gamma_{j} + \frac{7}{16} \Gamma_{j + 1} - \frac{3}{32} \Gamma_{j+2}.
\end{align*}

The nonlinear weights for $\Gamma_\jpf$ and $\Gamma_j^{\rm E}$ are computed nearly identically---except for different linear weights $d_\ell$; that is, 
\begin{equation}\label{eq:nl_weights}
	\omega_{j,\ell} = \frac{\alpha_{j,\ell}}{ \alpha_{j,0} + \alpha_{j,1} + \alpha_{j,2}}, \qquad
	\alpha_{j,\ell} = d_\ell \br{1 + \p{\frac{\tau_{5}}{\beta_{j,\ell} + \eps}}^p}, \qquad
	\ell = 0,1,2.
\end{equation}
Here, the linear weights for $\Gamma_j^{\rm E}$ read $d_0 = \frac{1}{16}$, $d_1 = \frac{5}{8}$, $d_2 = \frac{5}{16}$, and for $\Gamma_\jpf$ read $d_0 = \frac{7}{64}$, $d_1 = \frac{21}{32}$, $d_2 = \frac{15}{64}$.
The smoothness indicators $\beta_{j,\ell}$ for the corresponding polynomial interpolants ${\mc P}_{j,\ell}(x)$ are computed using 
\begin{align*}
	\beta_{j,0} & = \frac{13}{12} (\Gamma_{j - 2} - 2\Gamma_{j - 1} + \Gamma_{j} )^2 + \frac{1}{4}(\Gamma_{j - 2} -4 \Gamma_{j - 1}+ 3\Gamma_{j})^2, \\
	\beta_{j,1} & = \frac{13}{12} (\Gamma_{j - 1} - 2\Gamma_{j} + \Gamma_{j+1} )^2 + \frac{1}{4}(\Gamma_{j - 1} -\Gamma_{j+1})^2, \\
	\beta_{j,2} & = \frac{13}{12} (\Gamma_{j } - 2\Gamma_{j + 1} + \Gamma_{j+2} )^2 + \frac{1}{4}(3\Gamma_{j} -4 \Gamma_{j + 1} + \Gamma_{j+2})^2.
\end{align*}
Finally, $\tau_5 = \abs{\beta_{j,2}  - \beta_{j,0} }, $ and in this work, we take $p = 2$ and $\eps = 10^{-12}$ within \eqref{eq:nl_weights} for all numerical examples.


\bibliographystyle{abbrv}
\bibliography{biblio}

@article {Abgrall17HO,
	AUTHOR = {Abgrall, R.},
	TITLE = {High order schemes for hyperbolic problems using globally
	continuous approximation and avoiding mass matrices},
	JOURNAL = {J. Sci. Comput.},
	FJOURNAL = {Journal of Scientific Computing},
	VOLUME = {73},
	YEAR = {2017},
	NUMBER = {2-3},
	PAGES = {461--494},
	ISSN = {0885-7474,1573-7691},
	MRCLASS = {65M60 (65M22)},
	MRNUMBER = {3719595},
	MRREVIEWER = {Beny\ Neta},
	DOI = {10.1007/s10915-017-0498-4},
	URL = {https://doi-org.prox.lib.ncsu.edu/10.1007/s10915-017-0498-4},
}

@article {Caselles09Flux,
	AUTHOR = {Caselles, Vicent and Donat, Rosa and Haro, Gloria},
	TITLE = {Flux-gradient and source-term balancing for certain high
	resolution shock-capturing schemes},
	JOURNAL = {Comput. \& Fluids},
	FJOURNAL = {Computers \& Fluids. An International Journal},
	VOLUME = {38},
	YEAR = {2009},
	NUMBER = {1},
	PAGES = {16--36},
	ISSN = {0045-7930,1879-0747},
	MRCLASS = {65M06 (76M20)},
	MRNUMBER = {2645613},
	MRREVIEWER = {Laurent\ E.\ Gosse},
	DOI = {10.1016/j.compfluid.2007.07.023},
	URL = {https://doi-org.prox.lib.ncsu.edu/10.1016/j.compfluid.2007.07.023},
}

@article {Cheng19ME,
	AUTHOR = {Cheng, Yuanzhen and Chertock, Alina and Herty, Michael and
	Kurganov, Alexander and Wu, Tong},
	TITLE = {A new approach for designing moving-water equilibria
	preserving schemes for the shallow water equations},
	JOURNAL = {J. Sci. Comput.},
	FJOURNAL = {Journal of Scientific Computing},
	VOLUME = {80},
	YEAR = {2019},
	NUMBER = {1},
	PAGES = {538--554},
	ISSN = {0885-7474,1573-7691},
	MRCLASS = {65M08 (35L60 35L65 76M12 86A05)},
	MRNUMBER = {3954453},
	DOI = {10.1007/s10915-019-00947-w},
	URL = {https://doi-org.prox.lib.ncsu.edu/10.1007/s10915-019-00947-w},
}

@incollection {Chertock18WB,
	AUTHOR = {Chertock, Alina and Herty, Michael and \"Ozcan, \c Seyma Nur},
	TITLE = {Well-balanced central-upwind schemes for {$2\times2$} systems
	of balance laws},
	BOOKTITLE = {Theory, numerics and applications of hyperbolic problems. {I}},
	SERIES = {Springer Proc. Math. Stat.},
	VOLUME = {236},
	PAGES = {345--361},
	PUBLISHER = {Springer, Cham},
	YEAR = {2018},
	ISBN = {978-3-319-91545-6; 978-3-319-91544-9},
	MRCLASS = {65M08 (35L60)},
	MRNUMBER = {3828226},
	DOI = {10.1007/978-3-319-91545-6\_2},
	URL = {https://doi-org.prox.lib.ncsu.edu/10.1007/978-3-319-91545-6_2},
}

@article {Chertock22WB,
	AUTHOR = {Chertock, Alina and Kurganov, Alexander and Liu, Xin and Liu,
	Yongle and Wu, Tong},
	TITLE = {Well-balancing via flux globalization: applications to shallow
	water equations with wet/dry fronts},
	JOURNAL = {J. Sci. Comput.},
	FJOURNAL = {Journal of Scientific Computing},
	VOLUME = {90},
	YEAR = {2022},
	NUMBER = {1},
	PAGES = {Paper No. 9, 21},
	ISSN = {0885-7474,1573-7691},
	MRCLASS = {65M08 (35L65 76B15 76M12)},
	MRNUMBER = {4342672},
	MRREVIEWER = {Hans-Joachim\ Bungartz},
	DOI = {10.1007/s10915-021-01680-z},
	URL = {https://doi-org.prox.lib.ncsu.edu/10.1007/s10915-021-01680-z},
}

@misc{chu24local,
author = {Shaoshuai Chu and Alexander Kurganov and Mingye Na and Bao Shan Wang and Ruixiao Xin},
title = {Local {Characteristic} {Decomposition} of {Equilibrium} {Variables} for {Hyperbolic} {Systems} of {Balance} {Laws}},
year = {2026},
howpublished = {Preprint, {arXiv}:2412.19791 [math.{NA}] (2026)},
url = {https://arxiv.org/abs/2412.19791},
arXiv = {arXiv:2412.19791}
}

@misc{ziggaf2026arbitrary,
  title={Arbitrary order stationarity preserving stabilized finite elements for multidimensional nonlinear hyperbolic problems. Application to the {Euler} equations with gravity},
  author={Ziggaf, Moussa and Torlo, Davide and Ricchiuto, Mario},
  howpublished = {Preprint, {arXiv}:2603.23185 [math.{NA}] (2026)},
  url = {https://arxiv.org/abs/2603.23185},
  arXiv = {arXiv:2603.23185},
  year={2026}
}

@article {Chu25WB5,
	AUTHOR = {Chu, Shaoshuai and Kurganov, Alexander and Xin, Ruixiao},
	TITLE = {A well-balanced fifth-order {A}-{WENO} scheme based on flux
	globalization},
	JOURNAL = {Beijing J. Pure Appl. Math.},
	FJOURNAL = {Beijing Journal of Pure and Applied Mathematics},
	VOLUME = {2},
	YEAR = {2025},
	NUMBER = {1},
	PAGES = {87--113},
	ISSN = {2994-8444,2994-8452},
	MRCLASS = {99-06},
	MRNUMBER = {5083960},
	DOI = {10.4310/bpam.250414230324},
	URL = {https://doi-org.prox.lib.ncsu.edu/10.4310/bpam.250414230324},
}

@article {Chu25Efficient,
	AUTHOR = {Chu, Shaoshuai and Kurganov, Alexander and Xin, Ruixiao},
	TITLE = {New more efficient {A}-{WENO} schemes},
	JOURNAL = {J. Sci. Comput.},
	FJOURNAL = {Journal of Scientific Computing},
	VOLUME = {104},
	YEAR = {2025},
	NUMBER = {2},
	PAGES = {Paper No. 53, 24},
	ISSN = {0885-7474,1573-7691},
	MRCLASS = {65M06 (35L65 65M08)},
	MRNUMBER = {4923663},
	DOI = {10.1007/s10915-025-02969-z},
	URL = {https://doi-org.prox.lib.ncsu.edu/10.1007/s10915-025-02969-z},
}

@article {Chu22Fifth,
	AUTHOR = {Chu, Shaoshuai and Kurganov, Alexander and Na, Mingye},
	TITLE = {Fifth-order {A}-{WENO} schemes based on the path-conservative
	central-upwind method},
	JOURNAL = {J. Comput. Phys.},
	FJOURNAL = {Journal of Computational Physics},
	VOLUME = {469},
	YEAR = {2022},
	PAGES = {Paper No. 111508, 22},
	ISSN = {0021-9991,1090-2716},
	MRCLASS = {65M08 (76-10)},
	MRNUMBER = {4471537},
	DOI = {10.1016/j.jcp.2022.111508},
	URL = {https://doi-org.prox.lib.ncsu.edu/10.1016/j.jcp.2022.111508},
}

@article {Ciallella23WB,
	AUTHOR = {Ciallella, Mirco and Torlo, Davide and Ricchiuto, Mario},
	TITLE = {Arbitrary high order {WENO} finite volume scheme with flux
	globalization for moving equilibria preservation},
	JOURNAL = {J. Sci. Comput.},
	FJOURNAL = {Journal of Scientific Computing},
	VOLUME = {96},
	YEAR = {2023},
	NUMBER = {2},
	PAGES = {Paper No. 53, 28},
	ISSN = {0885-7474,1573-7691},
	MRCLASS = {65M08},
	MRNUMBER = {4610951},
	DOI = {10.1007/s10915-023-02280-9},
	URL = {https://doi-org.prox.lib.ncsu.edu/10.1007/s10915-023-02280-9},
}

@article {Don22SI_WENO,
	AUTHOR = {Don, Wai Sun and Li, Run and Wang, Bao-Shan and Wang, Yinghua},
	TITLE = {A novel and robust scale-invariant {WENO} scheme for
	hyperbolic conservation laws},
	JOURNAL = {J. Comput. Phys.},
	FJOURNAL = {Journal of Computational Physics},
	VOLUME = {448},
	YEAR = {2022},
	PAGES = {Paper No. 110724, 23},
	ISSN = {0021-9991,1090-2716},
	MRCLASS = {65M08 (76M60)},
	MRNUMBER = {4319348},
	DOI = {10.1016/j.jcp.2021.110724},
	URL = {https://doi-org.prox.lib.ncsu.edu/10.1016/j.jcp.2021.110724},
}

@article {Donat11Hybrid,
	AUTHOR = {Donat, R. and Martinez-Gavara, A.},
	TITLE = {Hybrid second order schemes for scalar balance laws},
	JOURNAL = {J. Sci. Comput.},
	FJOURNAL = {Journal of Scientific Computing},
	VOLUME = {48},
	YEAR = {2011},
	NUMBER = {1-3},
	PAGES = {52--69},
	ISSN = {0885-7474,1573-7691},
	MRCLASS = {65M06 (65M12 76M20)},
	MRNUMBER = {2811690},
	MRREVIEWER = {Tore\ Fl\aa tten},
	DOI = {10.1007/s10915-010-9404-z},
	URL = {https://doi-org.prox.lib.ncsu.edu/10.1007/s10915-010-9404-z},
}

@article {Gascon01Const,
	AUTHOR = {Gasc\'on, Ll. and Corber\'an, J. M.},
	TITLE = {Construction of second-order {TVD} schemes for nonhomogeneous
	hyperbolic conservation laws},
	JOURNAL = {J. Comput. Phys.},
	FJOURNAL = {Journal of Computational Physics},
	VOLUME = {172},
	YEAR = {2001},
	NUMBER = {1},
	PAGES = {261--297},
	ISSN = {0021-9991,1090-2716},
	MRCLASS = {65M06 (35L65)},
	MRNUMBER = {1852329},
	MRREVIEWER = {Mohammad\ Asadzadeh},
	DOI = {10.1006/jcph.2001.6823},
	URL = {https://doi.org/10.1006/jcph.2001.6823},
}

@article {Guerrero22WB,
	AUTHOR = {Guerrero Fern\'andez, E. and Castro D\'iaz, M. J. and Dumbser,
	M. and Morales de Luna, T.},
	TITLE = {An arbitrary high order well-balanced {ADER}-{DG} numerical
	scheme for the multilayer shallow-water model with variable
	density},
	JOURNAL = {J. Sci. Comput.},
	FJOURNAL = {Journal of Scientific Computing},
	VOLUME = {90},
	YEAR = {2022},
	NUMBER = {1},
	PAGES = {Paper No. 52, 38},
	ISSN = {0885-7474,1573-7691},
	MRCLASS = {65M60 (76B15 76B70)},
	MRNUMBER = {4353498},
	MRREVIEWER = {Zhicheng\ Hu},
	DOI = {10.1007/s10915-021-01734-2},
	URL = {https://doi-org.prox.lib.ncsu.edu/10.1007/s10915-021-01734-2},
}

@article {Huang19PP,
	AUTHOR = {Huang, Juntao and Shu, Chi-Wang},
	TITLE = {Positivity-preserving time discretizations for
	production-destruction equations with applications to
	non-equilibrium flows},
	JOURNAL = {J. Sci. Comput.},
	FJOURNAL = {Journal of Scientific Computing},
	VOLUME = {78},
	YEAR = {2019},
	NUMBER = {3},
	PAGES = {1811--1839},
	ISSN = {0885-7474,1573-7691},
	MRCLASS = {65M06 (65L06 65M20 76V05)},
	MRNUMBER = {3934688},
	MRREVIEWER = {Qifeng\ Zhang},
	DOI = {10.1007/s10915-018-0852-1},
	URL = {https://doi-org.prox.lib.ncsu.edu/10.1007/s10915-018-0852-1},
}

@article {Huang19third,
	AUTHOR = {Huang, Juntao and Zhao, Weifeng and Shu, Chi-Wang},
	TITLE = {A third-order unconditionally positivity-preserving scheme for
	production-destruction equations with applications to
	non-equilibrium flows},
	JOURNAL = {J. Sci. Comput.},
	FJOURNAL = {Journal of Scientific Computing},
	VOLUME = {79},
	YEAR = {2019},
	NUMBER = {2},
	PAGES = {1015--1056},
	ISSN = {0885-7474,1573-7691},
	MRCLASS = {65M06 (76Nxx 76V05)},
	MRNUMBER = {3969000},
	DOI = {10.1007/s10915-018-0881-9},
	URL = {https://doi-org.prox.lib.ncsu.edu/10.1007/s10915-018-0881-9},
}

@article {Kurganov07second,
	AUTHOR = {Kurganov, Alexander and Petrova, Guergana},
	TITLE = {A second-order well-balanced positivity preserving
	central-upwind scheme for the {S}aint-{V}enant system},
	JOURNAL = {Commun. Math. Sci.},
	FJOURNAL = {Communications in Mathematical Sciences},
	VOLUME = {5},
	YEAR = {2007},
	NUMBER = {1},
	PAGES = {133--160},
	ISSN = {1539-6746,1945-0796},
	MRCLASS = {76B15 (35L60 35Q53 65M06 76M25)},
	MRNUMBER = {2310637},
	MRREVIEWER = {Henrik\ Kalisch},
	DOI = {10.4310/cms.2007.v5.n1.a6},
	URL = {https://doi-org.prox.lib.ncsu.edu/10.4310/cms.2007.v5.n1.a6},
}

@article {Kurganov23WBPCCU,
	AUTHOR = {Kurganov, Alexander and Liu, Yongle and Xin, Ruixiao},
	TITLE = {Well-balanced path-conservative central-upwind schemes based
	on flux globalization},
	JOURNAL = {J. Comput. Phys.},
	FJOURNAL = {Journal of Computational Physics},
	VOLUME = {474},
	YEAR = {2023},
	PAGES = {Paper No. 111773, 32},
	ISSN = {0021-9991,1090-2716},
	MRCLASS = {65M08 (35L02)},
	MRNUMBER = {4513801},
	MRREVIEWER = {Panters\ Rodr\'iguez-Berm\'udez},
	DOI = {10.1016/j.jcp.2022.111773},
	URL = {https://doi-org.prox.lib.ncsu.edu/10.1016/j.jcp.2022.111773},
}

@article {Li23SI_WENO,
	AUTHOR = {Li, Peng and Li, Tingting and Don, Wai-Sun and Wang, Bao-Shan},
	TITLE = {Scale-invariant multi-resolution alternative {WENO} scheme for
	the {E}uler equations},
	JOURNAL = {J. Sci. Comput.},
	FJOURNAL = {Journal of Scientific Computing},
	VOLUME = {94},
	YEAR = {2023},
	NUMBER = {1},
	PAGES = {Paper No. 15, 32},
	ISSN = {0885-7474,1573-7691},
	MRCLASS = {65M06 (35L65 35Q31 65M12 65M15)},
	MRNUMBER = {4519622},
	MRREVIEWER = {Guanghua\ Ji},
	DOI = {10.1007/s10915-022-02065-6},
	URL = {https://doi-org.prox.lib.ncsu.edu/10.1007/s10915-022-02065-6},
}

@article {Martinez11Hybrid,
	AUTHOR = {Martinez-Gavara, A. and Donat, R.},
	TITLE = {A hybrid second order scheme for shallow water flows},
	JOURNAL = {J. Sci. Comput.},
	FJOURNAL = {Journal of Scientific Computing},
	VOLUME = {48},
	YEAR = {2011},
	NUMBER = {1-3},
	PAGES = {241--257},
	ISSN = {0885-7474,1573-7691},
	MRCLASS = {65M06 (76B15 76M20)},
	MRNUMBER = {2811701},
	MRREVIEWER = {Prabir\ K.\ Daripa},
	DOI = {10.1007/s10915-010-9440-8},
	URL = {https://doi.org/10.1007/s10915-010-9440-8},
}

@article {Meister14PP,
	AUTHOR = {Meister, A. and Ortleb, S.},
	TITLE = {On unconditionally positive implicit time integration for the
	{DG} scheme applied to shallow water flows},
	JOURNAL = {Internat. J. Numer. Methods Fluids},
	FJOURNAL = {International Journal for Numerical Methods in Fluids},
	VOLUME = {76},
	YEAR = {2014},
	NUMBER = {2},
	PAGES = {69--94},
	ISSN = {0271-2091,1097-0363},
	MRCLASS = {65M60 (65M12 86A05)},
	MRNUMBER = {3247707},
	MRREVIEWER = {Juan\ Antonio\ Infante},
	DOI = {10.1002/fld.3921},
	URL = {https://doi-org.prox.lib.ncsu.edu/10.1002/fld.3921},
}

@article {Nessyahu90NonOsc,
	AUTHOR = {Nessyahu, Haim and Tadmor, Eitan},
	TITLE = {Nonoscillatory central differencing for hyperbolic
	conservation laws},
	JOURNAL = {J. Comput. Phys.},
	FJOURNAL = {Journal of Computational Physics},
	VOLUME = {87},
	YEAR = {1990},
	NUMBER = {2},
	PAGES = {408--463},
	ISSN = {0021-9991,1090-2716},
	MRCLASS = {65M12 (35L65 76L05 76M20)},
	MRNUMBER = {1047564},
	MRREVIEWER = {Anders\ Szepessy},
	DOI = {10.1016/0021-9991(90)90260-8},
	URL = {https://doi-org.prox.lib.ncsu.edu/10.1016/0021-9991(90)90260-8},
}

@article {Noelle07HOWB,
	AUTHOR = {Noelle, Sebastian and Xing, Yulong and Shu, Chi-Wang},
	TITLE = {High-order well-balanced finite volume {WENO} schemes for
	shallow water equation with moving water},
	JOURNAL = {J. Comput. Phys.},
	FJOURNAL = {Journal of Computational Physics},
	VOLUME = {226},
	YEAR = {2007},
	NUMBER = {1},
	PAGES = {29--58},
	ISSN = {0021-9991,1090-2716},
	MRCLASS = {76M12 (65M60 76B15)},
	MRNUMBER = {2356351},
	DOI = {10.1016/j.jcp.2007.03.031},
	URL = {https://doi-org.prox.lib.ncsu.edu/10.1016/j.jcp.2007.03.031},
}

@book{Patankar80Numerical,
  author    = {Patankar, Suhas V.},
  title     = {Numerical Heat Transfer and Fluid Flow},
  publisher = {Hemisphere Publishing Corporation},
  year      = {1980},
  series    = {Series in Computational Methods in Mechanics and Thermal Sciences},
  address   = {Washington, D.C.},
  isbn      = {9780891165224}
}

@article {Sweby84High,
	AUTHOR = {Sweby, P. K.},
	TITLE = {High resolution schemes using flux limiters for hyperbolic
	conservation laws},
	JOURNAL = {SIAM J. Numer. Anal.},
	FJOURNAL = {SIAM Journal on Numerical Analysis},
	VOLUME = {21},
	YEAR = {1984},
	NUMBER = {5},
	PAGES = {995--1011},
	ISSN = {0036-1429},
	MRCLASS = {65M05},
	MRNUMBER = {760628},
	MRREVIEWER = {Randall\ J.\ LeVeque},
	DOI = {10.1137/0721062},
	URL = {https://doi-org.prox.lib.ncsu.edu/10.1137/0721062},
}

@article {Vazquez99Improved,
	AUTHOR = {V\'azquez-Cend\'on, Mar\'ia Elena},
	TITLE = {Improved treatment of source terms in upwind schemes for the
	shallow water equations in channels with irregular geometry},
	JOURNAL = {J. Comput. Phys.},
	FJOURNAL = {Journal of Computational Physics},
	VOLUME = {148},
	YEAR = {1999},
	NUMBER = {2},
	PAGES = {497--526},
	ISSN = {0021-9991,1090-2716},
	MRCLASS = {76M20 (86-08)},
	MRNUMBER = {1669644},
	DOI = {10.1006/jcph.1998.6127},
	URL = {https://doi-org.prox.lib.ncsu.edu/10.1006/jcph.1998.6127},
}

@article {Wang22AI_WENO,
	AUTHOR = {Wang, Bao-Shan and Don, Wai Sun},
	TITLE = {Affine-invariant {WENO} weights and operator},
	JOURNAL = {Appl. Numer. Math.},
	FJOURNAL = {Applied Numerical Mathematics. An IMACS Journal},
	VOLUME = {181},
	YEAR = {2022},
	PAGES = {630--646},
	ISSN = {0168-9274,1873-5460},
	MRCLASS = {65M06 (35L65 65M15)},
	MRNUMBER = {4457761},
	DOI = {10.1016/j.apnum.2022.07.007},
	URL = {https://doi-org.prox.lib.ncsu.edu/10.1016/j.apnum.2022.07.007},
}

@article {Xing05HO,
	AUTHOR = {Xing, Yulong and Shu, Chi-Wang},
	TITLE = {High order finite difference {WENO} schemes with the exact
	conservation property for the shallow water equations},
	JOURNAL = {J. Comput. Phys.},
	FJOURNAL = {Journal of Computational Physics},
	VOLUME = {208},
	YEAR = {2005},
	NUMBER = {1},
	PAGES = {206--227},
	ISSN = {0021-9991,1090-2716},
	MRCLASS = {76M20 (65M06 76B15)},
	MRNUMBER = {2144699},
	DOI = {10.1016/j.jcp.2005.02.006},
	URL = {https://doi-org.prox.lib.ncsu.edu/10.1016/j.jcp.2005.02.006},
}

@article {Xing13WB,
	AUTHOR = {Xing, Yulong and Zhang, Xiangxiong},
	TITLE = {Positivity-preserving well-balanced discontinuous {G}alerkin
	methods for the shallow water equations on unstructured
	triangular meshes},
	JOURNAL = {J. Sci. Comput.},
	FJOURNAL = {Journal of Scientific Computing},
	VOLUME = {57},
	YEAR = {2013},
	NUMBER = {1},
	PAGES = {19--41},
	ISSN = {0885-7474,1573-7691},
	MRCLASS = {65M60 (76B15 86A05)},
	MRNUMBER = {3095289},
	MRREVIEWER = {B\"ulent\ Karas\"ozen},
	DOI = {10.1007/s10915-013-9695-y},
	URL = {https://doi-org.prox.lib.ncsu.edu/10.1007/s10915-013-9695-y},
}

@article{ciallella2025,
 author = {Ciallella, Mirco and Micalizzi, Lorenzo and Michel-Dansac, Victor and {\"O}ffner, Philipp and Torlo, Davide},
 title = {A high-order, fully well-balanced, unconditionally positivity-preserving finite volume framework for flood simulations},
 fjournal = {GEM - International Journal on Geomathematics},
 journal = {GEM. Int. J. Geomath.},
 issn = {1869-2672},
 volume = {16},
 pages = {33},
 note = {Id/No 6},
 year = {2025},
 language = {English},
 doi = {10.1007/s13137-025-00262-7},
 zbMATH = {8014827},
 Zbl = {1557.65161}
}

@article{veiga2021,
 author = {Han Veiga, Maria and {\"O}ffner, Philipp and Torlo, Davide},
 title = {{DeC} and {ADER}: similarities, differences and a unified framework},
 fjournal = {Journal of Scientific Computing},
 journal = {J. Sci. Comput.},
 issn = {0885-7474},
 volume = {87},
 number = {1},
 pages = {35},
 note = {Id/No 2},
 year = {2021},
 language = {English},
 doi = {10.1007/s10915-020-01397-5},
 zbMATH = {7316876},
 Zbl = {1459.76086}
}

@article{torlo2020,
 author = {{\"O}ffner, Philipp and Torlo, Davide},
 title = {Arbitrary high-order, conservative and positivity preserving {Patankar}-type deferred correction schemes},
 fjournal = {Applied Numerical Mathematics},
 journal = {Appl. Numer. Math.},
 issn = {0168-9274},
 volume = {153},
 pages = {15--34},
 year = {2020},
 language = {English},
 doi = {10.1016/j.apnum.2020.01.025},
 zbMATH = {7188131},
 Zbl = {1437.65073}
}

@article{ong2020,
 author = {Ong, Benjamin W. and Spiteri, Raymond J.},
 title = {Deferred correction methods for ordinary differential equations},
 fjournal = {Journal of Scientific Computing},
 journal = {J. Sci. Comput.},
 issn = {0885-7474},
 volume = {83},
 number = {3},
 pages = {29},
 note = {Id/No 60},
 year = {2020},
 language = {English},
 doi = {10.1007/s10915-020-01235-8},
 zbMATH = {7212386},
 Zbl = {1442.65124}
}

@article{dutt2000,
 author = {Dutt, Alok and Greengard, Leslie and Rokhlin, Vladimir},
 title = {Spectral deferred correction methods for ordinary differential equations},
 fjournal = {BIT},
 journal = {BIT},
 issn = {0006-3835},
 volume = {40},
 number = {2},
 pages = {241--266},
 year = {2000},
 language = {English},
 doi = {10.1023/A:1022338906936},
 zbMATH = {1496092},
 Zbl = {0959.65084}
}

@misc{arXiv:2602.14392,
 author = {Thomas Izgin and Andreas Meister and Chi-Wang Shu and Davide Torlo},
 title = {Flux-{Balanced} {Patankar}-type {Schemes} for the {Compressible} {Euler} {Equations}},
 year = {2026},
 howpublished = {Preprint, {arXiv}:2602.14392 [math.{NA}] (2026)},
 url = {https://arxiv.org/abs/2602.14392},
 arXiv = {arXiv:2602.14392}
 }

@article{meister2019,
 author = {Kopecz, Stefan and Meister, Andreas},
 title = {On the existence of three-stage third-order modified {Patankar}-{Runge}-{Kutta} schemes},
 fjournal = {Numerical Algorithms},
 journal = {Numer. Algorithms},
 issn = {1017-1398},
 volume = {81},
 number = {4},
 pages = {1473--1484},
 year = {2019},
 language = {English},
 doi = {10.1007/s11075-019-00680-3},
 zbMATH = {7089089},
 Zbl = {1416.65205}
}

@article{burchard2003,
 author = {Burchard, Hans and Deleersnijder, Eric and Meister, Andreas},
 title = {A high-order conservative {Patankar}-type discretisation for stiff systems of production--destruction equations},
 fjournal = {Applied Numerical Mathematics},
 journal = {Appl. Numer. Math.},
 issn = {0168-9274},
 volume = {47},
 number = {1},
 pages = {1--30},
 year = {2003},
 language = {English},
 doi = {10.1016/S0168-9274(03)00101-6},
 zbMATH = {1992879},
 Zbl = {1028.80008}
}

@article{ciallella2022,
 author = {Ciallella, M. and Micalizzi, L. and {\"O}ffner, P. and Torlo, D.},
 title = {An arbitrary high order and positivity preserving method for the shallow water equations},
 fjournal = {Computers and Fluids},
 journal = {Comput. Fluids},
 issn = {0045-7930},
 volume = {247},
 pages = {21},
 note = {Id/No 105630},
 year = {2022},
 language = {English},
 doi = {10.1016/j.compfluid.2022.105630},
 zbMATH = {7602522},
 Zbl = {1521.76408}
}

@article{kurganov2018finite,
author = {Kurganov, Alexander},
title = {Finite-volume schemes for shallow-water equations},
fjournal = {Acta Numerica},
journal = {Acta Numerica},
issn = {0962-4929},
volume = {27},
pages = {289--351},
year = {2018},
language = {English},
doi = {10.1017/S0962492918000028},
zbMATH = {7099157},
Zbl = {1430.76372}
}

@article {michel2017well,
AUTHOR = {Michel-Dansac, Victor and Berthon, Christophe and Clain,
St\'ephane and Foucher, Fran\c coise},
TITLE = {A well-balanced scheme for the shallow-water equations with
topography or {M}anning friction},
JOURNAL = {J. Comput. Phys.},
FJOURNAL = {Journal of Computational Physics},
VOLUME = {335},
YEAR = {2017},
PAGES = {115--154},
ISSN = {0021-9991,1090-2716},
MRCLASS = {65M08 (76B15 76M12)},
MRNUMBER = {3612493},
MRREVIEWER = {Pilar\ Garcia-Navarro},
DOI = {10.1016/j.jcp.2017.01.009},
URL = {https://doi-org.prox.lib.ncsu.edu/10.1016/j.jcp.2017.01.009},
}

@article{berthon2022very,
AUTHOR = {Berthon, Christophe and Bulteau, Sol\`ene and Foucher, Fran\c
coise and M'Baye, Meissa and Michel-Dansac, Victor},
TITLE = {A very easy high-order well-balanced reconstruction for
hyperbolic systems with source terms},
JOURNAL = {SIAM J. Sci. Comput.},
FJOURNAL = {SIAM Journal on Scientific Computing},
VOLUME = {44},
YEAR = {2022},
NUMBER = {4},
PAGES = {A2506--A2535},
ISSN = {1064-8275,1095-7197},
MRCLASS = {65M08 (65M12)},
MRNUMBER = {4469516},
MRREVIEWER = {Junming\ Duan},
DOI = {10.1137/21M1429230},
URL = {https://doi-org.prox.lib.ncsu.edu/10.1137/21M1429230},
}

@Article{mantri2024fully,
 Author = {Mantri, Yogiraj and {\"O}ffner, Philipp and Ricchiuto, Mario},
 Title = {Fully well-balanced entropy controlled discontinuous {Galerkin} spectral element method for shallow water flows: global flux quadrature and cell entropy correction},
 FJournal = {Journal of Computational Physics},
 Journal = {J. Comput. Phys.},
 ISSN = {0021-9991},
 Volume = {498},
 Pages = {35},
 Note = {Id/No 112673},
 Year = {2024},
 Language = {English},
 DOI = {10.1016/j.jcp.2023.112673},
 zbMATH = {7797652}
}

@article{ranocha2017shallow,
author = {Ranocha, Hendrik},
title = {Shallow water equations: split-form, entropy stable, well-balanced, and positivity preserving numerical methods},
fjournal = {GEM - International Journal on Geomathematics},
journal = {GEM. Int. J. Geomath.},
issn = {1869-2672},
volume = {8},
number = {1},
pages = {85--133},
year = {2017},
language = {English},
doi = {10.1007/s13137-016-0089-9},
zbMATH = {6853862},
Zbl = {1432.65156}
}

@article {castro-pares-jsc20,
AUTHOR = {Castro, Manuel J. and Par\'es, Carlos},
TITLE = {Well-balanced high-order finite volume methods for systems of
balance laws},
JOURNAL = {J. Sci. Comput.},
FJOURNAL = {Journal of Scientific Computing},
VOLUME = {82},
YEAR = {2020},
NUMBER = {2},
PAGES = {Paper No. 48, 48},
ISSN = {0885-7474,1573-7691},
MRCLASS = {65M08},
MRNUMBER = {4062845},
MRREVIEWER = {G\"unter\ Karl Franz B\"arwolff},
DOI = {10.1007/s10915-020-01149-5},
URL = {https://doi-org.prox.lib.ncsu.edu/10.1007/s10915-020-01149-5},
}

@article{zhang2011maximum,
  title={Maximum-principle-satisfying and positivity-preserving high-order schemes for conservation laws: survey and new developments},
  author={Zhang, Xiangxiong and Shu, Chi-Wang},
  journal={Proceedings of the Royal Society A: Mathematical, Physical and Engineering Sciences},
  volume={467},
  number={2134},
  pages={2752--2776},
  year={2011},
  publisher={The Royal Society Publishing}
}

@article{perthame1996positivity,
author = {Perthame, Benoit and Shu, Chi-Wang},
title = {On positivity preserving finite volume schemes for {Euler} equations},
fjournal = {Numerische Mathematik},
journal = {Numer. Math.},
issn = {0029-599X},
volume = {73},
number = {1},
pages = {119--130},
year = {1996},
language = {English},
doi = {10.1007/s002110050187},
zbMATH = {932056},
Zbl = {0857.76062}
}

@article {gottlieb2001strong,
AUTHOR = {Gottlieb, Sigal and Shu, Chi-Wang and Tadmor, Eitan},
TITLE = {Strong stability-preserving high-order time discretization
methods},
JOURNAL = {SIAM Rev.},
FJOURNAL = {SIAM Review},
VOLUME = {43},
YEAR = {2001},
NUMBER = {1},
PAGES = {89--112},
ISSN = {1095-7200,0036-1445},
MRCLASS = {65M20 (65L06)},
MRNUMBER = {1854647},
MRREVIEWER = {Thomas\ H.\ Sonar},
DOI = {10.1137/S003614450036757X},
URL = {https://doi-org.prox.lib.ncsu.edu/10.1137/S003614450036757X},
}

@incollection {shu1998essentially,
AUTHOR = {Shu, Chi-Wang},
TITLE = {Essentially non-oscillatory and weighted essentially
non-oscillatory schemes for hyperbolic conservation laws},
BOOKTITLE = {Advanced numerical approximation of nonlinear hyperbolic
equations ({C}etraro, 1997)},
SERIES = {Lecture Notes in Math.},
VOLUME = {1697},
PAGES = {325--432},
PUBLISHER = {Springer, Berlin},
YEAR = {1998},
ISBN = {3-540-64977-8},
MRCLASS = {65Mxx (65-02 76M20)},
MRNUMBER = {1728856},
MRREVIEWER = {Shi\ Jin},
DOI = {10.1007/BFb0096355},
URL = {https://doi-org.prox.lib.ncsu.edu/10.1007/BFb0096355},
}

@misc{hennes2022,
 author = {Hennes Hajduk and Dmitri Kuzmin},
 title = {Bound-preserving and entropy-stable algebraic flux correction schemes for the shallow water equations with topography},
 year = {2022},
 howpublished = {Preprint, {arXiv}:2207.07261 [math.{NA}] (2022)},
 url = {https://arxiv.org/abs/2207.07261},
 arXiv = {arXiv:2207.07261}
}

@incollection{abgrall_2022,
 author = {Abgrall, R{\'e}mi and Ricchiuto, Mario},
 title = {Hyperbolic balance laws: residual distribution, local and global fluxes},
 booktitle = {Numerical fluid dynamics. Methods and computations},
 isbn = {978-981-16-9664-0; 978-981-16-9665-7},
 pages = {177--222},
 year = {2022},
 publisher = {Singapore: Springer},
 language = {English},
 doi = {10.1007/978-981-16-9665-7_7},
 zbMATH = {7615423},
 Zbl = {1513.76096}
}

@article{mario1,
 author = {Ricchiuto, Mario},
 title = {An explicit residual based approach for shallow water flows},
 fjournal = {Journal of Computational Physics},
 journal = {J. Comput. Phys.},
 issn = {0021-9991},
 volume = {280},
 pages = {306--344},
 year = {2015},
 language = {English},
 doi = {10.1016/j.jcp.2014.09.027},
 zbMATH = {6660477},
 Zbl = {1349.76035}
}

@article{barsukow2024structure,
  title={{Structure preserving nodal continuous Finite Elements via Global Flux quadrature}},
  author={Barsukow, Wasilij and Ricchiuto, Mario and Torlo, Davide},
  fjournal={Numerical Methods for Partial Differential Equations},
  journal={Numer. Methods Partial Differ. Equ.},
  volume={41},
  number={1},
  pages = {e23167},
  doi = {https://doi.org/10.1002/num.23167},
  year={2025}
}

@article{barsukow2026genuinely,
title = {Genuinely multi-dimensional stationarity preserving Finite Volume formulation for nonlinear hyperbolic PDEs},
fjournal = {Journal of Computational Physics},
journal={J. Comput. Phys.},
volume = {550},
pages = {114633},
year = {2026},
issn = {0021-9991},
doi = {https://doi.org/10.1016/j.jcp.2025.114633},
url = {https://www.sciencedirect.com/science/article/pii/S0021999125009143},
author = {Wasilij Barsukow and Mirco Ciallella and Mario Ricchiuto and Davide Torlo}
}

@article{barsukow2026stationarity,
  title={Stationarity preserving nodal Finite Element methods for multi-dimensional linear hyperbolic balance laws via a Global Flux quadrature formulation},
  author={Barsukow, Wasilij and Ricchiuto, Mario and Torlo, Davide},
  fjournal={Journal of Scientific Computing},
  journal={J. Sci. Comput.},
  volume={109},
  number={1},
  pages={8},
  year={2026},
  publisher={Springer},
  doi={10.1007/s10915-026-03392-8}
}

@phdthesis{micalizzi2024efficient,
  title={Efficient Iterative Arbitrary High Order Methods: Adaptivity and Structure Preservation},
  author={Micalizzi, L},
  year={2024},
  school={University of Zurich}
}

@article{bender2025,
  title={{The Lax--Wendroff theorem for Patankar-type methods applied to hyperbolic conservation laws}},
  author={Bender, Janina and Izgin, Thomas and {\"O}ffner, Philipp and Torlo, Davide},
	JOURNAL = {Comput. \& Fluids},
	FJOURNAL = {Computers \& Fluids. An International Journal},
  pages={106885},
  year={2025},
  publisher={Elsevier},
  doi={10.1016/j.compfluid.2025.106885}

}

\end{document}